\numberwithin{equation}{section}
\newtheorem{thm}{Theorem}[section]
\newtheorem{prop}[thm]{Proposition}
\newtheorem{lemm}[thm]{Lemma}
\newtheorem{cor}[thm]{Corollary}
\theoremstyle{remark}
\newtheorem{rem}{Remark}[section]
\newtheorem{defn}{Definition}
\newcommand{\BBB}{\mathbb}
\newcommand{\R}{{\BBB R}}
\newcommand{\Z}{{\BBB Z}}
\newcommand{\N}{{\BBB N}}
\newcommand{\C}{{\BBB C}}
\newcommand{\s}{\mathcal{S}}
\newcommand{\ee}{\mbox{\boldmath $1$}}
\newcommand{\LR}[1]{{\langle {#1} \rangle }}
\newcommand{\cross}{\times}
\newcommand{\al}{\alpha}
\newcommand{\be}{\beta}
\newcommand{\ga}{\gamma}
\newcommand{\eps}{\varepsilon}
\newcommand{\ta}{\tau}
\newcommand{\p}{\partial}
\newcommand{\supp}{\operatorname{supp}}
\newcommand{\kuuhaku}{\text{}}
\newcommand{\F}{\mathcal{F}}
\newcommand{\h}{\mathcal{H}}
\newcommand{\It}{\mathcal{I}}
\newcommand{\dx}{\partial_x}
\newcommand{\II}{I\hspace{-0.03cm}I}
\DeclareMathOperator*{\dist}{dist}
\title[Well-posedness for a system of qDNLS]{Well-posedness 
for a system of quadratic derivative nonlinear Schr\"odinger 
equations in almost critical spaces
}
\author[H. Hirayama]{Hiroyuki Hirayama}
\address[H. Hirayama]{Institute for Tenure Track Promotion, University of Miyazaki, 1-1, Gakuenkibanadai-nishi, Miyazaki, 889-2192 Japan}
\email[H. Hirayama]{h.hirayama@cc.miyazaki-u.ac.jp}
\author[S. Kinoshita]{Shinya Kinoshita}
\address[S. Kinoshita]{Universit\"at Bielefeld, 
Fakult\"at f\"ur Mathematik, Postfach 10 01 31 33501, Bielefeld, Germany
}
\email[S. Kinoshita]{kinoshita@math.uni-bielefeld.de}
\author[M. Okamoto]{Mamoru Okamoto}
\address[M. Okamoto]{Department of Mathematics,
Graduate School of Science, Osaka University,
Toyonaka, Osaka 560-0043, Japan}
\email[M. Okamoto]{okamoto@math.sci.osaka-u.ac.jp}
\keywords{Schr\"odinger equation; Well-posedness; Cauchy problem; Bilinear estimate}
\begin{document}
%\linenumbers
%
%
\begin{abstract}
In this paper,
we consider the Cauchy problem 
of the system of quadratic derivative nonlinear 
Schr\"odinger equations introduced by Colin and Colin (2004). 
%In the previous paper,
%the first and second authors proved the 
%well-posedness of this system in the Sobolev space, 
%which are optimal except the several cases. 
%We treat non-optimal cases, and 
%prove well-posedness or non-smoothness of the flow map.
We determine an almost optimal Sobolev regularity
where the smooth flow map of the Cauchy problem exists,
expect for the scaling critical case.
This result covers a gap left open in papers of the first and second authors (2014, 2019). 
\end{abstract}
\maketitle
%\tableofcontents
\setcounter{page}{001}

%%%%%%%%%%%%%%%%%%%%%%%%%%%%%%%%%%%%%%%%%%%%%%%%%%%%%%%%%%%%%%%%%%%%%%%%%%%%%%%%%
%%%%%%%%%%%%%%%%%%%%%%%%%%%%%%%%%%%%%%%%%%%%%%%%%%%%%%%%%%%%%%%%%%%%%%%%%%%%%%%%%
%%%%%%%%%%%%%%%%%%%%%%%%%%%%%%%%%%%%  Section 1   %%%%%%%%%%%%%%%%%%%%%%%%%%%%%%%%%%
%%%%%%%%%%%%%%%%%%%%%%%%%%%%%%%%%%%%%%%%%%%%%%%%%%%%%%%%%%%%%%%%%%%%%%%%%%%%%%%%%
%%%%%%%%%%%%%%%%%%%%%%%%%%%%%%%%%%%%%%%%%%%%%%%%%%%%%%%%%%%%%%%%%%%%%%%%%%%%%%%%%

\section{Introduction\label{intro}}
We consider the Cauchy problem of the system of nonlinear Schr\"odinger equations:
\begin{equation}\label{NLS_sys}
\begin{cases}
\displaystyle (i\p_{t}+\alpha \Delta )u=-(\nabla \cdot w )v,
\ \ t>0,\ x\in \R^d,\\
\displaystyle (i\p_{t}+\beta \Delta )v=-(\nabla \cdot \overline{w})u,
\ \ t>0,\ x\in \R^d,\\
\displaystyle (i\p_{t}+\gamma \Delta )w =\nabla (u\cdot \overline{v}),
\ \ t>0,\ x\in \R^d,\\
(u, v, w)|_{t=0}=(u_{0}, v_{0}, w_{0})\in \h^s(\R^d),
\end{cases}
\end{equation}
where $\al$, $\be$, $\ga\in \R\backslash \{0\}$ 
, the unknown functions $u$, $v$, $w$ are $d$-dimensional complex vector valued.
Moreover, $H^s(\R^d)$ denotes the $L^2$-based Sobolev space,
and we set
\[
\h^s(\R^d) :=(H^s(\R^d))^d\times (H^s(\R^d))^d\times (H^s(\R^d))^d.
\]
Our aim in this paper is to determine regularities where the smooth flow map of \eqref{NLS_sys} exists.

The system (\ref{NLS_sys}) was introduced by Colin and Colin in \cite{CC04} 
as a model of laser-plasma interaction. (See also \cite{CC06}, \cite{CCO09_1}.)
The local existence of the solution of (\ref{NLS_sys}) in $\h^s(\R^d)$ for $s>\frac{d}{2}+3$ is obtained in \cite{CC04}. 
The system (\ref{NLS_sys}) is invariant under the following scaling transformation:
\begin{equation}
%\label{scaling_tr}
\notag
A_{\lambda}(t,x)=\lambda^{-1}A(\lambda^{-2}t,\lambda^{-1}x)\ \ (A=(u,v,w) ), 
\end{equation}
and the scaling critical regularity is $s_{c}=\frac{d}{2}-1$. 
We set
\begin{equation}\label{coeff}
\mu :=\alpha\beta\gamma \left(\frac{1}{\alpha}-\frac{1}{\beta}-\frac{1}{\gamma}\right),\ \ 
\kappa :=(\alpha -\beta)(\alpha -\gamma)(\beta +\gamma). 
\end{equation}
We note that $\kappa =0$ does not occur when $\mu \ge 0$ 
for $\al$, $\be$, $\ga\in \R\backslash \{0\}$. 

First, we mention some known results for related problems. 
Since the system (\ref{NLS_sys}) has quadratic nonlinear terms which contain a derivative,
a derivative loss arising from the nonlinearity makes the problem difficult. 
In fact, Mizohata (\cite{Mi85}) considered the Schr\"odinger equation
\[
\begin{cases}
i\partial_{t}u-\Delta u=(b_{1}(x)\cdot \nabla ) u,\ t\in \R ,\ x\in \R^{d},\\
u(0,x)=u_{0}(x),\ x\in \R^{d}
\end{cases}
\]
and proved that the uniform bound
\[
\sup_{x\in \R^{n},\omega \in S^{n-1},R>0}\left| {\rm Re}\int_{0}^{R}b_{1}(x+r\omega )\cdot \omega dr\right| <\infty
\]
is a necessary condition for the $L^{2} (\R^d)$ well-posedness. 
Furthermore, Christ (\cite{Ch}) proved that the flow map of 
the nonlinear Schr\"odinger equation
\begin{equation}
%\label{1dqdnls}
\notag
\begin{cases}
i\partial_{t}u-\partial_{x}^{2}u=u\partial_{x}u,\ t\in \R,\ x\in \R,\\
u(0,x)=u_{0}(x),\ x\in \R
\end{cases}
\end{equation}
is not continuous on $H^{s} (\R)$ for any $s\in \R$. 
From these results, 
it is difficult to obtain the well-posedness 
for quadratic derivative nonlinear Schr\"odinger equation in general. 
%While for the system of quadratic derivative nonlinear equations,
%it is known that the well-posedness holds. 
%\end{comment}
%
See \cite{Por18} and references therein for derivative nonlinear Schr\"odinger equations with cubic or higher order nonlinearities.

Next, we introduce the previous results for (\ref{NLS_sys}). 
In \cite{Hi} and \cite{HK}, the first and second 
authors proved the well-posedness of (\ref{NLS_sys}) 
in $\h^s(\R^d)$ under the condition $\kappa \ne 0$, 
where $s$ is given in Table~\ref{WP_NLS_sys} below. 
\begin{table}[h]
\begin{center}
\begin{tabular}{|l|l|l|l|l|}
\hline
\multicolumn{2}{|c|}{} & \multicolumn{1}{|c|}{$d=1$} & 
\multicolumn{1}{|c|}{$d=2$ or $3$} & 
\multicolumn{1}{|c|}{$d\geq 4$}\\
\hline
\multicolumn{2}{|c|}{$\mu > 0$} & \multicolumn{1}{|c|}{$s\geq 0$} &\multicolumn{1}{|c|}{$s\geq s_{c}$} & \\              
\cline{1-4}
\multicolumn{2}{|c|}{$\mu =0$} & \multicolumn{1}{|c|}{$s\geq 1$} & 
\multicolumn{1}{|c}{$s\geq 1$}
& \multicolumn{1}{|c|}{$s\geq s_{c}$}
\\
\cline{1-4}
\multicolumn{2}{|c|}{$\mu < 0$} & \multicolumn{1}{|c|}{$s\geq \frac{1}{2}$} & 
\multicolumn{1}{|c|}{$s\geq \frac{1}{2}$, $s>s_c$}&\multicolumn{1}{|c|}{}\\
\hline
\end{tabular}
\caption{Regularities to be well-posed in \cite{Hi} and \cite{HK}}
\end{center}
\end{table}\label{WP_NLS_sys}

In \cite{Hi}, the first author also considered 
the case $\kappa =0$ 
and proved the 
well-posedness of \rm (\ref{NLS_sys}) 
in $\h^s(\R^d)$ for $s\ge \frac{1}{2}$ if $d=1$, 
$s>1$ if $d=2$ or $3$, and $s>s_c$ if $d\ge 4$ 
under the condition $\alpha=\beta$ and $(\beta +\gamma)(\gamma-\alpha)=0$. 
On the other hand, 
the first author proved that
the flow map is not $C^2$ for $s<1$ if $\mu= 0$, 
for $s<\frac{1}{2}$ if $\mu< 0$ and 
$(\beta+\gamma)(\gamma -\alpha)\ne 0$, 
and for any $s\in \R$ if $(\beta+\gamma)(\gamma -\alpha)=0$.
Therefore, the well-posedness  
obtained in \cite{Hi} and \cite{HK} 
are optimal except for the following cases 
{\rm (A1)}--{\rm (A4)} 
as far as we use the iteration argument:
\begin{enumerate}
\item[{\rm (A1)}] $d=1$, $\mu >0$, and $s<0$,
\item[{\rm (A2)}] $d=2$ or $3$, $\alpha =\beta$, 
$(\beta+\gamma)(\gamma-\alpha)\ne 0$, and $\frac{1}{2}\le s \le 1$,
\item[{\rm (A3)}] $d=3$, $\kappa \ne 0$, $\mu <0$ and $s=\frac{1}{2}$ (which is scaling critical),
\item[{\rm (A4)}] $d\ge 4$, $\alpha =\beta$,
$(\beta+\gamma)(\gamma-\alpha)=0$, and $s=s_c$.
\end{enumerate}
The radial settings are also considered in \cite{HKO}. 

We point out that the results in \cite{Hi} and \cite{HK} do not 
contain the asymptotic behavior of the solution for 
$d\le 3$ under the condition $\mu =0$ 
(and also $\mu <0$). 
In \cite{IKS13}, Ikeda, Katayama, and Sunagawa considered 
the system of quadratic nonlinear Schr\"odinger equations
\begin{equation}\label{qDNLS}
\left(i\partial_t+\frac{1}{2m_j}\Delta\right)u_j=F_j(u,\partial_xu),\ \ t>0,\ x\in \R^d,\ j=1,2,3, 
\end{equation}
under the mass resonance condition 
$m_1+m_2=m_3$ (which corresponds to the condition $\mu =0$ for (\ref{NLS_sys})), 
where $u=(u_1,u_2,u_3)$ is $\C^3$-valued, 
$m_1$, $m_2$, $m_3\in \R\backslash \{0\}$, and $F_j$ is defined by
\begin{equation}\label{qDNLS_nonlin}
\begin{cases}
F_{1}(u,\partial_xu)=\sum_{|\alpha |, |\beta|\le 1}
C_{1,\alpha,\beta}(\overline{\partial^{\alpha}u_2})(\partial^{\beta}u_3),\\
F_{2}(u,\partial_xu)=\sum_{|\alpha |, |\beta|\le 1}
C_{1,\alpha,\beta}(\partial^{\beta}u_3)(\overline{\partial^{\alpha}u_1}),\\
F_{3}(u,\partial_xu)=\sum_{|\alpha |, |\beta|\le 1}
C_{1,\alpha,\beta}(\partial^{\alpha}u_1)(\partial^{\beta}u_2)
\end{cases}
\end{equation}
with some constants $C_{1,\alpha,\beta}$, $C_{2,\alpha,\beta}$, $C_{3,\alpha,\beta}\in \C$. 
They obtained the small data global existence and the scattering 
of the solution to (\ref{qDNLS})
in the weighted Sobolev space for $d=2$ 
if
%under the mass resonance condition
%and
the nonlinear terms $F_j$ in \eqref{qDNLS_nonlin} satisfy
the null condition. 
They also proved the same result for $d\ge 3$ without the null condition. 
In \cite{IKO16}, Ikeda, Kishimoto, and third author proved
the small data global well-posedness and the scattering of the solution 
to (\ref{qDNLS}) in $\h^s (\R^d)$ for $d\ge 3$ and $s\ge s_c$ 
under the null condition. 
They also proved the local well-posedness in $\h^s (\R^d)$
for $d=1$ and $s\ge 0$, $d=2$ and $s>s_c$, and $d=3$ and $s\ge s_c$ 
under the same conditions. 
(The results in \cite{Hi} for $d\le 3$ and $\mu =0$ 
say that if the nonlinear terms do not have null condition, 
then $s=1$ is optimal regularity to obtain the well-posedness 
by using the iteration argument.)

In \cite{SaSu}, 
Sakoda and Sunagawa considered 
(\ref{qDNLS}) for $d=2$ and $j=1,\dots, N$ with
\begin{equation}\label{qDNLS_nonlin_2}
F_j(u,\partial_xu)
=\sum_{|\alpha|, |\beta|\le 1}\sum_{1\le k,l\le 2N}
C^{\alpha, \beta}_{j,k,l}(\partial_x^{\alpha}u_k^{\#})(\partial_x^{\beta}u_l^{\#}),
\end{equation}
where $u_k^{\#}=u_k$ if $k=1,\dots ,N$, and 
$u_k^{\#}=\overline{u_k}$ if $k=N+1,\dots ,2N$. 
They obtained the small data global existence and the time decay estimate 
for the solution
under some conditions for $m_1,\dots m_N$ 
and the nonlinear terms (\ref{qDNLS_nonlin_2}).
Note that their argument covered (\ref{NLS_sys}) with $\mu =0$. 
In \cite{KS20}, Katayama and Sakoda 
considered (\ref{qDNLS}) for $d=1$ and $2$ 
with more general nonlinearity 
than (\ref{qDNLS_nonlin_2}), and obtained 
asymptotic behavior of solution for small initial data. 
In particular, they gave the examples of non-scattering 
solutions to (\ref{NLS_sys}) for small initial data 
under the condition $\mu =0$. 
Moreover, it is known that the existence of the blow up solutions 
for the system of nonlinear Schr\"odinger equations. 
Ozawa and Sunagawa (\cite{OS13}) gave the examples of the derivative nonlinearity which causes the small data blow up for a system of Schr\"odinger equations. 
See \cite{HLN11}, \cite{HLO11}, \cite{HOT13} and references therein for a system of nonlinear Schr\"odinger equations without derivative nonlinearity. 

Now, we give our main results. 
The first result is that
the flow map fails to be $C^3$ for the case {\rm (A1)}.
\begin{thm}\label{NotC3}
Let $d=1$, $\mu >0$, and $s<0$. 
Then, the flow map of {\rm (\ref{NLS_sys})} 
is not $C^3$ in $\h^s(\R^d)$. 
\end{thm}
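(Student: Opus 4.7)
The plan is to show that the third Fréchet derivative of the solution map of \eqref{NLS_sys} at the origin -- the trilinear operator given by the cubic term of the Picard iteration -- is unbounded as a map from $\h^{s}(\R)^{3}$ into $\h^{s}(\R)$ for any $s<0$ and any fixed $t>0$. This suffices to rule out $C^{3}$ regularity of the flow at the origin. I will exhibit a sequence of frequency-localised initial data of uniformly bounded $\h^{s}$-norm whose cubic contribution to the solution has $\h^{s}$-norm tending to infinity.

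First I would extract the cubic-in-data part of the Picard iteration. Substituting the linear evolutions of the data into the nonlinearities produces the quadratic iterates $u^{(2)},v^{(2)},w^{(2)}$; iterating once more gives, for instance,
\[
w^{(3)}(t)=i\int_{0}^{t}S_{\ga}(t-t_{1})\nabla\Bigl[u^{(2)}(t_{1})\overline{v_{\text{lin}}(t_{1})}+u_{\text{lin}}(t_{1})\overline{v^{(2)}(t_{1})}\Bigr]dt_{1},
\]
with analogous expressions for $u^{(3)}$ and $v^{(3)}$. Taking the spatial Fourier transform and performing the two time integrations explicitly, each cubic term becomes a trilinear Fourier multiplier whose symbol is the product of two derivative factors (from the two $\nabla$'s in the nonlinearity), two resonance denominators $\Phi_{\text{in}}$ and $\Phi_{\text{out}}$ -- quadratic forms in the frequency variables with coefficients built from $\al,\be,\ga$ -- and two oscillatory numerators of the form $e^{it\Phi}-1$.

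The next step is to exhibit data that make this trilinear multiplier large. I would take indicator-like Fourier bumps
\[
\hat{\phi}_{\ast}=N^{-s}\mathbf{1}_{I_{\ast}},\qquad \ast\in\{u,v,w\},
\]
on unit intervals $I_{\ast}$ centred at frequencies of order $N$, so that each $\|\phi_{\ast}\|_{H^{s}}\asymp 1$. The centres of $I_{u},I_{v},I_{w}$ are to be arranged so that (i) the intermediate bilinear interaction concentrates at a scale where one of the $\nabla$-derivative factors survives the convolution, (ii) both $\Phi_{\text{in}}$ and $\Phi_{\text{out}}$ have nondegenerate size $\asymp N^{2}$ (forced by the condition $\mu>0$), and (iii) the resulting output is supported on a frequency window of measure $\asymp 1$. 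A direct bookkeeping of the powers of $N$ produced by the symbol against the cubic norm of the data should then yield a lower bound of the form $\|w^{(3)}(t)\|_{H^{s}}\gec c(t)\,N^{-2s}$, which diverges as $N\to\infty$ whenever $s<0$.

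The principal obstacle is the frequency-selection step. The Hessian of the resonance quadratic form $\al\xi^{2}-\ga(\xi-\eta)^{2}-\be\eta^{2}$ has determinant $4\mu>0$, so in the $\mu>0$ regime it is sign-definite and there is no resonance singularity to be exploited; the blow-up must instead come from a precise balance between the two derivative factors, the two nondegenerate denominators, and the oscillatory numerators $e^{it\Phi}-1$, which for fixed $t$ and $|\Phi|\gg 1/t$ cannot be linearised. Locating a configuration on which this balance yields divergence for \emph{every} $s<0$ (rather than only near the scaling-critical value $s_{c}=-\tfrac{1}{2}$) is the main technical hurdle; the argument will split into cases according to the signs of $\al-\be$, $\al-\ga$ and $\be+\ga$ compatible with $\mu>0$. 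Once an appropriate configuration is isolated, the remaining lower bound reduces to a routine calculation.
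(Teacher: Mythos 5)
Your framework is the right one and matches the paper's: reduce to unboundedness of the third Picard iterate, take Fourier bumps $N^{-s}\mathbf{1}_{I}$ of unit $H^s$-norm at frequencies $\sim N$, and aim for a lower bound $\gtrsim t\,N^{-2s}$. But the step you flag as the ``main technical hurdle'' --- the frequency-selection --- is precisely the content of the proof, and the mechanism you are looking for is not the one you describe. You ask for both resonance functions $\Phi_{\text{in}}$ and $\Phi_{\text{out}}$ to be of size $N^2$ and hope a ``balance'' of derivative factors against denominators gives $N^{-2s}$; it does not. Writing out the double Duhamel integral,
\begin{equation*}
\int_0^t e^{it'\Phi_{\text{out}}}\int_0^{t'}e^{-it''\Phi_{\text{in}}}\,dt''\,dt'
=\frac{1}{i\Phi_{\text{in}}}\left(\int_0^t e^{it'\Phi_{\text{out}}}\,dt'-\int_0^t e^{it'(\Phi_{\text{out}}-\Phi_{\text{in}})}\,dt'\right),
\end{equation*}
one sees that if the \emph{difference} $\Psi:=\Phi_{\text{out}}-\Phi_{\text{in}}$ is also of size $N^2$, then the whole multiplier is $O(N^{-4})$; against the two derivative factors $\sim N^2$ and the data this yields only $N^{-2s-2}$, which diverges for no $s\in[-1,0)$. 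The missing idea is to place the bump of $u_0$ at frequency $kN$ with $k=\gamma/\alpha$, which forces $\Psi=O(1)$ on the support of the convolution while $\mu>0$ (hence $\alpha\neq\gamma$) keeps $|\Phi_{\text{in}}|\sim|\Phi_{\text{out}}|\sim N^2$; then the second time integral above is non-oscillatory and contributes a factor $t$ rather than $N^{-2}$, giving $t\,N^{-2}\cdot N^2\cdot N^{-2s}=t\,N^{-2s}$. So the growth comes exactly from the linearisation of $e^{it'\Psi}-1$ that you declared impossible --- it is possible because $\Psi$, unlike the individual phases, is $O(1)$. No case analysis on the signs of $\alpha-\beta$, $\alpha-\gamma$, $\beta+\gamma$ is needed.

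Two smaller points. First, the paper sets $v_0=0$, which kills $u^{(2)},w^{(2)},v^{(3)}$ and leaves a single cubic chain $u^{(3)}=i\mathcal{I}_{\alpha}(\partial_x w^{(1)}\cdot v^{(2)})$ with $v^{(2)}=i\mathcal{I}_{\beta}(\partial_x \overline{w^{(1)}}\cdot u^{(1)})$; without such a reduction you must rule out cancellation among the several trilinear terms in the third derivative. Second, your claim that the sign-definiteness of the resonance quadratic form (from $\mu>0$) leaves ``no resonance singularity to be exploited'' is true but beside the point: the hypothesis $\mu>0$ is used only to guarantee $\alpha(\gamma/\alpha)^2-\beta(\gamma/\alpha-1)^2-\gamma=(\alpha-\gamma)\mu/\alpha^2\neq 0$, i.e.\ the nondegeneracy of the inner phase at the chosen configuration.
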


It is known that the flow map is smooth if we prove the well-posedness by using the contraction mapping theorem (or the iteration argument).
While there is a gap between the failure of the smoothness of the flow map and ill-posedness,
Theorem \ref{NotC3} says that the contraction mapping theorem does not work to prove the well-posedness of \eqref{NLS_sys} for $s<0$.

While the nonlinear term in \eqref{NLS_sys} is quadratic,
to show the existence of an irregular flow map,
we need to consider the third iteration term as in the KdV equation (see Section 6 in \cite{Bo97}).

Next, we consider the case {\rm (A2)}. 
%Namely, 
%$\alpha =\beta \ne 0$ and $(\beta+\gamma)(\gamma-\alpha)\ne 0$.  

\begin{thm}\label{LWP:A2}
Let $d=2$ or $3$.
Assume that $\alpha, \beta, \gamma \in \R \setminus \{ 0 \}$ satisfy
\[
\alpha =\beta \ne 0, \quad
(\beta+\gamma)(\gamma-\alpha)\ne 0.
\]
Then, \eqref{NLS_sys} 
is locally well-posed in $\h^s(\R^d)$ 
for $s\ge \frac{1}{2}$ and $s>s_c$. 
\end{thm}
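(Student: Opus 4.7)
The plan is to recast \eqref{NLS_sys} as an integral equation via Duhamel's formula and solve it by contraction in $U^2$/$V^2$-type critical function spaces $\XS{s}$ (in the spirit of Koch--Tataru and Hadac--Herr--Koch) adapted separately to each of the three Schr\"odinger propagators $e^{it\al\laplacian}$, $e^{it\be\laplacian}$, $e^{it\ga\laplacian}$. These spaces are scale-invariant at $s=s_c$, embed into $C_tH^s_x$, and admit an atomic decomposition well-suited to bilinear analysis, so local well-posedness in $\h^s(\R^d)$ reduces to proving three bilinear estimates, one for each nonlinear term $\na(u\cdot\overline v)$, $(\na\cdot w)v$, $(\na\cdot\overline w)u$, in the space associated with the output variable.

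After a dyadic decomposition in frequency, each bilinear estimate is controlled by a family of bilinear $L^2_{t,x}$ estimates for free Schr\"odinger waves on the characteristic paraboloids $\ta=-\la|\x|^2$ (and their conjugates) with $\la\in\{\al,\be,\ga\}$. The gain over H\"older's inequality is governed by the transversality of the interacting paraboloids, yielding a factor $(N_{\min}/N_{\max})^{(d-1)/2}$ of Bejenaru--Herr--Tataru type whenever the normals are uniformly nonparallel. A direct computation of the resonance function, e.g.\ $\Phi=\al|\x|^2-\ga|\x_1|^2-\be|\x_2|^2$ for the $(\na\cdot w)v$ interaction, shows that under $\al=\be\ne 0$ and $(\be+\ga)(\ga-\al)\ne 0$ every high-high and high-low configuration that arises is non-degenerate: $\ga\ne\al$ separates the $w$-paraboloid from the coinciding $u$- and $v$-paraboloids (handling the nonlinearities of the $u$- and $v$-equations), $\be+\ga\ne 0$ rules out the sole remaining degeneracy in the $\na(u\overline v)$ interaction, and $\al\ne 0$ itself provides transversality of the $u$- and $\overline v$-surfaces.

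The main technical obstacle is to quantify this transversality sharply enough to reach the endpoint $s=\tfrac12$ when $d=2$ and the near-critical $s>s_c=\tfrac12$ when $d=3$, where no logarithmic loss in the dyadic summation can be tolerated. The plan is to carry out a Whitney-type angular decomposition of the frequency variables into tubes aligned with the direction of transversality, establish an almost-orthogonality estimate on these tubes, and then apply the sharp bilinear Schr\"odinger estimate on each piece. The hardest case is expected to be the $(\na\cdot w)v$ nonlinearity in the regime where $w$ sits at the highest frequency $N_1$: the derivative costs a full power of $N_1$, so absorbing this loss at regularity $s=\tfrac12$ requires extracting the full $(d-1)/2$ transversal gain from the angle between the $\al$- and $\ga$-paraboloids, together with a summation of dyadic pieces that is uniform in the short existence time. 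Finally, the strict inequality $s>s_c$ when $d=3$ reflects the standard absence of further critical-regularity refinements in this setting.
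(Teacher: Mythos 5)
Your overall architecture (Duhamel plus dyadic and angular decomposition plus bilinear Strichartz with a transversality gain) points in the right direction, but the proposal rests on a non-degeneracy claim that is false precisely in the regime that makes this theorem non-trivial. After normalizing $\alpha=\beta=1$, $\sigma=\gamma/\alpha$, the $u$- and $\overline v$-characteristic surfaces are $\tau=-|\xi_1|^2$ and $\tau=+|\xi_2|^2$, whose normals $(1,2\xi_1)$ and $(1,-2\xi_2)$ coincide when $\xi_2=-\xi_1$. So in the high-high interaction with $|\xi_1|\sim|\xi_2|$ and small output frequency $|\xi_1+\xi_2|$, these two surfaces are \emph{tangent}, not transversal, and the resonance function $\Phi=|\xi_1|^2-|\xi_2|^2-\sigma|\xi_1+\xi_2|^2$ vanishes on a large set: at low modulation one only gets $|\xi_1|\sim|\xi_2|\gtrsim|\xi_3|$ (Lemma \ref{modul_est_2}), with $|\xi_3|\ll|\xi_1|$ genuinely allowed. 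Your statements that ``every high-high and high-low configuration that arises is non-degenerate'' and that ``$\al\ne 0$ itself provides transversality of the $u$- and $\overline v$-surfaces'' are therefore incorrect; this degenerate high-high-to-low interaction is exactly why the case $\alpha=\beta$, $\tfrac12\le s\le 1$ was left open in \cite{Hi,HK}. Relatedly, you misidentify the hardest case: when $w$ carries the highest frequency and $u$ or $v$ is at low frequency, Lemma \ref{modul_est_2} forces $L_{\max}\gtrsim N_{\max}^2$, and the derivative loss is absorbed by the modulation gain in the bilinear Strichartz estimate; the derivative in $(\nabla\cdot w)v$ and $\nabla(u\overline v)$ only ever hits $w$, so in the dangerous low-modulation regime it costs merely $N_3\lesssim N_1$.

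Because of this, the plan of ``applying the sharp bilinear Schr\"odinger estimate on each angular piece'' cannot close: on an angular sector of aperture $A^{-1}$ around the antipodal direction, a purely bilinear $L^2$ estimate for the two tangent paraboloids gives at best the bound of Proposition \ref{BSE_HHL}, which loses when summed over $A$ unless one also exploits the third surface. The paper's resolution is a trilinear ingredient: for angularly separated pieces ($|j_1-j_2|\sim 1$ at scale $A$) one verifies the uniform transversality $|\det N(\lambda_1,\lambda_2,\lambda_3)|\gtrsim A^{-1}$ of all three rescaled characteristic surfaces and invokes the nonlinear Loomis--Whitney inequality (Proposition \ref{prop2.7}) to get the $A^{1/2}N_{\max}^{-1}(L_1L_2L_3)^{1/2}$ bound of Proposition \ref{thm2.6}, with the nearly parallel pieces handled separately at the threshold angle $M^{-1}$ determined by the modulation. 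Your proposal contains no trilinear mechanism and no use of the third modulation $L_3$, so the summation over angular scales would not converge at $s=\tfrac12$. (The choice of $U^2/V^2$ spaces is a legitimate alternative to the paper's $X^{s,b'}$ with $b'<\tfrac12$, but it is orthogonal to the actual difficulty and forfeits the factor $T^{b-b'}$ that the paper uses to close the contraction at subcritical regularity.)
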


We mention the difference of Theorem \ref{LWP:A2} and the previous results in \cite{HK}.
In \cite{HK},
they used the fact that the sizes of the frequencies of $u$, $v$, and $w$ are almost the same when the oscillation is small.
On the other hand,
when $\alpha=\beta$,
the frequency of $w$ may be smaller than that of $u$ and $v$ even if the oscillation is small.
See Lemma \ref{modul_est_2} below.
However, since the derivative only hits $w$ in \eqref{NLS_sys},
we can treat this case.

For the proof of Theorem \ref{LWP:A2},
we use the Fourier restriction norm method introduced by Bourgain in \cite{Bo93}.
Namely, we rely on the contraction mapping theorem in the Fourier restriction norm space.
A bilinear estimate in the Fourier restriction norm space plays a key role in the proof.
See Proposition \ref{key_be} below.
Moreover, the flow map obtained in Theorem \ref{LWP:A2} is smooth.
From Theorems \ref{NotC3} and \ref{LWP:A2} with the previous results in \cite{Hi} and \cite{HK},
we obtain a classification of the regularity where the smooth flow map exists except for the scaling critical cases {\rm (A3)} and {\rm (A4)}.

\noindent {\bf Notation.} 
We denote the spatial Fourier transform by\ \ $\widehat{\cdot}$\ \ or $\F_{x}$, 
the Fourier transform in time by $\F_{t}$ and the Fourier transform in all variables by\ \ $\widetilde{\cdot}$\ \ or $\F_{tx}$. 
For $\sigma \in \R$, the free evolution $e^{it\sigma \Delta}$ on $L^{2}$ is given as a Fourier multiplier
\[
\F_{x}[e^{it\sigma \Delta}f](\xi )=e^{-it\sigma |\xi |^{2}}\widehat{f}(\xi ). 
\]
We will use $A\lesssim B$ to denote an estimate of the form $A \le CB$ for some constant $C$ and write $A \sim B$ to mean $A \lesssim B$ and $B \lesssim A$. 
We will use the convention that capital letters denote dyadic numbers, e.g. $N=2^{n}$ for $n\in \N_0:=\N\cup\{0\}$ and for a dyadic summation we write
$\sum_{N}a_{N}:=\sum_{n\in \N_0}a_{2^{n}}$ and $\sum_{N\geq M}a_{N}:=\sum_{n\in \N_0, 2^{n}\geq M}a_{2^{n}}$ for brevity. 
Let $\chi \in C^{\infty}_{0}((-2,2))$ be an even, non-negative function such that $\chi (t)=1$ for $|t|\leq 1$. 
We define $\psi (t):=\chi (t)-\chi (2t)$, 
$\psi_1(t):=\chi (t)$, and $\psi_{N}(t):=\psi (N^{-1}t)$ for $N\ge 2$. 
Then, $\sum_{N}\psi_{N}(t)=1$.  
We define frequency and modulation projections by
\[
\widehat{P_{N}u}(\xi ):=\psi_{N}(\xi )\widehat{u}(\xi ),\ 
\widetilde{Q_{L}^{\sigma}u}(\tau ,\xi ):=\psi_{L}(\tau +\sigma |\xi|^{2})\widetilde{u}(\tau ,\xi ).
\]
Furthermore, we define $Q_{\geq M}^{\sigma}:=\sum_{L\geq M}Q_{L}^{\sigma}$ and $Q_{<M}:=Id -Q_{\geq M}$. 

The rest of this paper is planned as follows.
In Section 2, we restate Theorem \ref{LWP:A2} and collect some results on
the linear and bilinear Strichartz estimates 
and the property of low modulation. 
In Section 3, we prove Proposition~\ref{key_be} 
for $d=2$. 
In Section 4, we show Proposition~\ref{key_be} 
for $d=3$. 
In Section 5, we prove Theorem~\ref{NotC3}. 
\section{Preliminary}
%
%%%%%%%%%%%%%%%%%%%%%%%%%%%%%%%%%%%%%%%%%%%%%%%%%%%%%%%%%%%%%%%%%%%%%%%%%%%%%%%%%
%%%%%%%%%%%%%%%%%%%%%%%%%%%%%%%%%%%%%%%%%%%%%%%%%%%%%%%%%%%%%%%%%%%%%%%%%%%%%%%%%
%%%%%%%%%%%%%%%%%%%%%%%%%%%%%%%%%%%%  Section 2  %%%%%%%%%%%%%%%%%%%%%%%%%%%%%%%%%%
%%%%%%%%%%%%%%%%%%%%%%%%%%%%%%%%%%%%%%%%%%%%%%%%%%%%%%%%%%%%%%%%%%%%%%%%%%%%%%%%%
%%%%%%%%%%%%%%%%%%%%%%%%%%%%%%%%%%%%%%%%%%%%%%%%%%%%%%%%%%%%%%%%%%%%%%%%%%%%%%%%%
%

In this section, we state some preliminary results used in the proof of Theorem \ref{LWP:A2}.
In Subsection \ref{2.1},
by using the condition for coefficients,
we restate Theorem \ref{LWP:A2}. 
We then define the Fourier restriction norm space.
In Subsection \ref{2.2},
we collect some useful lemmas.

\subsection{Fourier restriction norm spaces}
\label{2.1}

First, we rewrite Theorem \ref{LWP:A2} by using a change of variables.
Set $\sigma =\alpha^{-1}\gamma$ and
\[
U(t,x)=\alpha^{-1}u(\alpha^{-1}t,x),\ 
V(t,x)=\alpha^{-1}v(\alpha^{-1}t,x),\ 
W(t,x)=\alpha^{-1}w(\alpha^{-1}t,x).  
\]
Then, (\ref{NLS_sys}) can be rewritten 
\begin{equation}\label{NLS_sys_c}
\begin{cases}
\displaystyle (i\p_{t}+\Delta )U=-(\nabla \cdot W)V,
\ \ t>0,\ x\in \R^d,\\
\displaystyle (i\p_{t}+\Delta )V=-(\nabla \cdot \overline{W})U,
\ \ t>0,\ x\in \R^d,\\
\displaystyle (i\p_{t}+\sigma \Delta )W =\nabla (U\cdot \overline{V}),
\ \ t>0,\ x\in \R^d,\\
(U, V, W)|_{t=0}=(U_{0}, V_{0}, W_{0})\in \h^s(\R^d),
\end{cases}
\end{equation}
and the condition $(\beta+\gamma)(\gamma -\alpha)\ne 0$ is 
equivalent to $\sigma \ne \pm 1$.
Hence, Theorem \ref{LWP:A2} is equivalent to the following.
\begin{thm}\label{WPab}
Let $d=2$ or $3$ and $\sigma \in \R\backslash \{0,\pm1\}$. 
Then, {\rm (\ref{NLS_sys_c})} 
is locally well-posed in $\h^s(\R^d)$ 
for $s\ge \frac{1}{2}$ and $s>s_c$. 
\end{thm}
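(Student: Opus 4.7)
The plan is to set up the Fourier restriction norm (Bourgain) spaces adapted to the two distinct dispersion relations in (\ref{NLS_sys_c}), reduce the local well-posedness to a trilinear (really three bilinear) estimate of the Duhamel term, and then outsource the core analytic content to Proposition \ref{key_be}. Concretely, for $s\in\R$ and $b\in\R$ I would define
\EQQ{
\|u\|_{X_S^{s,b}}^2&:=\sum_{N,L}N^{2s}L^{2b}\|P_N Q_L^{1} u\|_{L^2_{t,x}}^2,\\
\|w\|_{X_{W_\sigma}^{s,b}}^2&:=\sum_{N,L}N^{2s}L^{2b}\|P_N Q_L^{\sigma} w\|_{L^2_{t,x}}^2,
}
using the projections introduced in the Notation. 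Solutions will be sought in $X_S^{s,1/2}\cap$ (Besov-type refinement) when $s=s_c$ is barely excluded, and in $X_S^{s,b}$ with $b>1/2$ away from criticality. Since $s\geq 1/2$ and $s>s_c$ the standard $b>1/2$ setup suffices up to an endpoint adjustment using the $V^2/U^2$ or $\ell^1_L$ Besov variant.

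Next, I would carry out the standard reduction to contraction in a time-restricted space. Writing the integral equation $(U,V,W)=\Phi(U,V,W)$ from the Duhamel formula and multiplying by a smooth time cutoff $\chi(t/T)$, the well-known linear and inhomogeneous estimates for $X^{s,b}$ spaces give
\EQQ{
\|\chi(t/T)e^{it\Delta}U_0\|_{X_S^{s,b}}&\lesssim \|U_0\|_{H^s},\\
\Big\|\chi(t/T)\int_0^t e^{i(t-t')\Delta}F(t')\,dt'\Big\|_{X_S^{s,b}}&\lesssim T^{\theta}\|F\|_{X_S^{s,b-1}},
}
for some $\theta>0$ when $b$ is slightly larger than $1/2$, with the analogous statements for $X_{W_\sigma}^{s,b}$. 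Contraction in a small ball then follows provided that one has the bilinear estimates
\EQQ{
\|(\nabla\cdot W)V\|_{X_S^{s,b-1}}&\lesssim \|V\|_{X_S^{s,b}}\|W\|_{X_{W_\sigma}^{s,b}},\\
\|(\nabla\cdot\overline{W})U\|_{X_S^{s,b-1}}&\lesssim \|U\|_{X_S^{s,b}}\|W\|_{X_{W_\sigma}^{s,b}},\\
\|\nabla(U\cdot\overline{V})\|_{X_{W_\sigma}^{s,b-1}}&\lesssim \|U\|_{X_S^{s,b}}\|V\|_{X_S^{s,b}},
}
for all $s\geq 1/2$ with $s>s_c$, which is exactly the content of Proposition \ref{key_be}.

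The main obstacle is of course the bilinear estimate. After dyadic decomposition $P_{N_0}(P_{N_1}\cdot P_{N_2}\cdot)$ and decomposition in modulation, one compares the sum of modulations to the resonance function. For the $W$-equation, with frequencies $\xi=\xi_1-\xi_2$ and time variables $\tau=\tau_1-\tau_2$, the resonance is
\EQQ{
|\xi_1|^2-|\xi_2|^2-\sigma|\xi_1-\xi_2|^2=(1-\sigma)|\xi_1-\xi_2|^2+2(\xi_1-\xi_2)\cdot\xi_2+\cdots,
}
and similar expressions arise in the $U$- and $V$-equations, with the crucial factor $1-\sigma$ or $1+\sigma$ appearing in each case. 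Since $\sigma\in\R\setminus\{0,\pm1\}$, these factors are non-vanishing, and this is precisely what Lemma \ref{modul_est_2} (to be stated later) will encode. The genuinely new difficulty relative to \cite{HK} is that with $\alpha=\beta$ the low-modulation regime no longer forces $N_0\sim N_1\sim N_2$: the output frequency of $W$ can be much smaller than those of $U,V$ (a ``high-high to low'' interaction). I would treat this case separately, exploiting the gradient $\nabla$ on the output $W$-frequency being small so that the derivative loss is mild, and applying bilinear Strichartz estimates for transversal Schr\"odinger waves (which require $\sigma\neq 1$) to recover a summable bound. In the reverse interaction, where one input is $W$ at low frequency and the output is $U$ or $V$, the derivative on $W$ is harmless and a straightforward Strichartz plus $L^\infty_tL^2_x\cdot L^2_tL^\infty_x$ estimate closes the argument. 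These angular/modulation case splits are the technical heart carried out in Sections 3 and 4 for $d=2$ and $d=3$ respectively.
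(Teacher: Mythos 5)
Your proposal follows essentially the same route as the paper: Theorem \ref{WPab} is obtained by the standard contraction argument in the Fourier restriction norm spaces once Proposition \ref{key_be} is available, and your identification of the new difficulty (the high-high-to-low interaction $N_3\ll N_1\sim N_2$ surviving in the low-modulation regime because $\alpha=\beta$, handled by angular decomposition and transversal bilinear Strichartz estimates, with the derivative always falling on the $W$-frequency) matches what the paper does in Sections 3 and 4. One bookkeeping correction: with the nonlinearity measured in $X^{s,b-1}$ and $b>\frac12$, the inhomogeneous Duhamel estimate yields no positive power of $T$ (indeed $b-1<-\frac12$ lies outside its usual range of validity), so the factor $T^{\theta}$ should instead be extracted as in the paper, namely from the fact that Proposition \ref{key_be} holds with modulation exponents $\pm b'$ for some $b'<\frac12$, combined with the embedding $\|u\|_{X^{s,b'}_{\sigma}(T)}\lesssim T^{b-b'}\|u\|_{X^{s,b}_{\sigma}(T)}$ applied to the inputs.
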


Now, we define the Fourier restriction norm, 
which was introduced by Bourgain in \cite{Bo93}. 
\begin{defn}
Let $s\in \R$, $b\in \R$, $\sigma \in \R\backslash \{0\}$. 
We define $X^{s,b}_{\sigma}:=\{u\in \s'(\R_t\times \R_x^d)|\ \|u\|_{X^{s,b}_{\sigma}}<\infty\}$,  
where
\[
\begin{split}
\|u\|_{X^{s,b}_{\sigma}}&:=
\|\langle \xi \rangle^s\langle \tau +\sigma |\xi|^2\rangle^b\widetilde{u}(\tau,\xi)\|_{L^2_{\tau\xi}}\\
&\sim \left(\sum_{N\ge 1}
\sum_{L\ge 1}N^{2s}L^{2b}\|Q_{L}^{\sigma}P_{N}u\|_{L^2}^2\right)^{\frac{1}{2}}, 
\end{split}
\]
where $\langle \xi \rangle := (1+|\xi|^2)^{\frac{1}{2}}$, $P_N$ and $Q_L^{\sigma}$ are 
defined in Notation at the end of Section \ref{intro}. 
\end{defn}
The key estimates to obtain 
Theorem~\ref{WPab} are the following. 
\begin{prop}\label{key_be}
Let $d=2,3$, $\sigma \in \R\backslash \{0,\pm 1\}$, $s\ge \frac{1}{2}$, 
$s>s_c$, 
and $j\in \{1,\dots, d\}$.  
Then there exist $b'\in (0,\frac{1}{2})$ and $C>0$ such that
\begin{align}
\|(\p_jW)V\|_{X^{s,-b'}_{1}}
&\le C\|W\|_{X^{s,b'}_{\sigma}}\|V\|_{X^{s,b'}_{1}},
% \label{be_111}
\notag\\
\|(\p_j\overline{W})U\|_{X^{s,-b'}_{1}}
&\le C\|W\|_{X^{s,b'}_{\sigma}}\|U\|_{X^{s,b'}_{1}},
% \label{be_222}
\notag\\
\|\p_j(U\overline{V})\|_{X^{s,-b'}_{\sigma}}
&\le C\|U\|_{X^{s,b'}_{1}}\|V\|_{X^{s,b'}_{1}}
%\label{be_333}
\notag
\end{align}
hold for any $U,V\in X^{s,b'}_{1}$ 
and $W\in X^{s,b'}_{\sigma}$, 
where $\p_j=\frac{\partial}{\partial x_j}$. 
\end{prop}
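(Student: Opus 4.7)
The plan is to reduce Proposition \ref{key_be} to a single trilinear estimate via duality and symmetry, and then to run the standard $X^{s,b}$ trilinear calculus adapted to the two distinct Schr\"odinger coefficients $1$ and $\sigma$. Since $\sigma \neq 0, \pm1$, the two paraboloids $\{\tau = -|\xi|^2\}$ and $\{\tau = -\sigma |\xi|^2\}$ are transverse, which is the structural input that makes the estimate possible. Dualizing, say, the first inequality reduces it to
\[
\Bigl| \int_{\R\times\R^d} (\partial_j W)\, V\, \overline{Z}\, dt\, dx \Bigr| \lesssim \|W\|_{X^{s,b'}_\sigma} \|V\|_{X^{s,b'}_1} \|Z\|_{X^{-s,b'}_1};
\]
the second inequality follows by replacing $W$ with $\overline{W}$ (which turns $\sigma$ into $-\sigma$, a parameter still covered by the hypothesis), and the third by integration by parts and relabeling.

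I would then decompose each factor dyadically in spatial frequency and in modulation, writing $W = \sum_{N_1, L_1} Q_{L_1}^\sigma P_{N_1} W$ and similarly for $V$ with scales $(N_2, L_2)$ and for $Z$ with $(N_0, L_0)$; the derivative contributes a factor $N_1$. Frequency localization forces the two largest $N_i$ to be comparable, and the resonance identity gives
\[
\max(L_0, L_1, L_2) \gtrsim |H|, \qquad H = (1-\sigma)|\xi_1|^2 + 2\, \xi_1 \cdot \xi_2,
\]
with $\xi_1$ the frequency of $W$ and $\xi_2$ that of $V$. The condition $\sigma \neq \pm 1$ prevents $H$ from degenerating, which is essentially the content of Lemma \ref{modul_est_2}. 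I would then bound each dyadic block via the bilinear Strichartz estimates collected in Subsection \ref{2.2}, which exploit the transversality between the two paraboloids, and sum over all scales.

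The case analysis splits into the high-high-high regime $N_0 \sim N_1 \sim N_2$ and the two high-low regimes. The high-high-high case is handled directly by the bilinear Strichartz estimate, with summability of the dyadic sum using $s > s_c$. The regime $N_1 \sim N_0 \gg N_2$ is straightforward, since the derivative loss $N_1$ is absorbed by the high Sobolev weight on $W$. The main obstacle, as the authors emphasize just after Theorem \ref{LWP:A2}, is the opposite regime $N_1 \ll N_0 \sim N_2$, in which $W$ sits at a lower frequency than $U, V$: here $H$ can be as small as $N_1^2$, so the modulation gain alone is insufficient and a crude bilinear Strichartz bound also falls short.

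To close this last case I would perform a further angular decomposition of $\xi_1$ relative to $\xi_2$, using the near-vanishing of $H$ to localize $\xi_1$ to a thin angular sector around the direction orthogonal to $\xi_2$, and then apply a refined transversal bilinear estimate inside that sector. The threshold $s \geq \tfrac12$ arises precisely when this angular gain is balanced against the derivative factor $N_1$. I anticipate that $d=2$ and $d=3$ require separate geometric treatments of this sector (hence the two sections devoted to the proof in the paper), since the angular variable lies on $S^1$ in the first case and on $S^2$ in the second, and the resonance surface has one higher codimension in $d=3$.
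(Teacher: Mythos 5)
Your overall architecture matches the paper's: duality reduction to the trilinear form of Remark~\ref{duality}, dyadic decomposition in frequency and modulation, the resonance analysis of Lemma~\ref{modul_est_2} showing that the only resonant low-modulation configuration is the one with $W$ at the lowest frequency, bilinear Strichartz (Corollary~\ref{L2be_2}) for the high-modulation contribution, and an angular decomposition for the resonant regime. The gap is in how you close that resonant regime, which is the entire content of Sections~\ref{be_for_2d} and~\ref{be_for_3d}. Your proposed decomposition --- localizing the direction of the low frequency of $W$ to a thin sector nearly orthogonal to $\xi_2$ --- is not the one that works, and you do not exhibit the estimate that is supposed to exploit it. The dangerous bilinear interaction is between $U$ and $\overline{V}$, whose frequencies are nearly antipodal and whose Schr\"odinger coefficients are $1$ and $-1$; hence the transversal form of the bilinear Strichartz estimate (the second bound in Proposition~\ref{L2be}, which needs $\sigma_1+\sigma_2\neq 0$) is unavailable, and the non-transversal form gives no gain when $N_1\sim N_2$. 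The paper instead performs a Whitney decomposition in the angle between the two \emph{high} frequencies (the operators $R_j^A$), splitting into nearly parallel sectors ($|j_1-j_2|\le 16$ at the finest scale $M$) and transversal sectors at each intermediate scale $A$.

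More seriously, the tools that actually produce the threshold $s\ge\frac12$ are absent from your sketch: the refined parallel-sector bilinear bounds of Propositions~\ref{BSE_HHL} and~\ref{BSE_HHL_2}, with their explicit gains $(N_1/(N_3A))^{1/2}$ and $A^{-1/2}$, and --- for the transversal sectors --- the nonlinear Loomis--Whitney convolution estimate for transversal hypersurfaces (Proposition~\ref{prop2.7}), whose transversality constant $a\sim A^{-1}$ must be verified from the three unit normals of the rescaled paraboloid pieces (this is where $\sigma\neq\pm1$ enters quantitatively, via \eqref{trans}). A single ``refined transversal bilinear estimate inside that sector'' is not a substitute for this trilinear input, and the balance that yields $s\ge\frac12$ depends on the precise powers of $A$, $N_3/N_1$ and $L_{\max}$ in these three propositions together with the choice of the finest angular scale $M$ in \eqref{mod2}. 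Two smaller points: the case you call straightforward, with $W$ and the output at comparable high frequency and $V$ low, is excluded from the low-modulation regime by Lemma~\ref{modul_est_2} and is absorbed into the high-modulation case via the $L_{\max}^{-c}\lesssim N_{\max}^{-2c}$ gain rather than by the Sobolev weight on $W$; and for $d=3$ the paper does not redo the geometry on $\mathbb{S}^2$ from scratch but reduces to the two-dimensional estimates by slicing along a slab of thickness $N_1A^{-1}$ and applying Minkowski and Young, a reduction your outline does not anticipate.
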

\begin{rem} \label{duality}
Note that $\| \overline{V} \| _{X^{s,b'}_1} = \| V \| _{X^{s,b'}_{-1}}$.
By the duality argument, 
to obtain Proposition~\ref{key_be}, 
it suffices to show that
\begin{equation}\label{key_est}
\left|\int_{\R}\int_{\R^d}U(t,x)V(t,x)\partial_jW(t,x)dxdt\right|
\lesssim \|U\|_{X^{s_1,b'}_1}\|V\|_{X^{s_2,b'}_{-1}}\|W\|_{X^{s_3,b'}_{-\sigma}}
\end{equation}
for $(s_1,s_2,s_3)\in \{(s,s,-s),(s,-s,s),(-s,s,s)\}$ 
with $s\ge \frac{1}{2}$ and $s>s_c$.
Moreover, Plancherel's theorem yields that the left hand side of \eqref{key_est} is written as follows:
\begin{align*}
&\left|\int_{\R}\int_{\R^d}U(t,x)V(t,x)\partial_jW(t,x)dxdt\right| \\
&= 
\left|\int_{\R}\int_{\R^d} \int_{\R}\int_{\R^d} \widetilde{U}(\tau_1, \xi_1)\widetilde{V}(\tau_2,\xi_2) (\xi_1^{(j)}+\xi_2^{(j)}) \widetilde{W}(-\tau_1-\tau_2,-\xi_1-\xi_2)d\tau_1 d\xi_1 d\tau_2 d\xi_2 \right|,
\end{align*}
where $\xi_1^{(j)}$ and $\xi_2^{(j)}$ 
are the $j$-th components of $\xi_1$ and $\xi_2$,
respectively.
Hence, \eqref{key_est} is equivalent to the following:
\begin{align*}
&\left|\int_{\R}\int_{\R^d} \int_{\R}\int_{\R^d}  (\xi_1^{(j)}+\xi_2^{(j)}) f_1(\tau_1, \xi_1) f_2(\tau_2,\xi_2)f_3 (\tau_1+\tau_2,+\xi_1+\xi_2)d\tau_1 d\xi_1 d\tau_2 d\xi_2 \right| \\
&\lesssim
\| \mathcal{F}^{-1}_{\tau\xi} [f_1]\|_{X^{s_1,b'}_1}\|\mathcal{F}^{-1}_{\tau\xi} [f_2]\|_{X^{s_2,b'}_{-1}}\|\mathcal{F}^{-1}_{\tau\xi} [f_3]\|_{X^{s_3,b'}_{\sigma}}.
\end{align*}
\end{rem}
We note that 
\[
\|u\|_{X^{s,b'}_{\sigma}(T)}\lesssim T^{b-b'}\|u\|_{X^{s,b}_{\sigma}(T)}. 
\]
holds for any $s\in \R$, $\sigma\in \R\backslash \{0\}$, 
$\frac{1}{2}<b\le 1$, 
$0\le b'\le 1-b$, and $0<T\le 1$, 
where $X^{s,b}_{\sigma}(T)$ denotes the time restricted space 
of $X^{s,b}_{\sigma}$. 
Hence,
by using the fixed point argument 
with Proposition~\ref{key_be}, 
we can obtain Theorem~\ref{WPab}.
Since this argument is standard by now,
we omit the details in this paper.
See \cite{Hi} and \cite{HK} for example.

\subsection{Linear and bilinear estimates}
\label{2.2}

In this subsection,
we collect some propositions used in the proof of Proposition \ref{key_be}.
First, we state the Strichartz and bilinear Strichartz estimates.
We say that $(p,q)$ is an admissible pair if $p$ and $q$ satisfy $2 \le p,q \le \infty$, 
$\frac{2}{p}+\frac{d}{q}=\frac{d}{2}$, and $(p,q,d) \neq (2,\infty,2)$.
\begin{prop}[Strichartz estimate (cf. \cite{GV85}, \cite{KT98})]\label{Stri_est}
Let $\sigma \in \R\backslash \{0\}$ and $(p,q)$ be admissible.
Then, we have
\[
\|e^{it\sigma \Delta}\varphi \|_{L_{t}^{p}L_{x}^{q}(\R\times \R^d)}\lesssim \|\varphi \|_{L^{2}_{x}(\R^d)}
\]
for any $\varphi \in L^{2}(\R^{d})$. 
\end{prop}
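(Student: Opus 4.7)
The plan is to reduce to the case $\sigma=1$ and then invoke the by-now classical $TT^*$ / Keel–Tao machinery.

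First I would rescale time: since $e^{it\sigma\Delta}\varphi = (e^{is\Delta}\varphi)\big|_{s=\sigma t}$, the change of variables $s=\sigma t$ gives
\[
\|e^{it\sigma\Delta}\varphi\|_{L^p_tL^q_x(\R\times\R^d)} = |\sigma|^{-1/p}\|e^{is\Delta}\varphi\|_{L^p_sL^q_x(\R\times\R^d)},
\]
so it suffices to prove the estimate for $\sigma=1$. Combining this reduction with an admissible-pair verification takes care of the nonzero-$\sigma$ dependence, and the constant will of course depend on $\sigma$.

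Next, for $\sigma=1$ I would apply the standard $TT^*$ argument. The two inputs are the mass conservation $\|e^{it\Delta}\varphi\|_{L^2_x}=\|\varphi\|_{L^2_x}$ and the dispersive bound $\|e^{it\Delta}\varphi\|_{L^\infty_x}\lesssim |t|^{-d/2}\|\varphi\|_{L^1_x}$ which one reads off from the explicit kernel of the free Schr\"odinger propagator. Interpolating between these two endpoints yields
\[
\|e^{it\Delta}\varphi\|_{L^q_x}\lesssim |t|^{-d(\frac12-\frac1q)}\|\varphi\|_{L^{q'}_x},
\]
and then, for non-endpoint admissible pairs, the Hardy–Littlewood–Sobolev inequality applied to the $TT^*$ operator $TT^*F(t,x)=\int e^{i(t-s)\Delta}F(s,\cdot)(x)\,ds$ produces exactly the desired $L^p_tL^q_x$ bound. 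This handles every admissible pair with $p>2$ in one stroke.

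The main obstacle, as always in Strichartz theory, is the endpoint $p=2$, $q=\frac{2d}{d-2}$ for $d\geq 3$ (which is excluded in $d=2$ by assumption). Here the Hardy–Littlewood–Sobolev step fails on the borderline, and one must invoke the Keel–Tao bilinear real-interpolation argument: decompose the $TT^*$ bilinear form dyadically in $|t-s|\sim 2^k$, establish for each dyadic piece a pair of off-diagonal bilinear bounds that are strictly better on each side of the endpoint exponent, and sum the dyadic pieces after bilinear real interpolation. I would simply cite \cite{GV85} for the non-endpoint cases and \cite{KT98} for the endpoint, since the paper only needs this proposition as a black-box tool; no new ideas beyond those references are required.
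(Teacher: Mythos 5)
Your proof is correct and follows exactly the route the paper intends: the paper gives no proof of this proposition, simply citing \cite{GV85} for the non-endpoint cases and \cite{KT98} for the endpoint, and your rescaling $s=\sigma t$ (yielding the factor $|\sigma|^{-1/p}$, harmless since $\sigma\neq 0$ is fixed) together with the standard $TT^*$/Keel--Tao machinery is precisely how one reduces to those references.
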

The Strichartz estimate implies the following. 
See the proof of Lemma\ 2.3 in \cite{GTV97}.

\begin{cor}
\label{Bo_Stri}
\notag
Let $L\in 2^{\N_0}$, $\sigma \in \R\backslash \{0\}$, 
and $(p,q)$ be admissible.
Then, we have
\begin{equation}
%\label{Stri_est_2}
\|Q_{L}^{\sigma}u\|_{L_{t}^{p}L_{x}^{q}(\R\times \R^d)}\lesssim L^{\frac{1}{2}}\|Q_{L}^{\sigma}u\|_{L^{2}_{tx}(\R\times \R^d)}
\end{equation}
for any $u \in L^{2}(\R\times \R^{d})$. 
\end{cor}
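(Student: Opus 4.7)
The plan is to use the standard transfer principle which reduces the modulation-localized estimate to the Strichartz estimate of Proposition~\ref{Stri_est} for the free evolution. Writing $v := Q_L^\sigma u$, the Fourier support of $v$ lies in the region $|\tau + \sigma|\xi|^2| \lesssim L$ (a thickened paraboloid of thickness $\sim L$), and the idea is to foliate this region by translates of the paraboloid $\{\tau = -\sigma|\xi|^2\}$, each of which carries an ordinary Strichartz estimate.

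First, I would invert the Fourier transform and change variables from $\tau$ to $\lambda := \tau + \sigma|\xi|^2$, which gives the representation
\begin{equation*}
v(t,x) = \int_{\R} e^{it\lambda}\,\bigl[e^{it\sigma\Delta} g_{\lambda}\bigr](x)\, d\lambda,
\qquad
\widehat{g_{\lambda}}(\xi) := \widetilde{v}(\lambda - \sigma|\xi|^2, \xi).
\end{equation*}
Because $\widetilde{v}$ is supported where $|\lambda| \lesssim L$, the outer integration in $\lambda$ is effectively over a set of measure $\sim L$.

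Next, I would apply Minkowski's inequality in $\lambda$ (noting that $e^{it\lambda}$ is a harmless phase in the $L^p_t L^q_x$ norm) followed by Proposition~\ref{Stri_est}:
\begin{equation*}
\|v\|_{L^p_t L^q_x} \le \int_{|\lambda|\lesssim L} \bigl\|e^{it\sigma\Delta} g_\lambda\bigr\|_{L^p_t L^q_x}\, d\lambda \lesssim \int_{|\lambda|\lesssim L} \|g_\lambda\|_{L^2_x}\, d\lambda.
\end{equation*}
Then Cauchy--Schwarz in $\lambda$ produces the factor $L^{1/2}$, and Plancherel's theorem together with the change-of-variables identity
\begin{equation*}
\int_{\R}\|g_\lambda\|_{L^2_x}^2\, d\lambda = \int_{\R}\int_{\R^d} |\widetilde{v}(\lambda-\sigma|\xi|^2,\xi)|^2\, d\xi\, d\lambda = \|v\|_{L^2_{tx}}^2
\end{equation*}
closes the estimate.

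There is no real obstacle here; the only mild point of care is justifying the change of variables and Fubini, which is routine since one may first work with a Schwartz dense subclass and then pass to the limit. The argument is essentially identical to that of Lemma~2.3 in \cite{GTV97} cited in the statement, and the proof in higher generality does not depend on the specific value of $\sigma \ne 0$ since the Strichartz estimate in Proposition~\ref{Stri_est} is uniform in $\sigma$ (up to a constant depending on $\sigma$ absorbed into $\lesssim$).
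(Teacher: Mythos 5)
Your proof is correct and is exactly the standard transfer-principle argument (foliation of the thickened paraboloid, Minkowski, Strichartz, Cauchy--Schwarz in the modulation variable, Plancherel) that the paper itself invokes by citing Lemma~2.3 of \cite{GTV97} rather than writing out a proof. No discrepancies to report.
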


We have the following bilinear Strichartz estimates.

\begin{prop}\label{L2be}
Let $d \ge 2$, $\sigma_{1}$, $\sigma_{2}\in \R \backslash \{0\}$. 
For any dyadic numbers $N_1$, $N_2$, $N_3\in 2^{\N_0}$ 
and $L_1$, $L_2\in 2^{\N_0}$, we have
\begin{equation}
%\label{L2be_est}
\notag
\begin{split}
&\|P_{N_3}(Q_{L_1}^{\sigma_1}P_{N_1}u_{1}\cdot Q_{L_2}^{\sigma_2}P_{N_2}u_{2})\|_{L^{2}_{tx}(\R\times \R^d)}\\
&\lesssim (N_{\min}^{12})^{\frac{d}{2}-1}\left(\frac{N_{\min}^{12}}{N_{\max}^{12}}\right)^{\frac{1}{2}}L_1^{\frac{1}{2}}L_2^{\frac{1}{2}}
\|Q_{L_1}^{\sigma_1}P_{N_1}u_{1}\|_{L^2_{tx}(\R\times \R^d)}\|Q_{L_2}^{\sigma_2}P_{N_2}u_{2}\|_{L^2_{tx}(\R\times \R^d)},
\end{split}
\end{equation}
where
$N_{\max}^{12}=N_1\vee N_2$ and 
$N_{\min}^{12}=N_1\wedge N_2$.
Furthermore, if $\sigma_1+\sigma_2\ne 0$, then we have
\begin{equation}
%\label{L2be_est0}
\notag
\begin{split}
&\|P_{N_3}(Q_{L_1}^{\sigma_1}P_{N_1}u_{1}\cdot Q_{L_2}^{\sigma_2}P_{N_2}u_{2})\|_{L^{2}_{tx}(\R\times \R^d)}\\
&\lesssim N_{\min}^{\frac{d}{2}-1}\left(\frac{N_{\min}}{N_{\max}}\right)^{\frac{1}{2}}L_1^{\frac{1}{2}}L_2^{\frac{1}{2}}
\|Q_{L_1}^{\sigma_1}P_{N_1}u_{1}\|_{L^2_{tx}(\R\times \R^d)}\|Q_{L_2}^{\sigma_2}P_{N_2}u_{2}\|_{L^2_{tx}(\R\times \R^d)},
\end{split}
\end{equation}
where $N_{\max}=\displaystyle \max_{1\le j\le 3}N_j$,\ 
$N_{\min}=\displaystyle \min_{1\le j\le 3}N_j$.
\end{prop}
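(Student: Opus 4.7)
The plan is to use Plancherel's theorem and the Cauchy–Schwarz inequality to reduce both estimates to a geometric volume bound on the intersection of the Fourier supports, then analyze the resulting set directly.

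Setting $v_j := Q_{L_j}^{\sigma_j}P_{N_j}u_j$, Plancherel yields
\[
\|P_{N_3}(v_1 v_2)\|_{L^2_{tx}} = \|\mathbf{1}_{|\xi|\sim N_3}(\widetilde{v_1}\ast\widetilde{v_2})\|_{L^2_{\tau,\xi}},
\]
where $\widetilde{v_j}$ is supported in $A_j := \{(\tau,\xi) : |\xi|\sim N_j,\ |\tau+\sigma_j|\xi|^2|\lesssim L_j\}$. Applying Cauchy–Schwarz to the convolution gives
\[
\|P_{N_3}(v_1 v_2)\|_{L^2_{tx}}^2 \le \sup_{(\tau,\xi):\,|\xi|\sim N_3}|E(\tau,\xi)|\cdot\|v_1\|_{L^2_{tx}}^2\|v_2\|_{L^2_{tx}}^2,
\]
with $E(\tau,\xi) := \{(\tau_1,\xi_1)\in A_1 : (\tau-\tau_1,\xi-\xi_1)\in A_2\}$. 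Both estimates therefore reduce to bounding $\sup|E|$.

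Fix $(\tau,\xi)$ with $|\xi|\sim N_3$. Integrating first over $\tau_1$ produces an interval of length $\lesssim\min(L_1,L_2)$, nonempty only when $|\tau+\Phi(\xi_1)|\lesssim L_1+L_2$, where $\Phi(\xi_1) := \sigma_1|\xi_1|^2+\sigma_2|\xi-\xi_1|^2$. Hence $|E|\lesssim\min(L_1,L_2)\,|\Omega(\tau,\xi)|$, where
\[
\Omega := \{\xi_1 : |\xi_1|\sim N_1,\ |\xi-\xi_1|\sim N_2,\ |\tau+\Phi(\xi_1)|\lesssim L_1+L_2\}.
\]
Since $\min(L_1,L_2)(L_1+L_2)\sim L_1 L_2$, both statements reduce to a geometric bound on $|\Omega|$. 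For the first estimate, foliating $\Omega$ by level sets of $\Phi$ inside the intersection of the two spherical shells of radii $\sim N_1$ and $\sim N_2$ and applying the coarea formula yields $|\Omega|\lesssim (N_{\min}^{12})^{d-2}(N_{\min}^{12}/N_{\max}^{12})(L_1+L_2)$. For the transversal case $\sigma_1+\sigma_2\ne 0$, completing the square
\[
\Phi(\xi_1) = (\sigma_1+\sigma_2)\Bigl|\xi_1-\tfrac{\sigma_2}{\sigma_1+\sigma_2}\xi\Bigr|^2 + \tfrac{\sigma_1\sigma_2}{\sigma_1+\sigma_2}|\xi|^2
\]
shows that the $\Phi$-level set is a spherical annulus centered at $c\xi$ with $|c\xi|\sim N_3$; a case analysis on the ordering of $N_1,N_2,N_3$ then refines the geometry so that $N_3$ also enters the count, yielding the sharper estimate with $N_{\max}$ in place of $N_{\max}^{12}$.

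The main obstacle is the geometric case analysis in the transversal estimate, where one has to control the mutual intersection of three spherical shells — the two frequency shells of radii $\sim N_1, \sim N_2$, and the $\Phi$-level set centered at a point of magnitude $\sim N_3$ — uniformly in $(\tau,\xi)$. The key point is that transversality forces the $\Phi$-level set to be a genuine sphere rather than a slab (as happens when $\sigma_1+\sigma_2=0$), and this geometric rigidity is precisely what brings $N_3$ into the volume bound and produces the improvement of the second estimate over the first.
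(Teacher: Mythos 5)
Your opening reduction (Plancherel plus Cauchy--Schwarz in the output variable, giving $\|P_{N_3}(v_1v_2)\|_{L^2}^2\le\sup_{(\tau,\xi)}|E(\tau,\xi)|\,\|v_1\|_{L^2}^2\|v_2\|_{L^2}^2$) is a correct inequality, and your volume count does work when $N^{12}_{\min}\ll N^{12}_{\max}$ (there $|\nabla\Phi|\sim N^{12}_{\max}$ on $\Omega$ when you parametrize by the small input frequency). But in the regime $N_3\ll N_1\sim N_2$ the quantity $\sup|E|$ is genuinely too large, so neither estimate can be obtained this way there. Concretely: (i) for the first estimate with $\sigma_2=-\sigma_1$ one has $\Phi(\xi_1)=\sigma_1(2\xi\cdot\xi_1-|\xi|^2)$, whose level sets are hyperplanes with $|\nabla\Phi|\sim|\xi|\sim N_3$, not $\sim N^{12}_{\max}$; at $\xi=0$ the phase $\Phi$ is identically zero and $|\Omega|\sim N_1^d$, far above your claimed coarea bound $N_1^{d-2}(L_1+L_2)$. (ii) For the transversal estimate with $N_3\ll N_1\sim N_2$, the sphere $\{\Phi=-\tau\}$ is centered at $\tfrac{\sigma_2}{\sigma_1+\sigma_2}\xi$, a point of size $O(N_3)$; choosing $\tau$ so that its radius is $\sim N_1$, the entire annulus of thickness $\sim L_{\max}/N_1$ sits inside both shells, so $\sup|E|\gtrsim \min(L_1,L_2)\,N_1^{d-2}L_{\max}$, whereas the second estimate requires $\min(L_1,L_2)\,N_3^{d-1}L_{\max}/N_1$ --- a loss of $(N_1/N_3)^{d-1}$. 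No ``refinement of the geometry'' of a single fiber $E(\tau,\xi)$ can repair this, because the fiber really is that large; the gain involving $N_3$ is not visible at the level of a pointwise volume bound.

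What is missing is almost orthogonality \emph{before} Cauchy--Schwarz, which is how the proof referenced by the paper (Lemma~1 of \cite{CDKS01}, Lemma~3.1 of \cite{Hi}) proceeds: split into cases according to which of $N_1,N_2,N_3$ is smallest. When $N_3\ll N_1\sim N_2$ and $\sigma_1+\sigma_2\ne0$, decompose the supports of $\widehat{u_1},\widehat{u_2}$ into cubes of side $\sim N_3$; for output frequency in a fixed $N_3$-cube only $O(1)$ pairs of input cubes interact, and within such a pair one applies Cauchy--Schwarz together with the change of variables $(\tau_1,\tau_2,\xi_1,\xi_2)\mapsto(c_1,c_2,\tau,\xi,\eta)$ whose Jacobian $2|\sigma_1\xi_1^{(1)}-\sigma_2\xi_2^{(1)}|\sim N_1$ encodes the transversality, while $\eta$ is confined to a cube of side $N_3$; this yields the fiber bound $N_3^{d-1}L_1L_2/N_1$ cube by cube. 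For the first estimate in this same regime (no transversality available), volume counting is the wrong mechanism altogether: one uses H\"older, the Strichartz bound $\|v_i\|_{L^4_tL^{2d/(d-1)}_x}\lesssim L_i^{1/2}\|v_i\|_{L^2}$ from Corollary~\ref{Bo_Stri}, and Bernstein at the output frequency $N_3\le N^{12}_{\min}$, which gives the factor $N_3^{\frac{d}{2}-1}\le (N^{12}_{\min})^{\frac{d}{2}-1}$. You should restructure your argument as this case analysis; as written, the central volume claims are false in the case that actually matters for the application.
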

Proposition~\ref{L2be} can be obtained by the same way as 
in the proof of Lemma\ 1 in \cite{CDKS01}.
See also Lemma 3.1 in \cite{Hi}.

An interpolation argument yields the following.
Since the proof is the same as that of Corollary~2.5 in \cite{HK},
we omit the details here.
\begin{cor}\label{L2be_2}
Let $d \ge 2$, $b'\in (\frac{1}{4},\frac{1}{2})$, 
and 
$\sigma_{1}$, $\sigma_{2}\in \R \backslash \{0\}$. 
We set $\delta =\frac{1}{2}-b'$. 
For any dyadic numbers $N_1$, $N_2$, $N_3\in 2^{\N_0}$ 
and $L_1$, $L_2\in 2^{\N_0}$, we have
\begin{equation}\label{L2be_est_2}
\begin{split}
&\|P_{N_3}(Q_{L_1}^{\sigma_1}P_{N_1}u_{1}\cdot Q_{L_2}^{\sigma_2}P_{N_2}u_{2})\|_{L^{2}_{tx}(\R\times \R^d)}\\
&\lesssim 
(N_{\min}^{12})^{\frac{d}{2}-1+4 \delta}
\left(\frac{N_{\min}^{12}}{N_{\max}^{12}}\right)^{\frac{1}{2}- 2\delta}L_1^{b'}L_2^{b'}
\|Q_{L_1}^{\sigma_1}P_{N_1}u_{1}\|_{L^2_{tx}(\R\times \R^d)}\|Q_{L_2}^{\sigma_2}P_{N_2}u_{2}\|_{L^2_{tx}(\R\times \R^d)}.
\end{split}
\end{equation}
Furthermore, if $\sigma_1+\sigma_2\ne 0$, then we have
\begin{equation}\label{L2be_est_20}
\begin{split}
&\|P_{N_3}(Q_{L_1}^{\sigma_1}P_{N_1}u_{1}\cdot Q_{L_2}^{\sigma_2}P_{N_2}u_{2})\|_{L^{2}_{tx}(\R\times \R^d)}\\
&\lesssim 
N_{\min}^{\frac{d}{2}-1+4 \delta}
\left(\frac{N_{\min}}{N_{\max}}\right)^{\frac{1}{2}- 2\delta}L_1^{b'}L_2^{b'}
\|Q_{L_1}^{\sigma_1}P_{N_1}u_{1}\|_{L^2_{tx}(\R\times \R^d)}\|Q_{L_2}^{\sigma_2}P_{N_2}u_{2}\|_{L^2_{tx}(\R\times \R^d)}.
\end{split}
\end{equation}
\end{cor}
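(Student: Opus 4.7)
The plan is to obtain both \eqref{L2be_est_2} and \eqref{L2be_est_20} by taking a weighted geometric mean of Proposition~\ref{L2be} with two elementary H\"older--Bernstein bounds. Without loss of generality assume $N_1 \le N_2$, so $N_{\min}^{12} = N_1$ and $N_{\max}^{12} = N_2$.

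First, I would establish two crude auxiliary bounds of the form
\[
\|P_{N_3}(Q_{L_1}^{\sigma_1}P_{N_1}u_1 \cdot Q_{L_2}^{\sigma_2}P_{N_2}u_2)\|_{L^2_{tx}} \lesssim N_1^{\frac{d}{2}} L_j^{\frac{1}{2}} \|Q_{L_1}^{\sigma_1}P_{N_1}u_1\|_{L^2_{tx}}\|Q_{L_2}^{\sigma_2}P_{N_2}u_2\|_{L^2_{tx}}
\]
for $j = 1$ and $j = 2$. The $j=2$ version follows from H\"older's inequality $\|fg\|_{L^2_{tx}} \le \|f\|_{L^2_t L^\infty_x}\|g\|_{L^\infty_t L^2_x}$, Bernstein in $x$ on the lower-frequency factor, and the Sobolev-type bound $\|Q_L^\sigma u\|_{L^\infty_t L^2_x} \lesssim L^{1/2}\|Q_L^\sigma u\|_{L^2_{tx}}$ on the higher-frequency factor. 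The $j=1$ version follows from $\|fg\|_{L^2_{tx}} \le \|f\|_{L^\infty_{tx}}\|g\|_{L^2_{tx}}$ combined with space-time Bernstein applied to $Q_{L_1}^{\sigma_1}P_{N_1}u_1$, yielding the factor $L_1^{1/2} N_1^{d/2}$. Both estimates are \emph{asymmetric} in the sense that the Bernstein-in-$x$ factor is always placed on the lower-frequency function $u_1$.

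The main step is then an elementary geometric-mean interpolation. Given three bounds $\|X\|_{L^2_{tx}} \le A_k \|u_1\|_{L^2_{tx}}\|u_2\|_{L^2_{tx}}$ ($k = A, B, C$, with $A$ from Proposition~\ref{L2be} and $B, C$ the two auxiliary bounds above) and nonnegative weights with $\theta_A + \theta_B + \theta_C = 1$, raising each to $\theta_k$ and multiplying gives $\|X\|_{L^2_{tx}} \le A^{\theta_A} B^{\theta_B} C^{\theta_C}\|u_1\|_{L^2_{tx}}\|u_2\|_{L^2_{tx}}$. Choosing $\theta_A = 1 - 4\delta$ and $\theta_B = \theta_C = 2\delta$, which is admissible since $\delta = \frac{1}{2} - b' \in (0, \frac{1}{4})$, the exponent of each $L_i$ becomes $\frac{1-4\delta}{2} + \frac{2\delta}{2} = \frac{1}{2} - \delta = b'$, and a direct arithmetic check shows the $N$-exponents collapse exactly to $(N_{\min}^{12})^{\frac{d}{2}-1+4\delta}(N_{\min}^{12}/N_{\max}^{12})^{\frac{1}{2}-2\delta}$, matching \eqref{L2be_est_2}.

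For \eqref{L2be_est_20}, the same interpolation works with $A$ replaced by the sharper transversal version of Proposition~\ref{L2be} valid when $\sigma_1 + \sigma_2 \ne 0$; the auxiliary bounds $B, C$ depend only on $N_1, N_2$ and are unchanged. The only delicate point is the choice of asymmetric trivial estimates above — a symmetric variant such as $\|fg\|_{L^2_{tx}} \le \|f\|_{L^4_{tx}}\|g\|_{L^4_{tx}}$ would leave an unwanted $N_{\max}^{12}$-factor and fail to match the stated $N$-exponents. Once the asymmetry is in place, the remainder of the proof is a mechanical exponent computation.
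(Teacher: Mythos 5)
Your argument for \eqref{L2be_est_2} is correct and is essentially the paper's own route: the paper also obtains the corollary by taking a geometric mean of Proposition~\ref{L2be} with a crude H\"older--Bernstein bound (it uses the single symmetric estimate $N_{\min}^{d/2}L_1^{1/4}L_2^{1/4}$ with weight $4\delta$, which is exactly the product of your two asymmetric bounds each raised to the power $\tfrac12$, so the exponent bookkeeping is identical).

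There is, however, a genuine gap in your treatment of \eqref{L2be_est_20}. There $N_{\min}=\min_{1\le j\le 3}N_j$ may equal $N_3$, and the problematic regime is $N_3\ll N_1\sim N_2$. Your auxiliary bounds carry the factor $N_1^{d/2}=(N_{\min}^{12})^{d/2}$, not $N_{\min}^{d/2}=N_3^{d/2}$, because Bernstein is applied on the full frequency ball of radius $\sim N_1$. Interpolating with weights $1-4\delta,2\delta,2\delta$ then yields
\begin{equation*}
N_3^{(\frac{d}{2}-1)(1-4\delta)}\Bigl(\frac{N_3}{N_1}\Bigr)^{\frac12-2\delta}N_1^{2d\delta}
=\Bigl(\frac{N_1}{N_3}\Bigr)^{2d\delta}\cdot N_3^{\frac{d}{2}-1+4\delta}\Bigl(\frac{N_3}{N_1}\Bigr)^{\frac12-2\delta},
\end{equation*}
which misses the claimed bound by the unbounded factor $(N_1/N_3)^{2d\delta}$. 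Your closing remark that ``the auxiliary bounds depend only on $N_1,N_2$ and are unchanged'' is precisely where the argument breaks: for \eqref{L2be_est_20} the auxiliary estimate must also see $N_3$. The fix (and what the paper does) is to first perform the almost-orthogonal decomposition of $\supp\F_{tx}u_1$ and $\supp\F_{tx}u_2$ into cubes of side $\sim N_{\min}$, paired via the output frequency constraint $|\xi_1+\xi_2|\sim N_3$ exactly as in the proof of the transversal case of Proposition~\ref{L2be}; applying Bernstein on a cube of side $N_{\min}$ then upgrades the auxiliary bound to $N_{\min}^{d/2}L_1^{1/4}L_2^{1/4}$ (equivalently, your two bounds with $N_{\min}^{d/2}$ in place of $N_1^{d/2}$), after which your interpolation closes.
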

%
\begin{comment}
%
\begin{proof}
The desired estimate is obtained by the interpolation between (\ref{L2be_est}) and the following bilinear estimate:
\begin{equation}\label{L2be_est_3}
\begin{split}
&\|P_{N_3}(Q_{L_1}^{\sigma_1}P_{N_1}u_{1}\cdot Q_{L_2}^{\sigma_2}P_{N_2}u_{2})\|_{L^{2}}\lesssim N_{\min}^{\frac{d}{2}}L_1^{\frac{1}{4}}L_2^{\frac{1}{4}}
\|Q_{L_1}^{\sigma_1}P_{N_1}u_{1}\|_{L^2}
\|Q_{L_2}^{\sigma_1}P_{N_2}u_{2}\|_{L^2}.
\end{split}
\end{equation}
Therefore, we only need to show (\ref{L2be_est_3}). 
By the same argument as in the proof of Proposition \ref{L2be}, we may assume that 
$\supp \F_{tx} u_1$ and $\supp \F_{tx} u_2$ are contained in the cubes $B_k$ and $B_{j(k)}$, respectively. 
Here the cubes $\{ B_k \}_k$ denote the decomposition of $\R^2$ with width $2 N_{\min}$. 
By the H\"older inequality, the Bernstein inequality, 
and the Strichartz estimate (\ref{Stri_est_2}) with 
$(p,q)=(\infty ,2)$, we have
\begin{equation*}
\begin{split}
&\|P_{N_3}(Q_{L_1}^{\sigma_1}P_{N_1}u_{1}\cdot Q_{L_2}^{\sigma_2}P_{N_2}u_{2})\|_{L^{2}}\\
&\lesssim \|Q_{L_1}^{\sigma_1}P_{N_1}u_{1}\|_{L^2}^{\frac{1}{2}}
\|Q_{L_1}^{\sigma_1}P_{N_1}u_{1}\|_{L^{\infty}}^{\frac{1}{2}}
\|Q_{L_2}^{\sigma_1}P_{N_2}u_{2}\|_{L^2}^{\frac{1}{2}}
\|Q_{L_2}^{\sigma_1}P_{N_2}u_{2}\|_{L^{\infty}}^{\frac{1}{2}}\\
&\lesssim N_{\min}^{\frac{d}{2}} L_1^{\frac{1}{4}}L_2^{\frac{1}{4}}
\|Q_{L_1}^{\sigma_1}P_{N_1}u_{1}\|_{L^2}
\|Q_{L_2}^{\sigma_1}P_{N_2}u_{2}\|_{L^2}
\end{split}
\end{equation*}
which completes the proof of \eqref{L2be_est_3}.
\end{proof}
\end{comment}
%
%

Next, we consider 
the low modulation case 
$L_{\textnormal{max}}:=\displaystyle \max_{1\le j\le 3}L_j \ll N_{\max}^2$. 
\begin{lemm}\label{modul_est_2}
Let $\sigma \in \R \setminus \{0,\pm 1\}$. 
We assume that $(\tau_{1},\xi_{1})$, $(\tau_{2}, \xi_{2})$, $(\tau_{3}, \xi_{3})\in \R\times \R^{d}$ satisfy $\tau_{1}+\tau_{2}+\tau_{3}=0$, $\xi_{1}+\xi_{2}+\xi_{3}=0$. 
If it holds that
\[
\max\{|\tau_{1}+|\xi_{1}|^{2}|,|\tau_{2}-|\xi_{2}|^{2}|,|\tau_{3}-\sigma |\xi_{3}|^{2}|\}
\ll \max_{1\leq j\leq 3}|\xi_{j}|^{2},
\]
then we have
\begin{equation*}
|\xi_1| \sim |\xi_2| \gtrsim |\xi_3|.
\end{equation*}
\end{lemm}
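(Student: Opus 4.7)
The plan is to derive the statement from the standard resonance identity obtained by summing the three linear symbols. Adding $\tau_1+|\xi_1|^2$, $\tau_2-|\xi_2|^2$, and $\tau_3-\sigma|\xi_3|^2$ and using the hypotheses $\tau_1+\tau_2+\tau_3=0$ and $\xi_1+\xi_2+\xi_3=0$, I obtain
\begin{equation*}
\bigl||\xi_1|^2-|\xi_2|^2-\sigma|\xi_3|^2\bigr|\ll \max_{1\le j\le 3}|\xi_j|^2.
\end{equation*}
Substituting $\xi_3=-\xi_1-\xi_2$ expands the left-hand side to
\begin{equation*}
(1-\sigma)|\xi_1|^2-(1+\sigma)|\xi_2|^2-2\sigma\,\xi_1\cdot\xi_2,
\end{equation*}
so the crux becomes showing that $|\xi_1|\sim|\xi_2|$; once this holds, the triangle inequality $|\xi_3|\le|\xi_1|+|\xi_2|$ gives $|\xi_3|\lesssim|\xi_1|$ for free.

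Because $\xi_1+\xi_2+\xi_3=0$, the two largest of $|\xi_1|,|\xi_2|,|\xi_3|$ are automatically comparable to $N_{\max}:=\max_j|\xi_j|$. Hence the only scenarios incompatible with the desired conclusion are (i) $|\xi_1|\ll|\xi_2|\sim|\xi_3|\sim N_{\max}$ and (ii) $|\xi_2|\ll|\xi_1|\sim|\xi_3|\sim N_{\max}$. I will exclude each via the displayed resonance identity.

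In case (i), the terms $(1-\sigma)|\xi_1|^2$ and $2\sigma\,\xi_1\cdot\xi_2$ are bounded by $O(|\xi_1||\xi_2|)=o(N_{\max}^2)$, so the identity forces $|(1+\sigma)|\xi_2|^2|\ll N_{\max}^2\sim|\xi_2|^2$, i.e.\ $\sigma=-1$; this contradicts the hypothesis $\sigma\ne -1$. Case (ii) is symmetric: the dominant contribution is $(1-\sigma)|\xi_1|^2$, which forces $\sigma=1$ and again contradicts the hypothesis. Thus $|\xi_1|\sim|\xi_2|$, and combined with the triangle inequality this yields $|\xi_1|\sim|\xi_2|\gtrsim|\xi_3|$, as claimed.

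No step is genuinely delicate; the only thing to watch is making the "$\ll$" quantitative enough to absorb the implicit constants coming from $\sigma$. Concretely, I will fix a small absolute constant (depending on $\sigma$ through $|1-\sigma|$ and $|1+\sigma|$, both nonzero by assumption) so that the above dichotomy produces an actual contradiction rather than just a weak inequality. This is the mild technical point, but there is no real obstacle.
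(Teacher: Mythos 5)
Your proof is correct and follows essentially the same route as the paper: both reduce the hypothesis to the resonance bound $\bigl||\xi_1|^2-|\xi_2|^2-\sigma|\xi_3|^2\bigr|\ll\max_j|\xi_j|^2$ and then show this is violated (using $\sigma\ne\pm1$) in every configuration other than $|\xi_1|\sim|\xi_2|\gtrsim|\xi_3|$. The only cosmetic difference is that you bound the cross term $2\sigma\,\xi_1\cdot\xi_2$ directly after reducing to two cases via the triangle inequality, whereas the paper completes the square in three different ways; the substance is identical.
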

\begin{proof}
We set
\[
\Phi (\xi_1, \xi_2, \xi_3):= |\xi_1|^2-|\xi_2|^2-\sigma |\xi_3|^2. 
\]
Then,  we have
\[
\begin{split}
\Phi(\xi_1,\xi_2,\xi_3) 
&=(1-\sigma)|\xi_1|^2-2\sigma\xi_1\cdot \xi_2 -(1+\sigma)|\xi_2|^2\\
&=(1-\sigma)\left|\xi_1-\frac{\sigma}{1-\sigma}\xi_2\right|^2-\frac{1}{1-\sigma}|\xi_2|^2\\
&=-(1+\sigma)\left|\xi_2+\frac{\sigma}{1+\sigma}\xi_1\right|^2+\frac{1}{1+\sigma}|\xi_1|^2
\end{split}
\]
and
\[
\begin{split}
\Phi(\xi_1,\xi_2,\xi_3) 
&=2\xi_2\cdot \xi_3+(1-\sigma)|\xi_3|^2\\
&=(1-\sigma)\left|\xi_3+\frac{1}{1-\sigma}\xi_2\right|^2-\frac{1}{1-\sigma}|\xi_2|^2.
\end{split}
\]
Therefore, if it holds $|\xi_1|\gg |\xi_2|$ or
$|\xi_1|\ll |\xi_2|$ or $|\xi_3|\gg |\xi_2|$, 
then we have 
\[
\max
\{|\tau_{1}+|\xi_{1}|^{2}|,|\tau_{2}-|\xi_{2}|^{2}|,|\tau_{3}-c|\xi_{3}|^{2}|\}
\gtrsim 
|\Phi(\xi_1,\xi_2,\xi_3)|\sim \max_{1\leq j\leq 3}|\xi_{j}|^{2}. 
\]
It implies the conclusion.
\end{proof}

Lemma \ref{modul_est_2} is different from the case $\kappa \neq 0$,
where $\kappa$ is as in \eqref{coeff}.
More precisely, $|\xi_2| \gg |\xi_3|$ occurs in this case,
while we have $|\xi_1| \sim |\xi_2| \sim |\xi_3|$ if $\kappa \neq 0$ (see Lemma 4.1 in \cite{Hi}).
\section{Proof of bilinear estimates for $d=2$\label{be_for_2d}}
%
%%%%%%%%%%%%%%%%%%%%%%%%%%%%%%%%%%%%%%%%%%%%%%%%%%%%%%%%%%%%%%%%%%%%%%%%%%%%%%%%%
%%%%%%%%%%%%%%%%%%%%%%%%%%%%%%%%%%%%%%%%%%%%%%%%%%%%%%%%%%%%%%%%%%%%%%%%%%%%%%%%%
%%%%%%%%%%%%%%%%%%%%%%%%%%%%%%%%%%%%  Section 3  %%%%%%%%%%%%%%%%%%%%%%%%%%%%%%%%%%
%%%%%%%%%%%%%%%%%%%%%%%%%%%%%%%%%%%%%%%%%%%%%%%%%%%%%%%%%%%%%%%%%%%%%%%%%%%%%%%%%
%%%%%%%%%%%%%%%%%%%%%%%%%%%%%%%%%%%%%%%%%%%%%%%%%%%%%%%%%%%%%%%%%%%%%%%%%%%%%%%%%
%
In this section, 
we prove Proposition~\ref{key_be} for $d=2$.
To treat the low modulation interaction, 
we first introduce the angular frequency localization operators which were utilized in \cite{BHHT09}.
\begin{defn}[\cite{BHHT09}]
We define the angular decomposition of $\R^2$ in frequency.
We define a partition of unity in $\R$,
\begin{equation*}
1 = \sum_{j \in \Z} \omega_j, \qquad \omega_j (s) = \psi(s-j) \left( \sum_{k \in \Z} \psi (s-k) \right)^{-1}. 
\end{equation*}
For a dyadic number $A \geq 64$, we also define a partition of unity on the unit circle,
\begin{equation*}
1 = \sum_{j =0}^{A-1} \omega_j^A, \qquad \omega_j^A (\vartheta) = 
\omega_j \left( \frac{A\vartheta}{\pi} \right) + \omega_{j-A} \left( \frac{A\vartheta}{\pi} \right).
\end{equation*}
We observe that $\omega_j^A$ is supported in 
\begin{equation*}
\Theta_j^A = \left[\frac{\pi}{A} \, (j-2), \ \frac{\pi}{A} \, (j+2) \right] 
\cup \left[-\pi + \frac{\pi}{A} \, (j-2), \ - \pi +\frac{\pi}{A} \, (j+2) \right].
\end{equation*}
We now define the angular frequency localization operators $R_j^A$,
\begin{equation*}
\F_x (R_j^A f)(\xi) = \omega_j^A(\vartheta) \F_x f(\xi), \qquad \textnormal{where} \ \xi = |\xi| 
(\cos \vartheta, \sin \vartheta).
\end{equation*}
For any function $u  : \, \R \, \times \, \R^2 \, \to \C$,
% $(t,x) \mapsto u(t,x)$
we set 
$(R_j^A u ) (t, x) = (R_j^Au( t, \cdot)) (x)$. This operator localizes function in frequency to the set
\begin{equation}
{\mathfrak{D}}_j^A = \{ (\tau, |\xi| \cos \vartheta, |\xi| \sin \vartheta) \in \R \times \R^2 
\, | \, \vartheta \in \Theta_j^A  \} .
\label{DjA}
\end{equation}
Immediately, we can see
\begin{equation*}
u = \sum_{j=0}^{A-1} R_j^A u.
\end{equation*}
\end{defn}

The following propositions play an important role 
in the proof of Proposition~\ref{key_be}.

\begin{prop}\label{BSE_HHL}
Let $N_1$, $N_2$, $N_3$, $L_1$, $L_2$, $L_3$, $A\in 2^{\N_0}$, and
$j_1$, $j_2\in \{0,1,\dots ,A-1\}$.
We assume $A\ge 64$, $|j_1-j_2|\lesssim 1$, and $N_3\lesssim N_1\sim N_2$. 
Then,  
we have the following estimate:
\begin{align}
\begin{split}
&\|P_{N_3}(R_{j_1}^AQ_{L_1}^{1}P_{N_1}u_1\cdot R_{j_2}^AQ_{L_2}^{-1}P_{N_2}u_2)\|_{L^2_{tx}(\R\times \R^2)}\\
&\hspace{3ex}
\lesssim \left(\frac{N_1}{N_3A}\right)^{\frac{1}{2}}
L_1^{\frac{1}{2}}L_2^{\frac{1}{2}}
\|R_{j_1}^AQ_{L_1}^{1}P_{N_1}u_1\|_{L^2_{tx}(\R\times \R^2)}
\|R_{j_2}^AQ_{L_2}^{-1}P_{N_2}u_2\|_{L^2_{tx}(\R\times \R^2)}.
\end{split} \label{ABSE1}
\end{align}
\end{prop}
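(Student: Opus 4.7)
The plan is a standard Plancherel/Cauchy--Schwarz reduction to a pointwise geometric estimate, in which the angular localization produces the gain $A^{-1/2}$. By Plancherel's theorem and Cauchy--Schwarz applied to the convolution in the Fourier variables, the estimate reduces to showing
\[
\sup_{(\tau,\xi_3)\,:\,|\xi_3|\sim N_3}\,|E(\tau,\xi_3)| \;\lesssim\; \frac{N_1\,L_1\,L_2}{N_3\,A},
\]
where $E(\tau,\xi_3)\subset\R\times\R^2$ is the set of $(\tau_1,\xi_1)$ for which $|\xi_1|\sim N_1$ with $\xi_1/|\xi_1|\in\Theta_{j_1}^A$, $|\xi_3-\xi_1|\sim N_2$ with $(\xi_3-\xi_1)/|\xi_3-\xi_1|\in\Theta_{j_2}^A$, and the two modulation constraints $|\tau_1+|\xi_1|^2|\lesssim L_1$ and $|\tau-\tau_1-|\xi_3-\xi_1|^2|\lesssim L_2$ both hold.

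For each fixed $\xi_1$, the $\tau_1$-slice of $E$ is an intersection of two intervals of lengths $\sim L_1$ and $\sim L_2$; hence it has measure $\lesssim\min(L_1,L_2)$, and is nonempty only when the resonance function
\[
\Phi(\xi_1,\xi_3) := \tau+|\xi_1|^2-|\xi_3-\xi_1|^2 = \tau-|\xi_3|^2+2\xi_1\cdot\xi_3
\]
satisfies $|\Phi|\lesssim L_1+L_2$. I would then split on the size of $N_3$ relative to $N_1/A$.

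When $N_3\gg N_1/A$, I would rotate the spatial frame so that the line of direction $j_1\pi/A$ becomes the $x$-axis and write $\xi_1=(a,b)$, $\xi_3=(c,d)$. The angular localization of $\xi_1$ gives $|b|\lesssim N_1/A$; since $|j_1-j_2|\lesssim 1$, the same type of bound holds (in the same frame) for the second component of $\xi_2=\xi_3-\xi_1$, i.e.\ $|d-b|\lesssim N_1/A$. Consequently $|d|\lesssim N_1/A\ll N_3$, which combined with $|\xi_3|^2=c^2+d^2\sim N_3^2$ forces $|c|\sim N_3$. Now $\Phi$ is affine in $a$ with leading coefficient $2c$, so for each fixed $b$ the resonance confines $a$ to an interval of length $\lesssim(L_1+L_2)/N_3$. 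Multiplying the $b$-range ($\lesssim N_1/A$), the $a$-range, and the $\tau_1$-slice bound gives
\[
|E|\;\lesssim\;\frac{N_1}{A}\cdot\frac{L_1+L_2}{N_3}\cdot\min(L_1,L_2)\;\sim\;\frac{N_1\,L_1\,L_2}{N_3\,A},
\]
which is the desired bound.

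When $N_3\lesssim N_1/A$, the prefactor $(N_1/(N_3A))^{1/2}$ is bounded below by a positive constant, so the coarse bilinear estimate from Proposition~\ref{L2be} (which for $d=2$ and $N_1\sim N_2$ yields $\|P_{N_3}(u_1 u_2)\|_{L^2_{tx}}\lesssim L_1^{1/2}L_2^{1/2}\|u_1\|_{L^2_{tx}}\|u_2\|_{L^2_{tx}}$) already dominates the right-hand side, and the angular structure need not be exploited. The main obstacle is the first case: one has to use \emph{both} angular localizations simultaneously, crucially via $|j_1-j_2|\lesssim 1$, in order to control the tangential component of $\xi_3$; it is exactly this control that turns the resonance into a clean slab of width $(L_1+L_2)/N_3$ in the radial variable $a$ and thereby produces the $A^{-1/2}$ gain matching the claimed bound.
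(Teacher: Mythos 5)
Your proposal is correct and follows essentially the same route as the paper's proof: the same dichotomy ($A\lesssim N_1/N_3$ handled by the crude $L^1L^{\frac12}L^{\frac12}$-type bilinear bound, $A\gg N_1/N_3$ handled by a Cauchy--Schwarz fiber-measure estimate), the same rotation aligning the cap direction with a coordinate axis, the same observation that both tangential components are $O(N_1/A)$ so the output frequency is essentially radial with $|c|\sim N_3$, and the same gain of $(L_1+L_2)/N_3$ in the radial variable from the resonance function. The paper organizes the measure computation as a change of variables with Jacobian $|\widetilde\xi_1^{(1)}+\widetilde\xi_2^{(1)}|\sim N_3$ rather than your direct Fubini slicing, but the content is identical.
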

\begin{prop}\label{BSE_HHL_2}
Let $\sigma \in \R\backslash \{0,\pm 1\}$.
Let $N_1$, $N_2$, $N_3$, $L_1$, $L_2$, $L_3$, $A\in 2^{\N_0}$, and
$j_1$, $j_2\in \{0,1,\dots ,A-1\}$.
We assume $L_{\textnormal{max}} \ll N_{\max}^2$, $A\ge 64$, and $|j_1-j_2|\le 32$. 
Then,  
we have the following estimate:
\begin{align}
\begin{split}
&\|R_{j_1}^AQ_{L_1}^{-1}P_{N_1}(R_{j_2}^AQ_{L_2}^{-1}P_{N_2}u_2\cdot Q_{L_3}^{-\sigma}P_{N_3}u_3)\|_{L^2_{tx}(\R\times \R^2)}\\
&\hspace{3ex}
\lesssim A^{-\frac{1}{2}}
L_2^{\frac{1}{2}}L_3^{\frac{1}{2}}
\|R_{j_2}^AQ_{L_2}^{-1}P_{N_2}u_2\|_{L^2_{tx}(\R\times \R^2)}
\|Q_{L_3}^{-\sigma}P_{N_3}u_3\|_{L^2_{tx}(\R\times \R^2)}.
\end{split}\label{ABSE2}
%\\
%\begin{split}
%&\|R_{j_2}^AQ_{L_2}^1P_{N_2}(R_{j_1}^AQ_{L_1}^{1}P_{N_1}u_1\cdot Q_{L_3}^{-\sigma}P_{N_3}u_3)\|_{L^2_{tx}(\R\times \R^2)}\\
%&\hspace{3ex}
%\lesssim A^{-\frac{1}{2}}
%L_1^{\frac{1}{2}}L_3^{\frac{1}{2}}
%\|R_{j_1}^AQ_{L_1}^{1}P_{N_1}u_1\|_{L^2_{tx}(\R\times \R^2)}
%\|Q_{L_3}^{-\sigma}P_{N_3}u_3\|_{L^2_{tx}(\R\times \R^2)}.
%\end{split}\label{ABSE3}
\end{align}
\end{prop}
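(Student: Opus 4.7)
By the duality argument of Remark~\ref{duality}, it suffices to bound the trilinear integral
\[
J := \iiiint f_1(\tau_1,\xi_1)\,f_2(\tau_2,\xi_2)\,f_3(\tau_1-\tau_2,\xi_1-\xi_2)\,d\tau_2\,d\xi_2\,d\tau_1\,d\xi_1
\]
for nonnegative $L^2$-functions $f_1,f_2,f_3$ supported in the frequency, angular, and modulation regions dictated by the three factors. The Cauchy--Schwarz inequality, applied first in $(\tau_1,\xi_1)$ and then in the inner convolution, gives
\[
|J|\le \bigl(\sup_{(\tau_1,\xi_1)\in \supp f_1}|E(\tau_1,\xi_1)|\bigr)^{1/2}\,\|f_1\|_{L^2}\|f_2\|_{L^2}\|f_3\|_{L^2},
\]
where $E(\tau_1,\xi_1)$ denotes the admissible set of $(\tau_2,\xi_2)$. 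The target reduces to showing $\sup|E|\lesssim A^{-1}L_2L_3$.

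The $\tau_2$-slice of $E$ is the intersection of two intervals of lengths $\lesssim L_2$ and $\lesssim L_3$, hence has length $\lesssim \min(L_2,L_3)$, and its nonemptiness forces
\[
|\xi_2|^2+\sigma|\xi_1-\xi_2|^2 = \tau_1+O(L_2+L_3),
\]
which, combined with $\tau_1 = |\xi_1|^2+O(L_1)$, rearranges to $|\Phi(\xi_1,\xi_2,\xi_1-\xi_2)| \lesssim L_{\textnormal{max}}$ in the notation of Lemma~\ref{modul_est_2}. The low-modulation hypothesis and that lemma then give $N_1\sim N_2\gtrsim N_3$. Using the angular proximity $|j_1-j_2|\le 32$, I would introduce coordinates adapted to the sector $\Theta_{j_2}^A$: write $\xi_2 = r\hat e+s\hat e^\perp$ with $r\sim N_2$, $|s|\lesssim N_2/A$, and $\xi_1 = a\hat e+b\hat e^\perp$ with $|a|\sim N_1$, $|b|\lesssim N_1/A$. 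Since $\sigma\ne -1$,
\[
F(r,s) := r^2+s^2+\sigma\bigl((a-r)^2+(b-s)^2\bigr) = (1+\sigma)\bigl((r-r_*)^2+(s-s_*)^2\bigr)+F_{*}
\]
with $(r_*,s_*)=\tfrac{\sigma}{1+\sigma}(a,b)$ is a non-degenerate quadratic. Fixing $\tau_1$ (with effective uncertainty $O(L_2+L_3)$) places $\xi_2$ on the fuzzy circle of radius $\rho \sim N_1/|1+\sigma|$ centered at $(r_*,s_*)$; the value of $\rho$ is obtained by substituting $\tau_1 = |\xi_1|^2+O(L_1)$.

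The main obstacle is estimating the intersection of this fuzzy circle with the angular slab $|s|\lesssim N_2/A$. I would split into two regimes: \emph{(i)} if $\rho\gtrsim N_2/A$, equivalently $A\gtrsim |1+\sigma|$, the slab cuts the circle in two arcs of total length $\lesssim N_2/A$ and annular thickness $\sim (L_2+L_3)/(|1+\sigma|\rho)\sim (L_2+L_3)/N_2$, producing area $\lesssim (L_2+L_3)/A$; \emph{(ii)} if $\rho\ll N_2/A$, the entire fuzzy circle lies inside the slab and its total area is $\sim (L_2+L_3)/|1+\sigma|\lesssim (L_2+L_3)/A$. Multiplying by the $\tau_2$-slice length $\min(L_2,L_3)$ yields $\sup|E|\lesssim L_2L_3/A$, as required. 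The most delicate step is the transversality analysis near the critical point $(r_*,s_*)$, where the radial derivative of $F$ degenerates; precisely the assumption $\sigma\ne -1$ keeps $F$ non-degenerate and $\rho$ bounded away from zero, and the value of $\rho$ conspires with the slab width $N_2/A$ so that both regimes deliver the same gain $A^{-1}$.
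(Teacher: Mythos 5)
Your argument is correct and follows the same skeleton as the paper's proof: Cauchy--Schwarz reduces everything to $\sup_{(\tau_1,\xi_1)}|E(\tau_1,\xi_1)|\lesssim A^{-1}L_2L_3$, the $\tau_2$-fibre contributes $\min(L_2,L_3)$, and the remaining task is to show that the $\xi_2$-projection of $E$ has measure $\lesssim A^{-1}\max(L_2,L_3)$. Where you differ is only in this last measure estimate: the paper works in polar coordinates $(|\xi_2|,\theta_2)$, so the angular sector is exactly an interval of length $\sim A^{-1}$, and Lemma~\ref{element1} applied to the quadratic \eqref{AP1} in $|\xi_2|$ (non-degenerate thanks to \eqref{AP2}) confines $|\xi_2|$ to a set of radial measure $O(\max(L_2,L_3)/N_1)$; you instead compute the area of the intersection of the fuzzy circle centred at $\tfrac{\sigma}{1+\sigma}\xi_1$ with the Cartesian slab $|s|\lesssim N_2/A$. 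The two computations are equivalent, but your case (i) claim that ``the slab cuts the circle in two arcs of total length $\lesssim N_2/A$'' is \emph{not} true for a generic circle of radius $\rho\gg N_2/A$ meeting such a slab: a tangential intersection produces arcs of length $\sim\sqrt{\rho N_2/A}\gg N_2/A$. It holds here only because $|j_1-j_2|\le 32$ forces the centre $(r_*,s_*)=\tfrac{\sigma}{1+\sigma}(a,b)$ to lie within $O_\sigma(N_1/A)$ of the slab's axis (this is your $|b|\lesssim N_1/A$), so the slab passes near the centre and cuts the circle transversally near $\phi=0,\pi$; you should invoke this explicitly, since it --- rather than the degeneracy of $F$ at its critical point, which sits at distance $\rho$ from the annulus and is harmless --- is the genuinely delicate point. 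Avoiding this tangency discussion altogether is precisely what the paper's polar-coordinate factorization buys.
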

\begin{prop}\label{thm2.6}
Let $\sigma \in \R\backslash \{0,\pm 1\}$.
Let $N_1$, $N_2$, $N_3$, $L_1$, $L_2$, $L_3$, $A\in 2^{\N_0}$, 
and $j_1$, $j_2\in \{0,1,\dots ,A-1\}$. 
We assume $L_{\textnormal{max}} \ll N_{\max}^2$, 
$A\ge 64$, and 
$16 \leq |j_1 - j_2 |\leq 32$. 
Then the following estimate holds:
\begin{equation}
\begin{split}
\|Q_{L_3}^{\sigma} P_{N_3}(R_{j_1}^A Q_{L_1}^{1}P_{N_1}u_{1}\cdot 
R_{j_2}^A Q_{L_2}^{-1}P_{N_2}u_{2})\|_{L^{2}_{tx}(\R\times \R^2)} & \\
\lesssim A^{\frac{1}{2}} N_{\max}^{-1}L_1^{\frac{1}{2}}L_2^{\frac{1}{2}} L_3^{\frac{1}{2}} 
\|R_{j_1}^A Q_{L_1}^{1}P_{N_1}u_{1}\|_{L^2_{tx}(\R\times \R^2)} &  \|R_{j_2}^A Q_{L_2}^{-1}P_{N_2}u_{2}\|_{L^2_{tx}(\R\times \R^2)}.
\end{split}\label{0609}
\end{equation}
\end{prop}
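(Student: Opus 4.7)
The plan is to employ the standard Plancherel / Cauchy--Schwarz / volume estimate scheme for such bilinear inequalities (cf.\ Lemma~3.1 of \cite{Hi} and the proof of Proposition~2.5 of \cite{BHHT09}).

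First, by Plancherel's theorem,
\[
\|Q_{L_3}^{\sigma} P_{N_3}(R_{j_1}^A Q_{L_1}^{1}P_{N_1}u_{1}\cdot R_{j_2}^A Q_{L_2}^{-1}P_{N_2}u_{2})\|_{L^{2}_{tx}}^{2} = \int_{R_3} \bigl|(\widetilde{u_1} \ast \widetilde{u_2})(\tau, \xi)\bigr|^{2}\, d\tau\, d\xi,
\]
where $R_i$ denotes the Fourier support of the $i$-th localized factor. Applying Cauchy--Schwarz pointwise inside the convolution and then Fubini's theorem, the problem reduces to establishing the uniform volume bound
\[
\sup_{(\tau, \xi) \in R_3} |E(\tau, \xi)| \lesssim A\, N_{\max}^{-2}\, L_1 L_2 L_3,
\]
where $E(\tau, \xi) \subset \R \times \R^2$ is the set of $(\tau_1, \xi_1) \in R_1$ with $(\tau - \tau_1, \xi - \xi_1) \in R_2$.

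Next I would integrate out $\tau_1$. The two conditions $|\tau_1 + |\xi_1|^2| \lesssim L_1$ and $|(\tau - \tau_1) - |\xi - \xi_1|^2| \lesssim L_2$ pin $\tau_1$ inside an interval of length $\lesssim \min\{L_1, L_2\}$. Non-emptiness combined with $|\tau + \sigma|\xi|^2| \lesssim L_3$ forces the resonance identity
\[
\bigl|(\sigma + 1)|\xi|^2 - 2\, \xi_1 \cdot \xi\bigr| \lesssim L_1 + L_2 + L_3,
\]
which is a slab condition on $\xi_1$ of width $\sim (L_1+L_2+L_3)/|\xi|$ perpendicular to $\widehat{\xi}$.

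The spatial part is the geometric heart of the estimate. Since $L_1+L_2+L_3 \ll N_{\max}^2$, Lemma~\ref{modul_est_2} yields $N_1 \sim N_2 =: N$, and each antipodal half of $\Theta_{j_i}^A \cap \{|\xi_i| \sim N\}$ is a near-rectangle of approximate dimensions $N \times N/A$. I would carry out a case analysis based on whether $\xi_1$ and $\xi - \xi_1$ lie in parallel or anti-parallel antipodal halves of their respective sectors: in the parallel case $|\xi| \sim N$ and the slab cuts the two rectangles transversally along their short sides, while in the anti-parallel case $|\xi|$ can be as small as $\sim N/A$ and the slab nearly aligns with their long (radial) axes. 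The condition $\sigma \neq \pm 1$ maintains transversality of the slab to both the radial and angular directions of the sectors, while the quantitative angular-separation hypothesis $16 \leq |j_1 - j_2| \leq 32$ ensures that exactly one antipodal configuration is geometrically feasible for each admissible $(\tau, \xi) \in R_3$. The principal obstacle is the anti-parallel configuration: when $|\xi|$ is much smaller than $N$, the slab thickness $(L_1+L_2+L_3)/|\xi|$ can exceed the short dimension $N/A$ of the sector rectangles, so the intersection area is governed by the short dimension instead of by the slab thickness; a careful accounting of how the slightly-rotated radial axes of the two sector-rectangles combine with the slab orientation (which is where the hypothesis $\sigma \neq \pm 1$ enters quantitatively) is needed to extract the claimed $A^{1/2} N_{\max}^{-1}$ factor simultaneously from the angular and slab geometries.
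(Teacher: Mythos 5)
Your reduction via Cauchy--Schwarz and Fubini to the uniform volume bound $\sup_{(\tau,\xi)}|E(\tau,\xi)|\lesssim A\,N_{\max}^{-2}L_1L_2L_3$ is a valid \emph{sufficient} condition, but that volume bound is false in the main regime of the proposition, so the argument cannot be completed along these lines. Take for instance $N_1\sim N_2\sim N_3\sim N$ large, $A=64$, $L_1=L_2=L_3=1$. Integrating out $\tau_1$ costs a factor $\min\{L_1,L_2\}\sim 1$, and the remaining $\xi_1$-region is the intersection of the sector piece of dimensions $\sim N\times NA^{-1}$ with your resonance slab of width $\sim L_{\max}/|\xi|\sim N^{-1}$, cut transversally across the long axis; this intersection genuinely has area $\sim A^{-1}$, so $|E(\tau,\xi)|\sim A^{-1}$, which for large $N$ vastly exceeds $AN^{-2}L_1L_2L_3=AN^{-2}$. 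In other words, pure volume counting only recovers a bound of the strength of Proposition~\ref{BSE_HHL_2} (of order $A^{-1/2}$ here), while the claimed right-hand side is of order $A^{1/2}N^{-1}$ --- a loss of roughly $N/A$. The same computation shows that volume counting suffices precisely when $L_{\max}\gtrsim A^{-2}N_1^3N_3^{-1}$, and fails below that threshold.

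This is exactly how the paper organizes the proof: in the case $L_{\max}\ge A^{-2}N_1^3N_3^{-1}$ it applies the angular bilinear estimates of Propositions~\ref{BSE_HHL} and~\ref{BSE_HHL_2}, which are themselves proved by your Cauchy--Schwarz/volume scheme; in the complementary low-modulation case it invokes the nonlinear Loomis--Whitney inequality of Bejenaru--Herr--Tataru (Proposition~\ref{prop2.7}), applied after rescaling to unit-size caps of the three paraboloids, with transversality constant $a\sim\langle\sigma\rangle^{-1}A^{-1}$ verified through the determinant bound \eqref{trans}. That convolution estimate exploits the curvature and mutual transversality of the three surfaces --- information strictly beyond the measure of the slice sets $E(\tau,\xi)$ --- and this is where $\sigma\ne\pm1$ and the separation $16\le|j_1-j_2|\le 32$ enter quantitatively. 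Your closing paragraph correctly identifies the delicate geometric configurations, but it stays entirely within the measure-counting framework and therefore supplies no mechanism capable of producing the $A^{1/2}N_{\max}^{-1}$ gain in the low-modulation case; that is the missing idea.
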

The bound such as in \eqref{ABSE1} does not appear for $\kappa \neq 0$ in \cite{Hi} and \cite{HK},
where $\kappa$ is as in \eqref{coeff}.
On the other hand,
similar bounds as in Propositions~\ref{BSE_HHL_2} and ~\ref{thm2.6}
also appear for $\kappa \neq 0$.
See Theorem 2.8 and Proposition 2.9 in \cite{HK}. 
However, we need to treat the case $N_3 \ll N_1 \sim N_3$ in Propositions~\ref{BSE_HHL_2} and ~\ref{thm2.6}.
Hence, while a part of proof of Propositions~\ref{BSE_HHL_2} and ~\ref{thm2.6} is similar to that in the previous results,
we need a more careful calculation.
%For example, there is a set in frequency space where the oscillation cancels out in our case.
See Lemma \ref{angle_est_A} below for example.

We postpone the proof of Propositions \ref{BSE_HHL}, \ref{BSE_HHL_2}, and \ref{thm2.6} to the next subsection.
Assuming Propositions \ref{BSE_HHL}, \ref{BSE_HHL_2}, and \ref{thm2.6},
we here prove the bilinear estimates.
\begin{proof}[Proof of Proposition~\ref{key_be} for $d=2$]
Let $s\ge \frac{1}{2}$ and
\[
(s_1,s_2,s_3)\in \{(s,s,-s),(s,-s,s),(-s,s,s)\}.
\]
We prove (\ref{key_est}). 
We set
\[
u_{N_1,L_1}:=Q_{L_1}^{1}P_{N_1}U,\ 
v_{N_2,L_2}:=Q_{L_2}^{-1}P_{N_2}V,\ 
w_{N_3,L_3}:=Q_{L_3}^{-\sigma}P_{N_3}W. 
\]
Then, we have
\[
\begin{split}
&
\left|\int_{\R \times \R^2} U(t,x)V(t,x)\p_jW(t,x)dxdt\right|\\
&\lesssim \sum_{N_1,N_2,N_3\ge 1}\sum_{L_1,L_2, L_3 \ge 1} N_{3}
\left|\int_{\R \times \R^2} u_{N_1,L_1}v_{N_2,L_2}w_{N_3,L_3}dxdt\right|. 
\end{split}
\]
It suffices to show that
\begin{equation}\label{desired_est}
\begin{split}
&N_{3}
\left|\int_{\R \times \R^2} u_{N_1,L_1}v_{N_2,L_2}w_{N_3,L_3}dxdt\right|\\
&\lesssim
N_{\min}^{s}(L_1L_2L_3)^{c}\left(\frac{N_{\min}}{N_{\max}}\right)^{\eps}
\|u_{N_1,L_1}\|_{L^2_{tx}}\|v_{N_2,L_2}\|_{L^2_{tx}}\|w_{N_3,L_3}\|_{L^2_{tx}}
\end{split}
\end{equation}
for some $b'\in (0,\frac{1}{2})$, $c\in (0,b')$, and $\eps >0$. 
Indeed, from (\ref{desired_est}) and the Cauchy-Schwarz inequality, 
we obtain 
\[
\begin{split}
&\sum_{\substack{N_1,N_2,N_3 \ge 1 \\ N_{1} \lesssim N_{2} \sim N_{3}}}
\sum_{L_1,L_2, L_3 \ge 1} N_{3}
\left|\int_{\R \times \R^2} u_{N_1,L_1}v_{N_2,L_2}w_{N_3,L_3}dxdt\right|\\
&\lesssim 
 \sum_{\substack{N_1,N_2,N_3 \ge 1 \\ N_1 \lesssim N_2 \sim N_3}}
 \sum_{L_1,L_2, L_3\ge 1}
N_{1}^{s}\left(\frac{N_{\min}}{N_{\max}}\right)^{\eps}
(L_1L_2L_3)^{c}\|u_{N_1,L_1}\|_{L^2_{tx}}\|v_{N_2,L_2}\|_{L^2_{tx}}\|w_{N_3,L_3}\|_{L^2_{tx}}\\
&\lesssim  \sum_{N_3 \ge 1}\sum_{\substack{N_2 \ge 1 \\ N_2 \sim N_3}}
N_2^{-(s_2+s_3)-\eps}
\bigg( \sum_{\substack{N_1 \ge 1 \\ N_1\lesssim N_2}}
N_1^{s-s_1+\eps}
N_1^{s_1} \sum_{L_1 \ge 1}L_1^{-(b'-c)}L_1^{b'}\|u_{N_1,L_1}\|_{L^2_{tx}} \bigg) 
\\
& \quad \times \bigg(N_2^{s_2}\sum_{L_2\ge 1}L_2^{-(b'-c)}L_2^{b'}\|v_{N_2,L_2}\|_{L^2_{tx}}\bigg)
\bigg(N_3^{s_3}\sum_{L_3\ge 1}L_3^{-(b'-c)}L_3^{b'}\|w_{N_3,L_3}\|_{L^2_{tx}} \bigg)\\
& \lesssim \|u\|_{X^{s_1,b'}_{1}} \|v\|_{X^{s_2,b'}_{-1}} \|w\|_{X^{s_3, b'}_{-\sigma}},
\end{split}
\]
since $s-s_1\ge 0$, $s-s_1-s_2-s_3 = 0$, and $b'-c>0$. 
The summations for $N_2\lesssim N_1\sim N_3$ and 
$N_3\lesssim N_1\sim N_2$ are similarly handled.

Now, we prove (\ref{desired_est}).\\
\kuuhaku \\
\underline{Case\ 1:\ High modulation, $\displaystyle L_{\max}\gtrsim N_{\max}^2N_{\min}^{-\frac{2}{3}}$}

We first assume $L_3=L_{\max}$. 
By the symmetry, we can assume $N_1\le N_2$.
Then, by the Cauchy-Schwarz inequality and (\ref{L2be_est_2}), 
we have
\[
\begin{split}
&N_3\left|\int_{\R \times \R^2}u_{N_1,L_1}v_{N_2,L_2}w_{N_3,L_3}dxdt\right|\\
&\lesssim N_{3}\|P_{N_3}(u_{N_1,L_1}v_{N_2,L_2})\|_{L^2_{tx}}\|w_{N_3,L_3}\|_{L^2_{tx}}\\
&\lesssim N_{3}N_{1}^{4\delta}
\left(\frac{N_{1}}{N_{2}}\right)^{\frac{1}{2}- 2\delta}L_2^{c}L_3^{c}
\|u_{N_1,L_1}\|_{L^2_{tx}}\|v_{N_2,L_2}\|_{L^2_{tx}}\|w_{N_3,L_3}\|_{L^2_{tx}}\\
&\lesssim N_{3}N_{1}^{4\delta}
\left(\frac{N_{1}}{N_{2}}\right)^{\frac{1}{2}- 2\delta}
(N_{\max}^2N_{\min}^{-\frac{2}{3}})^{-c}(L_1L_2L_3)^{c}\\
&\hspace{15ex}\times \|u_{N_1,L_1}\|_{L^2_{tx}}\|v_{N_2,L_2}\|_{L^2_{tx}}\|w_{N_3,L_3}\|_{L^2_{tx}}
,
\end{split}
\]
where $\delta = \frac{1}{2}-c$. 
If $N_3\lesssim N_1\sim N_2$, then we obtain
\[
N_{3}N_{1}^{4\delta}
\left(\frac{N_{1}}{N_{2}}\right)^{\frac{1}{2}- 2\delta}
(N_{\max}^2N_{\min}^{-\frac{2}{3}})^{-c}
\sim N_3^{3-\frac{16}{3}c-s}N_3^s\left(\frac{N_3}{N_1}\right)^{6c-2}. 
\]
If $N_1\lesssim N_2\sim N_3$, then we obtain
\[
N_{3}N_{1}^{4\delta}
\left(\frac{N_{1}}{N_{2}}\right)^{\frac{1}{2}- 2\delta}
(N_{\max}^2N_{\min}^{-\frac{2}{3}})^{-c}
\sim N_1^{3-\frac{16}{3}c-s}N_1^s\left(\frac{N_1}{N_3}\right)^{4c-\frac{3}{2}}.
\]
Therefore, by choosing $b'$ and $c$ as
$\max\{\frac{3(3-s)}{16},\frac{3}{8},\frac{1}{3}\}<c<b'<\frac{1}{2}$
for $s> \frac{1}{3}$, 
we get (\ref{desired_est}). 
The case $L_1=L_{\max}$ and $L_2=L_{\max}$ is similarly treated, 
but we use (\ref{L2be_est_20}) 
instead of  (\ref{L2be_est_2}). \\
\begin{comment}
If $d=2$, then we obtain
\[
\begin{split}
 N_1^{\frac{d+4}{2}- 6 b' -s}
\le N_1^{-6(b'-\frac{5}{12})}
\end{split}
\]
for $s\ge \frac{1}{2}$. 
Therefore, by choosing $b'\in [\frac{5}{12}, \frac{1}{2})$, 
we get (\ref{wait_esti}). 
While if $d=3$, then we obtain 
\[
\begin{split}
 N_1^{\frac{d+4}{2}- 6 b' -s}
= N_1^{-\left( \left( s-\frac{1}{2} \right) -6 \left( \frac{1}{2} - b' \right) \right) }.
\end{split}
\]
Therefore, by choosing $b'\in (\frac{1}{2}-\frac{1}{6}(s-\frac{1}{2}),\frac{1}{2})$ 
for $s>\frac{1}{2}$, we get (\ref{wait_esti}). 

The proofs for the cases $L_2\gtrsim N_{\max}^2$ and $L_3\gtrsim N_{\max}^2$ are quite same. We omit them.\\
\end{comment}
\kuuhaku \\
\underline{Case\ 2:\ Low modulation, $\displaystyle L_{\max}\ll N_{\max}^2N_{\min}^{-\frac{2}{3}}$\ $(\lesssim N_{\max}^2)$}

By Lemma \ref{modul_est_2}, 
we can assume $N_3 \lesssim N_1 \sim N_2$. 
We set
\begin{equation}
M :=L_{\max}^{-\frac{3}{4}}N_1^{\frac{3}{2}}N_3^{-\frac{1}{2}} \gg 1
\label{mod2}
\end{equation}
and decompose $\R^3 \cross \R^3$ as follows:
\begin{equation*}
\R^3 \cross \R^3 =
\bigg( \bigcup_{\tiny{\substack{0 \leq j_1,j_2 \leq M -1\\|j_1 - j_2|\leq 16}}} 
{\mathfrak{D}}_{j_1}^{M} \cross {\mathfrak{D}}_{j_2}^{M} \bigg)
 \cup 
\bigg(\bigcup_{64 \leq A \leq M} \ \bigcup_{\tiny{\substack{0 \leq j_1,j_2 \leq A -1\\ 16 \leq |j_1 - j_2|\leq 32}}} 
{\mathfrak{D}}_{j_1}^A \cross {\mathfrak{D}}_{j_2}^A\bigg),
\end{equation*}
where $\mathfrak{D}_j^A$ is as in \eqref{DjA}.
We can write
\begin{align}
 &\left|\int_{\R \times \R^2}u_{N_1,L_1}v_{N_2,L_2}w_{N_3,L_3}dxdt\right| \notag\\
 &\leq \sum_{{\tiny{\substack{0 \leq j_1,j_2 \leq M -1\\|j_1 - j_2|\leq 16}}}} 
\left|\int_{\R \times \R^2}
u_{N_1,L_1, j_1}^M v_{N_2,L_2, j_2}^M w_{N_3,L_3}dxdt\right| \notag\\ 
&\qquad
+
\sum_{64 \leq A \leq M}  \sum_{{\tiny{\substack{0 \leq j_1,j_2 \leq A-1\\ 16 \le |j_1 - j_2|\leq 32}}}} 
\left|\int_{\R \times \R^2}
u_{N_1,L_1, j_1}^A v_{N_2,L_2, j_2}^A w_{N_3,L_3}dxdt\right| \notag\\
&=: {\rm I} + {\rm \II},
\label{I,II}
\end{align}
where
\[
u_{N_1,L_1,j_1}^A=R^A_{j_1}u_{N_1,L_1},\ \ 
v_{N_2,L_2,j_2}^A=R^A_{j_2}v_{N_2,L_2}. 
\]
For the contribution from the first term $\rm I$ in \eqref{I,II}, 
we first assume $L_{\textnormal{max}} = L_3$. 
By \eqref{I,II}, the H\"older inequality, (\ref{ABSE1}), and \eqref{mod2},
we get
\begin{align*}
N_3 \cdot {\rm I}
& \lesssim  
\sum_{{\tiny{\substack{0 \leq j_1,j_2 \leq M-1\\|j_1 - j_2|\leq 16}}}}
N_3\|P_{N_3} ( u_{N_1,L_1, j_1}^M v_{N_2,L_2, j_2}^M)\|_{L^{2}_{tx}}
\|w_{N_3,L_3} \|_{{L^2_{t x}}}  \\
& \lesssim 
\sum_{{\tiny{\substack{0 \leq j_1,j_2 \leq M-1\\|j_1 - j_2|\leq 16}}}}N_3
\left(\frac{N_1}{N_3M}\right)^{\frac{1}{2}} L_1^{\frac{1}{2}}L_2^{\frac{1}{2}} 
\|u_{N_1,L_1, j_1}^M\|_{L^2_{tx}}   \|v_{N_2,L_2, j_2}^M\|_{L^2_{tx}}
\|w_{N_3,L_3} \|_{{L^2_{t x}}}\\
& \lesssim  N_3^{\frac{1}{2}}\left(\frac{N_3}{N_1}\right)^{\frac{1}{4}}
  L_1^{\frac{1}{2}}L_2^{\frac{1}{2}}
L_3^{\frac{3}{8}} \|u_{N_1,L_1}\|_{L^2_{tx}}\|v_{N_2,L_2}\|_{L^2_{tx}}\|w_{N_3,L_3}\|_{L^2_{tx}}\\
&\lesssim N_3^{\frac{1}{2}}\left(\frac{N_3}{N_1}\right)^{\frac{1}{4}}  (L_1L_2L_3)^{\frac{11}{24}} \|u_{N_1,L_1}\|_{L^2_{tx}}\|v_{N_2,L_2}\|_{L^2_{tx}}\|w_{N_3,L_3}\|_{L^2_{tx}},
\end{align*}
which shows \eqref{desired_est}.
If $L_{\max}=L_1$ or $L_{\max}=L_2$, 
then we can use a better estimate (\ref{ABSE2}) instead of (\ref{ABSE1}).
Hence, we obtain \eqref{desired_est} in this case.

For the second term $\rm \II$ in \eqref{I,II}, by Proposition \ref{thm2.6} and \eqref{mod2},
we get
\begin{align*}
N_3 \cdot {\rm \II}
%& \sum_{64 \leq A \leq M} \sum_{{\tiny{\substack{0 \leq j_1,j_2 \leq A-1\\16\le |j_1 - j_2|\leq 32}}}}
%N_3\left|\int_{\R \times \R^2} u_{N_1,L_1, j_1}^A v_{N_2,L_2, j_2}^A w_{N_3,L_3,j_3}dxdt\right| \\
& \lesssim  
\sum_{64 \leq A \leq M} \sum_{{\tiny{\substack{0 \leq j_1,j_2 \leq A-1\\ 16 \le |j_1 - j_2|\leq 32}}}} 
N_3\|Q_{L_3}^{\sigma} P_{N_3} ( u_{N_1,L_1, j_1}^A v_{N_2,L_2, j_2}^A)\|_{L^{2}_{tx}} 
\|w_{N_3,L_3} \|_{{L^2_{t x}}}\\
& \lesssim 
\sum_{64 \leq A \leq M}  
\sum_{{\tiny{\substack{0 \leq j_1,j_2 \leq A-1\\
16 \le |j_1 - j_2|\leq 32}}}}  
 N_3A^{\frac{1}{2}}N_1^{-1} ( L_1 L_2 L_3)^\frac{1}{2} \\
&\hspace{80pt}
\times
\|u_{N_1,L_1, j_1}^A\|_{L^2_{tx}}   \|v_{N_2,L_2, j_2}^A\|_{L^2_{tx}}
\|w_{N_3,L_3} \|_{{L^2_{t x}}}\\
&  \lesssim N_3^{\frac{1}{2}}\left(\frac{N_3}{N_1}\right)^{\frac{1}{4}}
L_{\max}^{-\frac{3}{8}} ( L_1 L_2 L_3)^{\frac{1}{2}} \|u_{N_1,L_1}\|_{L^2_{tx}}\|v_{N_2,L_2}\|_{L^2_{tx}}\|w_{N_3,L_3}\|_{L^2_{tx}}\\
&\lesssim N_3^{\frac{1}{2}}\left(\frac{N_3}{N_1}\right)^{\frac{1}{4}} ( L_1 L_2 L_3)^{\frac{3}{8}} \|u_{N_1,L_1}\|_{L^2_{tx}}\|v_{N_2,L_2}\|_{L^2_{tx}}\|w_{N_3,L_3}\|_{L^2_{tx}},
\end{align*} 
which shows \eqref{desired_est}.
This completes the proof of Proposition \ref{key_be} for $d=2$.
\end{proof}

\subsection{Proof of key propositions}

In this subsection, we prove Propositions \ref{BSE_HHL}, \ref{BSE_HHL_2}, and \ref{thm2.6}.
First, we show Proposition \ref{BSE_HHL}.

\begin{proof}[Proof of Proposition~\ref{BSE_HHL}]
If $A\lesssim \frac{N_1}{N_3}$, then we obtain (\ref{ABSE1}) 
by the H\"older inequality and Corollary~\ref{Bo_Stri} with $p=q=4$,
since $1\lesssim \frac{N_1}{N_3A}$. 
Therefore, we can assume
\begin{equation}
A\gg \frac{N_1}{N_3} \gtrsim 1.
\label{AA}
\end{equation} 
We set $f_1=\F[R^A_{j_1}Q_{L_1}^{1}P_{N_1}u_1]$ and 
$f_2=\F[R^A_{j_2}Q_{L_2}^{-1}P_{N_2}u_2]$. 
By the duality argument, it suffices to show that
\begin{equation}\label{BSE_pf_1}
\begin{split}
&\left|\int_{\Omega}f_1(\tau_1,\xi_1)f_2(\tau_2,\xi_2)f(\tau_1+\tau_2,\xi_1+\xi_2)d\tau_1d\tau_2d\xi_1d\xi_2\right|\\
&\lesssim \left(\frac{N_1}{N_3A}\right)^{\frac{1}{2}}L_1^{\frac{1}{2}}L_2^{\frac{1}{2}}\|f_1\|_{L^2_{\tau \xi}}\|f_2\|_{L^2_{\tau \xi}}\|f\|_{L^2_{\tau \xi}}
\end{split}
\end{equation}
for any $f\in L^2(\R\times \R^2)$,
where $\xi_i =(\xi_i^{(1)}, \xi_i^{(2)})=|\xi_i|(\cos \theta_i, \sin \theta_i),\ (i=1,2)$ and
\[
\Omega =\left\{(\tau_1,\tau_2,\xi_1,\xi_2)
\left|
\begin{split}
&\ |\xi_1|\sim N_1,\ |\xi_2|\sim N_2,\ |\xi_1+\xi_2|\sim N_3,\\
&\ \theta_1\in \Theta_{j_1}^A,\ \theta_2\in \Theta_{j_2}^A,\\
&\ |\tau_1+|\xi_1|^2|\sim L_1,\ |\tau_2-|\xi_2|^2|\sim L_2
\end{split}
\right.
\right\}.
\]
By the Cauchy-Schwarz inequality, we have
\begin{equation}\label{BSE_pf_2}
\begin{split}
&\left|\int_{\Omega}f_1(\tau_1,\xi_1)f_2(\tau_2,\xi_2)f(\tau_1+\tau_2,\xi_1+\xi_2)d\tau_1d\tau_2d\xi_1d\xi_2\right|\\
&\lesssim \|f_1\|_{L^2_{\tau\xi}}\|f_2\|_{L^2_{\tau\xi}}
\left(\int_{\Omega}|f(\tau_1+\tau_2,\xi_1+\xi_2)|^2d\tau_1d\tau_2d\xi_1d\xi_2\right)^{\frac{1}{2}}.
\end{split}
\end{equation}
By changing variables $(\xi_1,\xi_2)\mapsto (\widetilde{\xi}_1, \widetilde{\xi}_2)$ as 
\[
(\widetilde{\xi}_1, \widetilde{\xi}_2)
=\left(\xi_1R\left(-\frac{\pi}{A}j_2\right), \xi_2R\left(-\frac{\pi}{A}j_2\right)\right),\ 
R(\theta )=
\begin{pmatrix}
\cos \theta & \sin \theta \\
-\sin \theta & \cos \theta 
\end{pmatrix},
\]
we have
\begin{equation}\label{BSE_pf_3}
\begin{split}
&\int_{\Omega}|f(\tau_1+\tau_2,\xi_1+\xi_2)|^2d\tau_1d\tau_2d\xi_1d\xi_2\\
&=\int_{\widetilde{\Omega}}\left|f\left(\tau_1+\tau_2,(\widetilde{\xi}_1+\widetilde{\xi}_2)R\left(\frac{\pi}{A}j_2\right)\right)\right|^2d\tau_1d\tau_2d\widetilde{\xi}_1d\widetilde{\xi}_2,
\end{split}
\end{equation}
where $\widetilde{\xi}_i =(\widetilde{\xi}_i^{(1)}, \widetilde{\xi}_i^{(2)})=|\widetilde{\xi}_i|(\cos \widetilde{\theta}_i, \sin \widetilde{\theta}_i)$, 
$(i=1,2)$ and
\[
\widetilde{\Omega} =\left\{(\tau_1,\tau_2,\widetilde{\xi}_1,\widetilde{\xi}_2)
\left|
\begin{split}
&\ |\widetilde{\xi}_1|\sim N_1,\ |\widetilde{\xi}_2|\sim N_2,\ 
|\widetilde{\xi}_1+\widetilde{\xi}_2|\sim N_3,\\
&\ \widetilde{\theta}_1\in \Theta_{j_1-j_2}^A,\ 
\widetilde{\theta}_2\in \Theta_{0}^A,\\
&\ |\tau_1+|\widetilde{\xi}_1|^2|\sim L_1,\ 
|\tau_2-|\widetilde{\xi}_2|^2|\sim L_2
\end{split}
\right.
\right\}.
\]
Since $|j_1-j_2|\lesssim 1$, we have
\[
\min\{|\widetilde{\theta}_1|, |\pi -\widetilde{\theta}_1|\}\lesssim A^{-1},\ \ 
\min\{|\widetilde{\theta}_2|, |\pi -\widetilde{\theta}_2|\}\lesssim A^{-1}
\]
for $\widetilde{\xi}_1\in \Theta_{j_1-j_2}^A$ and $\widetilde{\xi}_2\in \Theta_0^A$. 
Therefore, it follows from \eqref{AA} that
\[
|\widetilde{\xi}_1^{(2)}+\widetilde{\xi}_2^{(2)}|
\le |\widetilde{\xi}_1||\sin \widetilde{\theta}_1|
+|\widetilde{\xi}_2||\sin \widetilde{\theta}_2|
\lesssim (N_1+N_2)A^{-1}\ll N_3. 
\]
It says that $|\widetilde{\xi}_1^{(1)}+\widetilde{\xi}_2^{(1)}|\sim N_3$,
since $|\widetilde{\xi}_1+\widetilde{\xi}_2|\sim N_3$ in $\widetilde{\Omega}$. 
By changing variables $(\tau_1,\tau_2)\mapsto (c_1,c_2)$ 
and $(\widetilde{\xi}_1,\widetilde{\xi}_2)\mapsto (\mu,w,z)$ as 
\[
\begin{split}
&c_1=\tau_1+|\widetilde{\xi}_1|^2,\ \  
c_2=\tau_2-|\widetilde{\xi}_2|^2,\\
&\mu  =c_1+c_2-|\widetilde{\xi}_1|^2+|\widetilde{\xi}_2|^2,\\  
&w=\widetilde{\xi}_1+\widetilde{\xi}_2,\ z=\widetilde{\xi}_2^{(2)},
\end{split}
\]
we have
\[
\begin{split}
&\int_{\widetilde{\Omega}}\left|f\left(\tau_1+\tau_2,(\widetilde{\xi}_1+\widetilde{\xi}_2)R\left(\frac{\pi}{A}j_2\right)\right)\right|^2d\tau_1d\tau_2d\widetilde{\xi}_1d\widetilde{\xi}_2\\
&\lesssim \bigg(\int_{\substack{|c_1|\sim L_1\\ |c_2|\sim L_2}}
dc_1dc_2\bigg)
\left(\int_{|z|\lesssim N_2A^{-1}}dz\right)\\
&\hspace{10ex}\times \left(\int_{\R \times \R^2} \left|f\left(\mu,wR\left(\frac{\pi}{A}j_2\right)\right)\right|^2
\ee_{\{|\widetilde{\xi}_1+\widetilde{\xi}_2|\sim N_3\}}(\widetilde{\xi}_1,\widetilde{\xi}_2)J(\widetilde{\xi}_1,\widetilde{\xi}_2)^{-1}d\mu dw\right),
\end{split}
\]
where
\[
J(\widetilde{\xi}_1,\widetilde{\xi}_2)
=\left|{\rm det}\frac{\partial (\mu ,w,z)}{\partial (\widetilde{\xi}_1,\widetilde{\xi}_2)}\right|
=|\widetilde{\xi}_1^{(1)}+\widetilde{\xi}_2^{(1)}|\sim N_3. 
\]
Therefore, we obtain
\begin{equation}\label{BSE_pf_4}
\int_{\widetilde{\Omega}}\left|f\left(\tau_1+\tau_2,(\widetilde{\xi}_1+\widetilde{\xi}_2)R\left(\frac{\pi}{A}j_2\right)\right)\right|^2d\tau_1d\tau_2d\widetilde{\xi}_1d\widetilde{\xi}_2
\lesssim \frac{N_2A^{-1}}{N_3}L_1L_2\|f\|_{L^2}^2
\end{equation}
since
\[
\int_{\R \times \R^2} \left|f\left(\mu,wR\left(\frac{\pi}{A}j_2\right)\right)\right|^2d\mu dw
=\int_{\R \times \R^2} |f(\mu,w)|^2d\mu dw=\|f\|_{L^2}^2.
\]
As a result, we get (\ref{BSE_pf_1}) from (\ref{BSE_pf_2}), (\ref{BSE_pf_3}), 
and (\ref{BSE_pf_4}).
\end{proof}

Before the proof of Proposition \ref{BSE_HHL_2},
we state an elementary lemma.

\begin{lemm} \label{element1}
Let $L$, $M$, $N \in 2^{\N_0}$.
Assume that $N^2 \gg L$.
Then, we have
\[
| \{ x \in \R \ | \ |(x-M)^2 -N^2| \lesssim L \}|
\lesssim \frac{L}{N}.
\]
\end{lemm}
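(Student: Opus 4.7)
The plan is to reduce the problem to a one-variable question about $y^2$ and then use the hypothesis $N^2\gg L$ to localize near $y=\pm N$.

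First I would translate: set $y=x-M$. Since translation preserves Lebesgue measure, it suffices to bound the measure of
\[
E = \{y\in\R : |y^2-N^2|\lesssim L\}.
\]
From $|y^2-N^2|\le CL$ we get $y^2\in [N^2-CL,\,N^2+CL]$, and the assumption $N^2\gg L$ guarantees $N^2-CL > 0$ (for $L$ small enough compared to $N^2$, which is exactly the regime we are in). Hence $E\subset\{y:|y|\in[\sqrt{N^2-CL},\sqrt{N^2+CL}]\}$, and this is a union of two intervals, one in $(0,\infty)$ and one in $(-\infty,0)$.

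Next I would bound the length of one such interval. Rationalizing,
\[
\sqrt{N^2+CL}-\sqrt{N^2-CL}
=\frac{2CL}{\sqrt{N^2+CL}+\sqrt{N^2-CL}}
\lesssim \frac{L}{N},
\]
where again $N^2\gg L$ is used so that the denominator is $\sim N$. Since $E$ is covered by two intervals each of length $\lesssim L/N$, this immediately yields $|E|\lesssim L/N$, which is the desired bound.

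Alternatively, one could factor $y^2-N^2=(y-N)(y+N)$ and argue that for $y$ near $N$ one has $|y+N|\sim N$, forcing $|y-N|\lesssim L/N$, and symmetrically near $-N$; this gives the same estimate. There is no real obstacle here: the only point to be careful about is making sure the square root is well defined and that the set really splits into the two components near $\pm N$ rather than collapsing into a single interval around the origin, which is guaranteed precisely by the hypothesis $N^2\gg L$.
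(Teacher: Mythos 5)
Your proof is correct and follows essentially the same route as the paper: after translating by $M$, both arguments confine $|y|$ to $[\sqrt{N^2-CL},\sqrt{N^2+CL}]$ and rationalize the difference of square roots, using $N^2\gg L$ to bound the denominator by $\sim N$. No issues.
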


\begin{proof}
When $|(x-M)^2 -N^2| \lesssim L$,
a direct calculation shows that $x$ is in
\[
\left[M-\sqrt{N^2-C L}, M-\sqrt{N^2+CL}\right]
\cup
\left[M+\sqrt{N^2-CL},M+\sqrt{N^2+CL}\right]
\]
for some constant $C>0$.
Hence,
from $N^2 \gg L$,
we have
\begin{align*}
| \{ x \in \R \ | \ |(x-M)^2 -N^2| \lesssim L \}|
&\lesssim \sqrt{N^2+CL} - \sqrt{N^2-CL} \\
&= \frac{2CL}{\sqrt{N^2+CL} + \sqrt{N^2-CL}} \\
&\sim \frac{L}{N},
\end{align*}
which concludes the proof.
\end{proof}

We are now in position to prove Proposition~\ref{BSE_HHL_2}.

\begin{proof}[Proof of Proposition~\ref{BSE_HHL_2}]
By Lemma~\ref{modul_est_2} and $L_{\max}\ll N_{\max}^2$, 
we can assume $N_3\lesssim N_1\sim N_2$. 
%We prove only (\ref{ABSE2}) since 
%the proof of (\ref{ABSE3}) is same. 
By Plancherel's theorem as in Remark \ref{duality}, it suffices to show that
\begin{equation}
\begin{split}
&\left\|\psi_{N_1,L_1,j_1}(\tau_1,\xi_1)
\int_{\R \times \R^2} f_2(\tau_2,\xi_2)f_3(\tau_1+\tau_2,\xi_1+\xi_2)d\tau_2d\xi_2
\right\|_{L^2_{\tau_1 \xi_1}}\\
&\lesssim A^{-\frac{1}{2}}L_2^{\frac{1}{2}}L_3^{\frac{1}{2}}
\|f_2\|_{L^2_{\tau \xi}}\|f_3\|_{L^2_{\tau\xi}},
\end{split}
\label{AP0b}
\end{equation}
where  $f_2=\F[R_{j_2}^AQ_{L_2}^{-1}P_{N_1}u_2]$, $f_3=\F[Q_{L_3}^{\sigma}P_{N_3}u_3]$, and
$\psi_{N_1,L_1,j_1}(\tau_1,\xi_1) =\omega_{j_1}^{A}(\theta_1)
\psi_{L_1}(\tau_1+|\xi_1|^2)\psi_{N_1}(\xi_1)$
for $(\tau_1,\xi_1)=(\tau_1, |\xi_1|\cos \theta_1, |\xi_1|\sin \theta_1)$.
If $A \sim 1$, \eqref{AP0b} follows from Corollary~\ref{Bo_Stri} with $p=q=4$.
We hence assume that $A \gg 1$.

By the Cauchy-Schwarz inequality, we have
\begin{equation}
\begin{split}
&\left\|\psi_{N_1,L_1,j_1}(\tau_1,\xi_1)
\int_{\R \times \R^2} f_2(\tau_2,\xi_2)f_3(\tau_1+\tau_2,\xi_1+\xi_2)d\tau_2d\xi_2
\right\|_{L^2_{\tau_1 \xi_1}}\\
&\lesssim 
\bigg\|\psi_{N_1,L_1,j_1}(\tau_1,\xi_1) \\
&\qquad
\times 
\left(\int_{\R \times \R^2} |f_2(\tau_2,\xi_2)|^2|f_3(\tau_1+\tau_2,\xi_1+\xi_2)|^2d\tau_2d\xi_2
\right)^{\frac{1}{2}} |E(\tau_1,\xi_1)|^{\frac{1}{2}}
\bigg\|_{L^2_{\tau_1 \xi_1}}\\
&\lesssim \sup_{(\tau_1,\xi_1)\in {\rm supp}\psi_{N_1,L_1,j_1}}
|E(\tau_1,\xi_1)|^{\frac{1}{2}}
\cdot \|f_2\|_{L^2_{\tau\xi}}\|f_3\|_{L^2_{\tau\xi}},
\end{split}
\label{AP0}
\end{equation}
where
\[
E(\tau_1,\xi_1)=\left\{(\tau_2,\xi_2)\in {\mathfrak{D}}_{j_2}^A\left|
\begin{split}
&\langle \tau_2-|\xi_2|^2\rangle\sim L_2,\ 
\langle \tau_1+\tau_2+\sigma |\xi_1+\xi_2|^2\rangle\sim L_3,\\
&|\xi_2|\sim N_2,\ |\xi_1+\xi_2|\sim N_3
\end{split}
\right.
\right\}.
\]
We set $\widetilde{E}(\tau_1,\xi_1)=\{\xi_2| \ (\tau_2,\xi_2)\in E(\tau_1,\xi_1) \text{ for some $\tau_2 \in \R$} \}$. 
Then, it holds that
\begin{equation}
|E(\tau_1,\xi_1)|
\lesssim \min\{L_2,L_3\}
|\widetilde{E}(\tau_1,\xi_1)|.
\label{AP0a}
\end{equation}
For 
$\xi_2=(|\xi_2|\cos \theta_2,|\xi_2|\sin \theta_2)\in \widetilde{E}(\tau_1,\xi_1)$, we obtain
\begin{equation}
\begin{split}
&-(\tau_1+|\xi_1|^2) - (\tau_2-|\xi_2|^2)+(\tau_1+\tau_2+\sigma |\xi_1+\xi_2|^2)\\
&=-|\xi_1|^2+|\xi_2|^2+\sigma|\xi_1+\xi_2|^2\\
&=\frac{((1+\sigma)|\xi_2|+\sigma|\xi_1|\cos \angle (\xi_1,\xi_2))^2
-(1-\sigma^2\sin^2\angle (\xi_1,\xi_2))|\xi_1|^2}{1+\sigma}. 
\end{split}
%\label{AP1b}
\notag
\end{equation}
From $(\tau_1,\xi_1)\in {\rm supp}\psi_{N_1,L_1,j_1}$ and $\xi_2 \in \widetilde{E}(\tau_1,\xi_1)$,
it says that
\begin{equation}
\begin{split}
&((1+\sigma)|\xi_2|+\sigma|\xi_1|\cos \angle (\xi_1,\xi_2))^2 \\
&=
(1-\sigma^2\sin^2\angle (\xi_1,\xi_2))|\xi_1|^2
-(1+\sigma) (\tau_1+|\xi_1|^2)
+O(\max\{L_2,L_3\}). 
\end{split}
\label{AP1}
\end{equation}
Here, a simple calculation shows that
\begin{equation}
\frac{x}{2} \le \sin x \le x
\label{AP1a}
\end{equation}
for $0 \le x \le \frac{\pi}{2}$.
It follows from $\theta_1\in \Theta_{j_1}^A$, $\theta_2\in \Theta_{j_2}^A$,
$|j_1-j_2| \le 32$, and $A \gg 1$ that
\begin{equation}
|\sigma \sin \angle (\xi_1,\xi_2)|
\le |\sigma \angle (\xi_1,\xi_2)|
\lesssim A^{-1}
\ll 1.
\label{AP2}
\end{equation}
Since $\xi_2 =(|\xi_2| \cos \theta_2, |\xi_2| \sin \theta_2)$,
we may regard that $|\xi_2|$ and $\theta_2$ are independent variables.
Then, Lemma \ref{element1} with \eqref{AP1} and \eqref{AP2} yields that
$|\xi_2|$ is restricted to a set $\Omega$ of measure 
at most $O\left(\frac{\max\{L_2,L_3\}}{N_1}\right)$.
As a result, we obtain
\begin{equation}
\begin{split}
|\widetilde{E}(\tau,\xi)|
&\lesssim \int_{\theta_2\in \Theta_{j_2}^A}
\left(\int_{|\xi_2|\in \Omega}|\xi_2|d|\xi_2|\right)d\theta_2\\
&\lesssim A^{-1}\max\{L_2,L_3\},
\end{split}
\label{AP3}
\end{equation}
since $|\xi_2|\sim N_2\sim N_1$. 

Therefore, \eqref{AP0b} follows from \eqref{AP0}, \eqref{AP0a}, and \eqref{AP3}.
This concludes the proof of Proposition \ref{BSE_HHL_2}.
\end{proof}

To prove Proposition~\ref{thm2.6}, 
we use a nonlinear version of the Loomis-Whitney inequality. 
\begin{prop}[\cite{BHT10} Corollary 1.5] \label{prop2.7}
Assume that the surface $S_i$ $(i=1,2,3)$ 
is an open and bounded subset of $S_i^*$ which 
satisfies the following conditions \textnormal{(Assumption 1.1 in \cite{BHT10})}.

\textnormal{(i)} $S_i^*$ is defined as
\begin{equation*}
S_i^* = \{ {\lambda_i} \in U_i \ | \ \Phi_i({\lambda_i}) = 0 , \nabla \Phi_i \not= 0 \}
\end{equation*}
for a convex $U_i \subset \R^3$ such that $\dist (S_i, U_i^c) \geq$ \textnormal{diam}$(S_i)$ and $\Phi_i \in C^{1,1} (U_i)$;

\textnormal{(ii)} the unit normal vector field $\mathfrak{n}_i$ on $S_i^*$ satisfies the H\"{o}lder condition
\begin{equation*}
\sup_{\lambda, \lambda' \in S_i^*} \bigg(\frac{|\mathfrak{n}_i(\lambda) - 
\mathfrak{n}_i(\lambda')|}{|\lambda - \lambda'|}
+ \frac{|\mathfrak{n}_i(\lambda) \cdot ({\lambda} - {\lambda}')|}{|{\lambda} - {\lambda}'|^2} \bigg) \lesssim 1;
\end{equation*}

\textnormal{(iii)} there exists $a>0$ such that the matrix 
\[
{N}({\lambda_1}, {\lambda_2}, {\lambda_3}) = ({\mathfrak{n}_1} 
({\lambda_1}), {\mathfrak{n}_2}({\lambda_2}), {\mathfrak{n}_3}({\lambda_3}))
\]
satisfies the transversality condition
\begin{equation*}
a \leq \textnormal{det} {N}({\lambda_1}, {\lambda_2}, {\lambda_3})  \leq 1
\end{equation*}
for all $({\lambda_1}, {\lambda_2}, {\lambda_3}) \in {S_1^*} \cross {S_2^*} \cross {S_3^*}$.

We also assume \textnormal{diam}$({S_i}) \lesssim a$. 
Then for functions $f \in L^2 (S_1)$ and $g \in L^2 (S_2)$, the restriction of the convolution $f*g$ to 
$S_3$ is a well-defined $L^2(S_3)$-function which satisfies 
\begin{equation*}
\| f *g \|_{L^2(S_3)} \lesssim \frac{1}{\sqrt{a}} \| f \|_{L^2(S_1)} \| g\|_{L^2(S_2)}.
\end{equation*}
\end{prop}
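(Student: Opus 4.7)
I would reduce the convolution bound to a trilinear form by duality, then prove the affine (flat) model case via a linear change of variables that diagonalizes the three unit normals $\mathfrak{n}_i$ and reduces everything to the classical three-plane Loomis--Whitney inequality on $\R^3$, and finally transfer to the $C^{1,1}$ case by showing that hypothesis (ii) together with $\mathrm{diam}(S_i)\lesssim a$ make the surfaces $S_i$ controllable perturbations of their tangent planes. The sharp constant $a^{-1/2}$ will arise from the Jacobian $(\det N)^{-1/2}$ of the coordinate change, and the transversality $\det N\geq a$ will be precisely what keeps this Jacobian bounded below throughout.

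\textbf{Step 1 (Duality and thickening).} Dualizing against $h\in L^2(S_3)$, the claim is equivalent to the trilinear inequality
\[
\Bigl|\int_{S_1}\!\int_{S_2} f(\lambda_1)g(\lambda_2)h(\lambda_1+\lambda_2)\,d\sigma_1\,d\sigma_2\Bigr|
\lesssim a^{-1/2}\|f\|_{L^2(S_1)}\|g\|_{L^2(S_2)}\|h\|_{L^2(S_3)}.
\]
To make sense of $h(\lambda_1+\lambda_2)$ on the measure-zero incidence set $\{\lambda_1+\lambda_2\in S_3\}$, I would thicken $h$ along the normal field $\mathfrak{n}_3$ to an $\eps$-tube around $S_3$, prove the bound uniformly in $\eps$, and pass to the limit $\eps\to 0$.

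\textbf{Step 2 (Affine model).} Assume first that each $S_i$ is flat with constant normal $\mathfrak{n}_i$. Apply the linear change of variables $y=N^T\lambda$ on $\R^3$, so that $y_i=\mathfrak{n}_i\cdot\lambda$ is constant on $S_i$; its Jacobian equals $\det N\geq a$. Parametrizing $\lambda_j\in S_j$ by the two transverse coordinates $(y_k)_{k\neq j}$ and tracking Jacobians, the trilinear form of Step 1 reduces to
\[
(\det N)^{-1}\int_{\R^3} F_1(y_2,y_3)F_2(y_1,y_3)F_3(y_1,y_2)\,dy_1\,dy_2\,dy_3
\leq (\det N)^{-1}\prod_{i=1}^{3}\|F_i\|_{L^2(\R^2)}
\]
by the classical three-plane Loomis--Whitney inequality on $\R^3$. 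Converting the $\|F_i\|_{L^2(\R^2)}$ norms back to the surface norms $\|f\|_{L^2(S_1)}$, $\|g\|_{L^2(S_2)}$, $\|h\|_{L^2(S_3)}$ costs an additional factor of $(\det N)^{1/2}$, so that the overall constant becomes $(\det N)^{-1/2}\lesssim a^{-1/2}$.

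\textbf{Step 3 (Curved case and main obstacle).} For general $C^{1,1}$ surfaces, represent each $S_i$ as the graph of a $C^{1,1}$ function $\phi_i$ over its tangent plane at some reference point $\lambda_i^{\ast}$; assumption (ii) provides a uniform $C^{1,1}$ bound on $\phi_i$ together with quadratic vanishing of the deviation at $\lambda_i^{\ast}$. I would then repeat the coordinate change of Step 2 with $\mathfrak{n}_i(\lambda)$ now depending on $\lambda$, so that $\Psi$ becomes a nonlinear diffeomorphism. The main obstacle is to show that neither $\det D\Psi$ nor the transversality of the three tangent planes degenerates across the region of integration: both parts of assumption (ii) enter here, the H\"older control of $\mathfrak{n}_i$ bounding the variation of the Jacobian and the quadratic-deviation term bounding how far $S_i$ strays from its tangent plane. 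The hypothesis $\mathrm{diam}(S_i)\lesssim a$ is precisely what makes these deviations small relative to the transversality scale $a$, so that $\det D\Psi$ stays within an $O(1)$ factor of $\det N\geq a$ everywhere; the affine argument of Step 2 then transfers with only multiplicative $O(1)$ Jacobian corrections, yielding the claimed bound.
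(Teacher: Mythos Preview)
The paper does not prove this proposition at all: it is quoted verbatim as Corollary~1.5 of \cite{BHT10} and used as a black box in the proof of Proposition~\ref{thm2.6}. So there is no ``paper's own proof'' to compare against; the relevant comparison is with the argument in \cite{BHT10}.

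Your Steps~1 and~2 are correct and standard: duality plus $\eps$-thickening reduces matters to a trilinear bound, and in the flat case the linear change of variables with matrix $N$ converts this exactly into the classical three-plane Loomis--Whitney inequality, with the constant $(\det N)^{-1/2}\le a^{-1/2}$ emerging from the Jacobian bookkeeping.

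Step~3, however, is where the actual content lies, and your sketch does not close. The difficulty is that a single nonlinear diffeomorphism $\Psi$ cannot simultaneously flatten all three surfaces: after any such change of variables the $S_i$ remain curved (merely with controlled curvature), so you are not back in the setting of Step~2 and cannot invoke the flat Loomis--Whitney inequality directly. Saying that ``the affine argument of Step~2 then transfers with only multiplicative $O(1)$ Jacobian corrections'' hides precisely the nontrivial part of the proof. The argument in \cite{BHT10} handles this by an induction-on-scales scheme: one decomposes each $S_i$ into pieces of a smaller diameter, on which the surfaces are closer to affine, applies an approximate version of the flat estimate at that scale, and sums using almost-orthogonality. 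The hypothesis $\mathrm{diam}(S_i)\lesssim a$ together with condition~(ii) is what guarantees that the normals vary by at most $O(a)$ across each $S_i$, so transversality persists uniformly through the induction; but this is used to run the iteration, not to collapse the problem to a single change of variables. If you want to pursue a direct (non-inductive) route, you would need to show that the nonlinear constraint $\lambda_1+\lambda_2\in S_3$ can be parametrized so that the resulting integral is \emph{exactly} of Loomis--Whitney type up to bounded Jacobians, which requires a more careful foliation argument than what you have written.
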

We first claim the following. 
\begin{lemm}\label{angle_est_A}
Let $N_1$, $N_2$, $N_3$, $A\in 2^{\N_0}$, $A\ge 64$, 
and $16 \le |j_1-j_2|\le 32$.  
Assume that 
$\xi_1=(|\xi_1|\cos \theta_1, |\xi_1|\sin \theta_1)$ 
and $\xi_2=(|\xi_2|\cos \theta_2, |\xi_2|\sin \theta_2)$ 
satisfy $|\xi_1|\sim N_1$, $|\xi_2|\sim N_2$, $|\xi_1+\xi_2|\sim N_3$, 
$\theta_1\in \Theta_{j_1}^A$, and $\theta_2\in \Theta_{j_2}^A$. 
If $N_3\lesssim N_1\sim N_2$, then $A\gtrsim \frac{N_1}{N_3}$.
\end{lemm}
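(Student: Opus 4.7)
The plan is purely geometric: the hypothesis $|\xi_1+\xi_2|\sim N_3$ with $|\xi_1|,|\xi_2|\sim N_1$ forces $\xi_1$ and $\xi_2$ to be nearly antipodal, and the sector condition $16\le|j_1-j_2|\le 32$ forbids them from being too close to antipodal unless $A$ is large enough.

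First, I would write
\begin{equation*}
|\xi_1+\xi_2|^2=(|\xi_1|-|\xi_2|)^2+2|\xi_1||\xi_2|\bigl(1+\cos(\theta_2-\theta_1)\bigr).
\end{equation*}
Since $|\xi_1|\sim|\xi_2|\sim N_1$ and $|\xi_1+\xi_2|\sim N_3$, this yields
\begin{equation*}
1+\cos(\theta_2-\theta_1)\lesssim \frac{N_3^2}{N_1^2},
\end{equation*}
so, setting $\alpha$ to be the deviation of $|\theta_2-\theta_1|$ from $\pi$ (viewed mod $2\pi$), one obtains $\alpha\lesssim N_3/N_1$ from the elementary bound $1-\cos x\gtrsim x^2$ for $|x|\le\pi$.

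Next, I would unpack the sector condition. Because $\omega_{j_i}^A$ is supported in $\Theta_{j_i}^A$, which consists of two arcs of width $\le 4\pi/A$ centered at $\pi j_i/A$ and at $\pi j_i/A-\pi$, one may write
\begin{equation*}
\theta_i = \varepsilon_i\pi+\frac{\pi j_i}{A}+\delta_i,\qquad \varepsilon_i\in\{0,-1\},\ |\delta_i|\le\frac{2\pi}{A}\quad(i=1,2).
\end{equation*}
Thus
\begin{equation*}
\theta_2-\theta_1 = (\varepsilon_2-\varepsilon_1)\pi+\frac{\pi(j_2-j_1)}{A}+(\delta_2-\delta_1).
\end{equation*}
Since $A\ge 64$ and $|j_2-j_1|\le 32$, we have $|\pi(j_2-j_1)/A|\le\pi/2$ and $|\delta_2-\delta_1|\le 4\pi/A$; hence the only way for $\theta_2-\theta_1$ to lie within $\alpha$ of $\pm\pi$ (mod $2\pi$), with $\alpha$ small, is that $\varepsilon_2-\varepsilon_1=\pm 1$ and
\begin{equation*}
\Bigl|\frac{\pi(j_2-j_1)}{A}+(\delta_2-\delta_1)\Bigr|=\alpha\lesssim\frac{N_3}{N_1}.
\end{equation*}

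Finally, the triangle inequality combined with $|j_1-j_2|\ge 16$ gives
\begin{equation*}
\frac{16\pi}{A}\le\frac{\pi|j_2-j_1|}{A}\le\alpha+|\delta_2-\delta_1|\lesssim\frac{N_3}{N_1}+\frac{1}{A},
\end{equation*}
and absorbing the $1/A$ term on the left (which is possible because its coefficient is bounded by a fixed constant, say $4\pi$, strictly smaller than $16\pi$) yields $1/A\lesssim N_3/N_1$, i.e.\ $A\gtrsim N_1/N_3$. The only mildly delicate point is the bookkeeping of the two antipodal arcs composing $\Theta_j^A$ and confirming that the sizes of $|j_2-j_1|/A$, $|\delta_2-\delta_1|$, and $\alpha$ are all comparable to or smaller than $1$ so that the triangle inequality can indeed be used cleanly; this is where the precise numerical assumptions $A\ge 64$ and $16\le|j_1-j_2|\le 32$ are exploited.
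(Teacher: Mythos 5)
Your proof is correct and follows essentially the same route as the paper's: both arguments quantify that $|\xi_1+\xi_2|\sim N_3\lesssim N_1\sim N_2$ forces $\xi_1$ and $\xi_2$ to be nearly antipodal, while the condition $16\le|j_1-j_2|\le 32$ keeps the angle between them at distance $\gtrsim A^{-1}$ from $\pi$ (after first disposing of the trivial case $N_3\sim N_1$). The paper implements this by rotating so that $\theta_2\in\Theta_0^A$ and comparing second components via $|\sin\theta_1|\ge \tfrac{7\pi}{A}$ and $|\sin\theta_2|\le \tfrac{2\pi}{A}$, whereas you use the law of cosines on the full angle $\theta_2-\theta_1$; the two computations are equivalent.
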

\begin{proof}
If $N_3\sim N_1$, then the claim is clearly true,
since $\frac{N_1}{N_3}\sim 1$.
So, we assume $N_3\ll N_1$. 
By using the rotation, 
we can assume $\theta_1\in \Theta_{j_1-j_2}^A$ 
and $\theta_2\in \Theta_{0}^A$. 
Then, it follows from $16\le |j_1-j_2|\le 32$, and \eqref{AP1a} that
\[
|\sin \theta_1|=|\sin (\pi +\theta_1)|
\ge \sin \left(\frac{|j_1-j_2|-2}{A}\pi\right)
\ge \sin \left(\frac{14}{A}\pi\right)
\ge \frac{7}{A} \pi
\]
and
\[
|\sin \theta_2|=|\sin (\pi +\theta_2)|
\le \sin \frac{2}{A}\pi
\le \frac{2}{A} \pi.
\]
We therefore obtain
\begin{align*}
|\xi_1+\xi_2|
&\ge ||\xi_1|\sin \theta_1+|\xi_2|\sin \theta_2| \\
&\ge |\xi_1| (|\sin \theta_1| - |\sin \theta_2|)
- |\xi_1+\xi_2| |\sin \theta_2| \\
&\ge |\xi_1|\left( \frac{7}{A}\pi-\frac{2}{A}\pi\right) - |\xi_1+\xi_2| \frac{2}{A} \pi,
\end{align*}
which yields that
\[
N_3 \sim |\xi_1+\xi_2| \gtrsim \frac{|\xi_1|}{A} \sim \frac{N_1}{A}.
\]
This shows the desired bound.
\end{proof}

We then present the proof of Proposition \ref{thm2.6}.

\begin{proof}[Proof of Proposition \ref{thm2.6}]
By Lemma~\ref{modul_est_2} and $L_{\max}\ll N_{\max}^2$, 
we can assume $N_3\lesssim N_1\sim N_2$. 
We divide the proof into the following two cases:
\begin{equation*}
\textnormal{(I)} \quad  L_{\textnormal{max}} \geq A^{-2} N_1^3N_3^{-1}, \qquad 
\textnormal{(\II)} \quad L_{\textnormal{max}} \leq  A^{-2} N_1^3N_3^{-1}.
\end{equation*}
We first consider the case $\textnormal{(I)}$. 
For the case $L_{\max}=L_3$, 
by using the (\ref{ABSE1}), we have
\begin{align*}
& \|Q_{L_3}^{-\sigma} P_{N_3}(R_{j_1}^A Q_{L_1}^{1}P_{N_1}u_{1}\cdot 
R_{j_2}^A Q_{L_2}^{-1}P_{N_2}u_{2})\|_{L^{2}_{tx}}  \\
\lesssim & \left(\frac{N_1}{N_3A}\right)^{\frac{1}{2}}  L_1^{\frac{1}{2}}L_2^{\frac{1}{2}}  
\|R_{j_1}^A Q_{L_1}^{1}P_{N_1}u_{1}\|_{L^2_{tx}}   \|R_{j_2}^A Q_{L_2}^{-1}P_{N_2}u_{2}\|_{L^2_{tx}}\\
\lesssim &  A^{\frac{1}{2}}N_1^{-1} L_1^{\frac{1}{2}}L_2^{\frac{1}{2}} L_3^{\frac{1}{2}}  
\|R_{j_1}^A Q_{L_1}^{\sigma_1}P_{N_1}u_{1}\|_{L^2_{tx}}   \|R_{j_2}^A Q_{L_2}^{\sigma_2}P_{N_2}u_{2}\|_{L^2_{tx}},
\end{align*}
which shows \eqref{0609}.
For $L_{\max}=L_1$, by the duality argument, H\"{o}lder inequality, and (\ref{ABSE2}), we have
\begin{align*}
& \|Q_{L_3}^{\sigma} P_{N_3}(R_{j_1}^A Q_{L_1}^{1}P_{N_1}u_{1}\cdot 
R_{j_2}^A Q_{L_2}^{-1}P_{N_2}u_{2})\|_{L^{2}_{tx}}\\
\sim & \sup_{\|u_3 \|_{L^2} =1} \left| \int_{\R \times \R^2} (R_{j_1}^A Q_{L_1}^{1}P_{N_1}u_{1}) \,  
(R_{j_2}^A Q_{L_2}^{-1}P_{N_2}u_{2}) \, (Q_{L_3}^{-\sigma} P_{N_3} u_3 ) \, dxdt \right|\\
\lesssim & \| R_{j_1}^A Q_{L_1}^{1}P_{N_1}u_{1} \|_{L^2} \\
&\qquad \times
\sup_{\|u_3 \|_{L^2} =1} \|R_{j_1}^A Q_{L_1}^{-1} P_{N_1}(R_{j_2}^A Q_{L_2}^{-1}P_{N_2}u_{2}\cdot 
 Q_{L_3}^{-\sigma}P_{N_3}u_{3})\|_{L^{2}_{tx}}  \\
\lesssim & A^{\frac{1}{2}}
N^{-1}_1 L_1^{\frac{1}{2}} L_2^{\frac{1}{2}}L_3^{\frac{1}{2}} 
\| R_{j_1}^A Q_{L_1}^{1}P_{N_1}u_{1} \|_{L^2_{tx}} 
\|R_{j_2}^A Q_{L_2}^{-1}P_{N_2}u_{2}\|_{L^2_{tx}}.
\end{align*}
The case $L_{\max}=L_2$ can be treated similarly. 

For $\textnormal{(\II)}$, by Plancherel's theorem and the duality argument, \eqref{0609} is verified by the following estimate:
\begin{equation}
\begin{split}
& \left| \int_{\R \times \R \times \R^2 \times \R^2}  f_1 (\ta_1, \xi_1) f_2 (\ta_2, \xi_2) f_3 (\tau_1+\tau_2, \xi_1+ \xi_2) d\ta_1 d\ta_2 d\xi_1 d \xi_2 \right| \\
&\lesssim 
A^{\frac{1}{2}} N_1^{-1} ( L_1 L_2 L_3)^\frac{1}{2}  
\|f_1\|_{L^2_{\tau \xi}} \|f_2 \|_{L^2_{\tau \xi}}  \|f_3 \|_{L^2_{\tau \xi}}
%\label{0609-2}
\notag
\end{split}
\end{equation}
where $f_{1}=\F_{tx} [R_{j_1}^A Q_{L_1}^{1}P_{N_1}u_{1}]$, 
$f_{2}=\F_{tx} [R_{j_2}^A Q_{L_2}^{-1}P_{N_2}u_{2}]$,  
and $f_{3}=\F_{tx} [Q_{L_3}^{-\sigma}P_{N_3}u_{3}]$.

%To show \eqref{0609-2}, we first decompose $f_1$ by thickened circular 
%localization characteristic functions.
%Namely,
%set
%\[
%\delta = A^{-1} N_1, \quad
%\mathbb{S}_\delta^{\xi^0} = \{ (\ta, \xi ) \in \R \cross \R^2 \ | \ \xi^0 \leq \LR{\xi}\leq \xi^0 + \delta \},
%\]
%where $[s]$ denotes the maximal integer which is not greater than $s \in \R$.
%Then, we have
%% $\left\{ {\mathbf 1}_{\mathbb{S}_{\delta}^{{N_1} + k \delta}} \right\}_{k=0}^{\left[\frac{N_1}{\delta}\right]+1}$,
%\begin{equation*}
%f_1 = \sum_{k= 0}^{\left[\frac{N_1}{\delta}\right]+1} 
%{\mathbf 1}_{\mathbb{S}_{\delta}^{N_1 + k \delta} } f_1.
%\end{equation*}
%Thanks to $L_{\textnormal{max}} \leq  A^{-2} N_1^3N_3^{-1}$, for each 
%$f_{1, k} := {\mathbf 1}_{\mathbb{S}_{\delta}^{N_1 + k \delta} } f_1$ with fixed $k \in [0, \left[N_1/\delta\right]+1]$, 
%we may assume that $\supp f_2$ is confined to $\mathbb{S}_\delta^{\xi^0(k)}$ with some 
%fixed $\xi^0(k) \sim N_2$.
%Indeed,
Let $(\tau_i,\xi_i) \in \supp f_i$ for $i=1,2$ and $(\tau_1+\tau_2, \xi_1+\xi_2) \in \supp f_3$.
We write $\xi_i$ as
\[
\xi_i = (|\xi_i| \cos \theta_i, |\xi_i| \sin \theta_i).
\]
The assumption $16 \le |j_1-j_2| \le 32$ yields that $|\angle (\xi_1,\xi_2)|$ is confined to a set of measure $\sim A^{-1}$.
If $A \gg 1$,
from Lemma \ref{element1} with \eqref{AP1} and \eqref{AP2},
the range of $|\xi_2|$ is restricted to a set of measure
$\sim \frac{L_{\max}}{N_1}$
for any $(\ta_1, \xi_1) \in \supp f_1$.
Here,
the assumption (\II) and Lemma \ref{angle_est_A} yield that
\[
\frac{L_{\max}}{N_1}
\le A^{-2} N_1^2 N_3^{-1}
\lesssim A^{-1} N_1=: \delta.
\]
Namely, for any fixed $(\tau_1,\xi_1) \in \supp f_1$,
$|\xi_2|$ is restricted to a set of measure
$\sim \delta$.
If $A \sim 1$, we set $\delta := N_1$.
From $(\tau_2, \xi_2) \in \supp f_2$ 
and $N_1\sim N_2$,
$|\xi_2|$ is trivially restricted to a set of measure $\sim \delta$.

Next,
we decompose $f_1$ by thickened circular 
localization characteristic functions.
Namely,
by setting
\[
\mathbb{S}_\delta^R = \{ (\ta, \xi ) \in \R \cross \R^2 \ | \ R \leq \LR{\xi}\leq R + \delta \}
\]
for $R>0$,
we have
\begin{equation*}
f_1 = \sum_{k= 0}^{\left[\frac{N_1}{\delta}\right]+1} 
{\mathbf 1}_{\mathbb{S}_{\delta}^{N_1 + k \delta} } f_1,
\end{equation*}
where $[s]$ denotes the maximal integer which is not greater than $s \in \R$.
For $k=0,1, \dots, \left[\frac{N_1}{\delta}\right]+1$,
we may assume that $\supp f_2$ is confined to $\mathbb{S}_\delta^{\xi^0(k)}$ with some 
fixed $\xi^0(k) \sim N_2$.

%We note that
%\begin{equation}\label{AN13_est}
%A^{-2}N_1N_3^{-1}\lesssim A^{-1}
%\end{equation}
%by Lemma~\ref{angle_est_A}. 
%While, by symmetry, if $(\ta_2, \xi_2) \in {\mathbf 1}_{\mathbb{S}_{\delta}^{\xi^0 (k)} } $ with fixed $k \in [0, \left[N_1/\delta\right]+1]$, $|\xi_1|$ is confined to a set of measure $\sim \delta$. 
%Thus we can assume that $f_1$ and $f_2$ in \eqref{0609-2} satisfy 
%$\supp f_1 \subset \mathbb{S}_{\delta}^{N_1 + k \delta}$ 
%and $\supp f_2 \subset \mathbb{S}_\delta^{\xi^0(k)}$ with fixed $k$. 
Let
\begin{equation*}
C_{\delta}(\xi') := \{ (\ta, \xi) \in \R^3 \ | \ | \xi -\xi' | \leq \delta \} .
\end{equation*}
We apply a harmless decomposition to $f_1$, $f_2$, $f_3$ and assume that 
there exist $\xi^0_{f_i} \in \R^2$ such that 
$\supp f_i \subset C_{\delta}(\xi_{f_i}^0)$
for $i=1,2,3$.

We apply the same strategy as 
that of the proof of 
Proposition 4.4 in \cite{BHHT09}. 
Applying the transformation $\ta_1 = -|\xi_1|^2 + c_1$ and $\ta_2 = 
|\xi_2|^2 + c_2$ and Fubini's theorem, we find that it suffices to prove
\begin{equation}
\begin{split}
& \left| \int_{\R^2 \times \R^2}  f_1 (\phi_{c_1}^{-} (\xi_1)) f_2 (\phi_{c_2}^{+} (\xi_2)) 
f_3 (\phi_{c_1}^{-} (\xi_1) + \phi_{c_2}^{+} (\xi_2))  d\xi_1 d \xi_2 \right| \\
&\lesssim 
A^{\frac{1}{2}} N_1^{-1} 
\|f_1 \circ \phi_{c_1}^{-} \|_{L_\xi^2} \|f_2 \circ \phi_{c_2}^{+} \|_{L_\xi^2}  \|f_3 \|_{L^2_{\tau \xi}},
\label{2017-06-10a}
\end{split}
\end{equation}
where $f_3(\ta, \xi)$ is supported in $c_0 \leq \ta - \sigma |\xi|^2 \leq c_0 +1$ and 
\begin{equation}
\phi_{c_1}^{-} (\xi_1) = (-|\xi_1|^2 + c_1, \xi_1),\ \ 
\phi_{c_2}^{+} (\xi_2) = (|\xi_2|^2 + c_2, \xi_2).
\label{phi2}
\end{equation}
We use the scaling $(\ta, \, \xi) \to (N_1^2 \ta , \, N_1 \xi)$ to define
\begin{align*}
g_1 (\ta_1, \xi_1) &= f_1 (N_1^2 \ta_1, N_1 \xi_1), \\
g_2 (\ta_2, \xi_2) &= f_2 (N_1^2 \ta_2, N_1 \xi_2), \\ 
g_3 (\ta, \xi) &= f_3(N_1^2 \ta, N_1 \xi).
\end{align*}
If we set ${\widetilde{c_k}} = N_1^{-2} c_k$, inequality \eqref{2017-06-10a} reduces to
\begin{equation}
\begin{split}
 &\left| \int_{\R^2 \times \R^2}  g_1 (\phi_{\widetilde{c}_1}^{-} (\xi_1)) g_2 (\phi_{\widetilde{c}_2}^{+} (\xi_2)) 
g_3 (\phi_{\widetilde{c}_1}^{-} (\xi_1) + \phi_{\widetilde{c}_2}^{+} (\xi_2))  d\xi_1 d \xi_2 \right| \\
& \quad \lesssim 
A^{\frac{1}{2}} N_1^{-1} 
\| g_1 \circ \phi_{\widetilde{c}_1}^{-} \|_{L_\xi^2} \| g_2 \circ \phi_{\widetilde{c}_2}^{+} \|_{L_\xi^2}  \| g_3 \|_{L^2_{\tau \xi}}.
%\label{2017-06-10b}
\notag
\end{split}
\end{equation}
Note that $g_3$ is supported in $S_3(N_1^{-2})$, where 
\begin{equation*}
S_3 (N_1^{-2}) = \left\{ (\ta, \xi) \in C_{N_1^{-1} \delta}(N_1^{-1} \xi_{f_3}^0) 
\ | \ \sigma|\xi|^2 +\frac{c_0}{N_1^2} \leq \ta  \leq \sigma|\xi|^2 +\frac{c_0+1}{N_1^2} \right\}.
\end{equation*}
By density and duality arguments,
it suffices to show for continuous 
$g_1$ and $g_2$ that
\begin{equation}
\| g_1 |_{S_1} * g_2 |_{S_2} \|_{L^2(S_3 (N_1^{-2}))} 
\lesssim A^{\frac{1}{2}} N_1^{-1} 
\| g_1 \|_{L^2(S_1)} \| g_2 \|_{L^2(S_2)},
\label{2017-06-10c}
\end{equation}
where $S_1$ and $S_2$ denote the following surfaces 
\begin{align*}
S_1 &= \{ \phi_{\widetilde{c}_1}^{-} (\xi_1) | \ \xi _1 \in \R^2 \} \cap 
C_{N_1^{-1} \delta}(N_1^{-1} \xi_{f_1}^0), \\
S_2 &=
\{ \phi_{\widetilde{c}_2}^{+} (\xi_2) | \ \xi _2 \in \R^2 \} \cap
C_{N_1^{-1} \delta}(N_1^{-1} \xi_{f_2}^0).
\end{align*}
Then, \eqref{2017-06-10c} is immediately obtained by
\begin{equation}
\| g_1 |_{S_1} * g_2 |_{S_2} \|_{L^2(S_3)} 
\lesssim A^{\frac{1}{2}}  
\| g_1 \|_{L^2(S_1)} \| g_2 \|_{L^2(S_2)},
\label{2017-06-10d}
\end{equation}
where 
\begin{equation*}
S_3 = \Big\{ (\psi (\xi), \xi) \in C_{N_1^{-1} \delta}(N_1^{-1} \xi_{f_3}^0) 
\ | \ \psi(\xi) =  \sigma|\xi|^2 +\frac{c_0}{N_1^2} \Big\}.
\end{equation*}
Since $|N_1^{-1} \xi_{f_1}^0| \sim |N_1^{-1} \xi_{f_2}^0| \sim 1$,
$|N_1^{-1} \xi_{f_3}^0| \sim N_1^{-1}N_3\lesssim 1$,
and $N^{-1} \delta \sim A^{-1}$,
we have
\[
\textnormal{diam}(S_i) \lesssim A^{-1}
\]
for $i=1,2,3$.
By applying a harmless decomposition,
we can assume 
\begin{equation}
\textnormal{diam}(S_i) \leq 2^{-10} \langle \sigma \rangle^{-1} A^{-1}
\label{2017-06-10e}
\end{equation}
for $i=1,2,3$.

For any $\lambda_i \in S_i$, $i=1,2,3$, there exist $\xi_1$, $\xi_2$, $\xi$ such that
\begin{equation*}
\lambda_1=\phi_{{{\widetilde{c}_1}}}^{-} (\xi_1), \quad \lambda_2 =  \phi_{{{\widetilde{c}_2}}}^{+} (\xi_2), \quad \lambda_3 = (\psi (\xi), \xi).
\end{equation*}
By \eqref{phi2}, the unit normals ${\mathfrak{n}}_i$ on $\lambda_i$ are written as
\begin{align*}
& {\mathfrak{n}}_1(\lambda_1) = \frac{1}{\sqrt{1+4|\xi_1|^2}} 
\left(1, \ 2\xi_1^{(1)}, \ 2\xi_1^{(2)} \right), \\
& {\mathfrak{n}}_2(\lambda_2) = \frac{1}{\sqrt{1+4|\xi_2|^2}} 
\left(1, \ -2\xi_2^{(1)}, \ -2\xi_2^{(2)} \right), \\
& {\mathfrak{n}}_3(\lambda_3) = \frac{1}{\sqrt{1+4\sigma^2|\xi|^2}} \ 
\left(-1, \ 2 {\sigma} \xi^{(1)}, \ 2 {\sigma} \xi^{(2)} \right),
\end{align*}
where $\xi^{(i)}$ $(i=1,2)$ denotes the $i$-th component of $\xi$. 
Clearly, the surfaces $S_1$, $S_2$, $S_3$ satisfy the following 
H\"{o}lder condition.
\begin{equation}
\sup_{\lambda_i, \lambda_i' \in S_i}
\bigg(
\frac{|\mathfrak{n}_i(\lambda_i) - 
\mathfrak{n}_i(\lambda_i')|}{|\lambda_i - \lambda_i'|}
+ \frac{|\mathfrak{n}_i(\lambda_i) \cdot (\lambda_i - \lambda_i')|}{|\lambda_i - \lambda_i'|^2}
\bigg)
\leq 2^3.\label{aiueo1}
\end{equation}
We may assume that there exist $\xi_1', \xi_2', \xi' \in \R^2$ such that
\begin{equation*}
\xi_1' + \xi_2' = \xi', \quad \phi_{{{\widetilde{c}_1}}}^{-} (\xi_1') \in S_1, \ \phi_{{{\widetilde{c}_2}}}^{+} (\xi_2') \in S_2, \ (\psi (\xi'), \xi') \in S_3,
\end{equation*}
otherwise the left-hand side of \eqref{2017-06-10d} vanishes. 
Let $\lambda_1' = \phi_{{{\widetilde{c}_1}}}^{-}(\xi_1')$, $\lambda_2' = \phi_{{{\widetilde{c}_2}}}^{+} (\xi_2')$, $\lambda_3' =  (\psi (\xi'), \xi') $. 
For any $\lambda_1 = \phi_{{{\widetilde{c}_1}}}^{-} (\xi_1) \in S_1$, 
we deduce from $\lambda_1$, $\lambda_1' \in S_1$, \eqref{2017-06-10e}, and \eqref{aiueo1} that
\begin{equation}
|{\mathfrak{n}}_1(\lambda_1) - {\mathfrak{n}}_1(\lambda_1')| \leq 2^{-7} 
\langle \sigma \rangle^{-1} A^{-1}.\label{2017-06-10f}
\end{equation}
Similarly, for any $\lambda_2 \in S_2$ and $\lambda_3 \in S_3$, we have
\begin{align}
& |{\mathfrak{n}}_2(\lambda_2) - {\mathfrak{n}}_2(\lambda_2')| \leq 2^{-7} 
\langle \sigma \rangle^{-1}  A^{-1},
\label{2017-06-10g}\\
& |{\mathfrak{n}}_3(\lambda_3) - {\mathfrak{n}}_3(\lambda_3')| \leq 2^{-7} 
\langle \sigma \rangle^{-1} A^{-1}.\label{2017-06-10h}
\end{align}
From \eqref{2017-06-10e} and \eqref{aiueo1}, 
once the following transversality condition
\begin{equation} 
 \frac{\langle \sigma \rangle^{-1}  A^{-1}}{32} \leq |\textnormal{det} N(\lambda_1, \lambda_2, \lambda_3)| \quad 
\textnormal{for any} \ \lambda_i \in S_i
\label{trans}
\end{equation}
is verified, we obtain the desired estimate \eqref{2017-06-10d}  by applying Proposition \ref{prop2.7} with $a = \langle \sigma \rangle^{-1} A^{-1}/32$.%
\footnote{Strictly speaking, we need to construct a larger set $S_i^\ast$ and replace $S_i$ by $S_i^\ast$.
However, since $S_i$ is a graph in this setting,
this is a sight modification.
Indeed, by setting $U_1 = C_{2N_1^{-1} \delta}(N_1^{-1} \xi_{f_1}^0)$ and $S_1^\ast = \{ \phi_{\widetilde{c}_1}^{-} (\xi_1) | \ \xi _1 \in \R^2 \} \cap U_1$, we have $\dist (S_1, U_1^c) \ge 2 N^{-1} \delta = 2 A^{-1}$.
Similarly, we can set $S_2^\ast$ and $S_3^\ast$.
Moreover, since estimates with $S_i$ replaced by $S_i^\ast$ are similarly obtained, we omit the details.
}

Finally, we show \eqref{trans}.
For $\lambda_1' = \phi_{\widetilde{c}_1}^{-} (\xi_1')$, $\lambda_2' = \phi_{\widetilde{c}_2}^{+} (\xi_2')$, $\lambda_3' =  (\psi(\xi'), \xi') $ and 
$\xi_1'+\xi_2' = \xi'$,
a direct calculation shows that
\begin{align*}
|\textnormal{det} N(\lambda_1', \lambda_2', \lambda_3')| \geq & 
\frac{\langle \sigma \rangle^{-1}}{\LR{2 |\xi'|}} 
\frac{1}{\LR{2|\xi_1'|}} \frac{1}{\LR{2|\xi_2'|}}  \left|\textnormal{det}
\begin{pmatrix}
1 & 1 & - 1 \\
 \xi_1'^{(1)}  &   -\xi_2'^{(1)}  & {\sigma} \xi'^{(1)} \\
 \xi_1'^{(2)}  & -\xi_2'^{(2)}   & {\sigma} \xi'^{(2)} 
\end{pmatrix} \right| \notag \\
\geq & \frac{1}{8}\langle \sigma \rangle^{-1}
 \left| \frac{\xi_1'^{(1)}  \xi_2'^{(2)}  - \xi_1'^{(2)} \xi_2'^{(1)}}{|\xi_1'||\xi_2'|} \right|  \notag\\
\geq &  
\frac{\langle \sigma \rangle^{-1}A^{-1}}{16}.
\end{align*}
Combining this bound with \eqref{2017-06-10f}--\eqref{2017-06-10h},
we obtain \eqref{trans}.
This concludes the proof of Proposition \ref{thm2.6}.
\end{proof}

%

%
%
%
%%%%%%%%%%%%%%%%%%%%%%%%%%%%%%%%%%%%%%%%%%%%%%%%%%%%%%%%%%%%%%%%%%%%%%%%%%%%%%%%%
%%%%%%%%%%%%%%%%%%%%%%%%%%%%%%%%%%%%%%%%%%%%%%%%%%%%%%%%%%%%%%%%%%%%%%%%%%%%%%%%%
%%%%%%%%%%%%%%%%%%%%%%%%%%%%%%%%%%%%  Section 4  %%%%%%%%%%%%%%%%%%%%%%%%%%%%%%%%%%
%%%%%%%%%%%%%%%%%%%%%%%%%%%%%%%%%%%%%%%%%%%%%%%%%%%%%%%%%%%%%%%%%%%%%%%%%%%%%%%%%
%%%%%%%%%%%%%%%%%%%%%%%%%%%%%%%%%%%%%%%%%%%%%%%%%%%%%%%%%%%%%%%%%%%%%%%%%%%%%%%%%
%
\section{Proof of bilinear estimates for $d=3$\label{be_for_3d}}
In this section, 
we prove Proposition~\ref{key_be} for $d=3$. 
We first give the operators with respect to angular variables  introduced in \cite{BH11}. 
\begin{defn}[\cite{BH11}]
For each $A \in \N$, $\{\omega_A^j \}_{j \in \Omega_A} $ denotes a set of spherical caps of ${\BBB S}^2$ with the following properties:\\
(i) The angle $\angle{(x,y)}$ between 
any two vectors in $x$, $y \in \omega_A^j$ satisfies
\begin{equation*}
\left| \angle{(x,y)} \right| \leq A^{-1}.
\end{equation*}
(ii) Characteristic functions $\{ {\mathbf 1}_{\omega_A^j} \}$ satisfy
\begin{equation*}
1 \leq \sum_{j \in \Omega_A} {\mathbf 1}_{\omega_A^j}(x) \leq 3
\end{equation*}
for any $x \in {\BBB S}^2$.

We define the function
\begin{equation*}
\alpha (j_1,j_2) = \inf \left\{ \left| \angle{( \pm x, y)} \right| : \ x \in \omega_A^{j_1}, \ y \in \omega_A^{j_2} \right\},
\end{equation*}
which measures the minimal angle between any two straight lines through the spherical caps $\omega_A^{j_1}$ and 
$\omega_A^{j_2}$, respectively. It is easily observed that for any fixed $j_1 \in \Omega_A$ there exist only a finite number of $j_2 \in \Omega_A$ which satisfies $\alpha (j_1,j_2) \sim A^{-1}$.

Based on the above construction, for each $j \in \Omega_A$,
we define 
\begin{equation*}
{\mathfrak{D}}_{j}^A = \left\{ (\ta, \xi) \in \R \cross (\R^3 \setminus \{0\}) \, : \, 
 \frac{\xi}{|\xi|} \in \omega_A^j \right\}
\end{equation*}
and the corresponding localization operator
\begin{equation*}
\F (R_j^A u) (\tau, \xi) = \frac{\chi_{\omega_j^A} (\frac{\xi}{|\xi|})}{\chi  (\frac{\xi}{|\xi|})} \F u (\tau , \xi).
\end{equation*}
\end{defn}
The following estimates are three dimensional version of 
Propositions~\ref{BSE_HHL}, ~\ref{BSE_HHL_2}, 
and ~\ref{thm2.6}. 
\begin{prop}\label{BSE_HHL_3d}
Let $N_1$, $N_2$, $N_3$, $L_1$, $L_2$, $L_3$, $A\in 2^{\N_0}$, and
$j_1$, $j_2\in \Omega_A$.
We assume $A\ge 64$, $\alpha (j_1,j_2)\lesssim A^{-1}$, and $N_3\lesssim N_1\sim N_2$. 
Then,  
we have the following estimate:
\begin{align}
\begin{split}
&\|P_{N_3}(R_{j_1}^AQ_{L_1}^{1}P_{N_1}u_1\cdot R_{j_2}^AQ_{L_2}^{-1}P_{N_2}u_2)\|_{L^2_{tx}(\R\times \R^3)}\\
&\hspace{3ex}
\lesssim \frac{N_1}{N_3^{\frac{1}{2}}A}
L_1^{\frac{1}{2}}L_2^{\frac{1}{2}}
\|R_{j_1}^AQ_{L_1}^{1}P_{N_1}u_1\|_{L^2_{tx}(\R\times \R^3)}
\|R_{j_2}^AQ_{L_2}^{-1}P_{N_2}u_2\|_{L^2_{tx}(\R\times \R^3)}.
\end{split} \label{ABSE1_3d}
\end{align}
\end{prop}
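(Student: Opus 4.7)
The plan is to adapt the two dimensional argument given in the proof of Proposition~\ref{BSE_HHL} to the three dimensional setting. First I would dispose of the easy regime $A \lesssim N_1/N_3$: in this case the right-hand side of \eqref{ABSE1_3d} satisfies $\frac{N_1}{N_3^{1/2}A} \gtrsim N_3^{1/2}$, so \eqref{ABSE1_3d} follows from H\"older's inequality, Bernstein's inequality applied to $P_{N_3}$ (which contributes the factor $N_3^{1/2}$ after going from $L^{4/3}$ to $L^2$, or equivalently via Plancherel), and Corollary~\ref{Bo_Stri} with $(p,q)=(4,3)$. Hence we may assume $A \gg N_1/N_3$.

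Next I would pass to the duality formulation and reduce \eqref{ABSE1_3d} to the estimate
\[
\left|\int_{\Omega} f_1(\tau_1,\xi_1) f_2(\tau_2,\xi_2) f(\tau_1+\tau_2,\xi_1+\xi_2)\, d\tau_1 d\tau_2 d\xi_1 d\xi_2 \right|
\lesssim \frac{N_1}{N_3^{1/2}A} L_1^{1/2}L_2^{1/2} \|f_1\|_{L^2}\|f_2\|_{L^2}\|f\|_{L^2},
\]
where $f_i$ are the Fourier transforms of the localized pieces and $\Omega$ encodes the size, angular, and modulation constraints. By Cauchy-Schwarz in $(\tau_2,\xi_2)$, it suffices to bound the volume $|E(\tau_1,\xi_1)|$ of the slice
\[
E(\tau_1,\xi_1) = \{ (\tau_2,\xi_2) \in \Omega \mid \tau_1,\xi_1 \text{ fixed}\}
\]
uniformly by $\frac{N_1^2 L_1 L_2}{N_3 A^2}$.

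To estimate $|E(\tau_1,\xi_1)|$ I would first apply a rotation $\Phi \in SO(3)$ that maps the cap $\omega_A^{j_2}$ onto a neighborhood of the north pole $e_1 = (1,0,0)$; since $\alpha(j_1,j_2) \lesssim A^{-1}$, after rotation both $\xi_1/|\xi_1|$ and $\pm \xi_2/|\xi_2|$ lie in a spherical cap of angular radius $\lesssim A^{-1}$ about $e_1$. Writing $\widetilde\xi_i = \Phi \xi_i$, this forces $|\widetilde\xi_i^{(k)}| \lesssim N_i/A$ for $k=2,3$, and from $A \gg N_1/N_3$ we then get $|\widetilde\xi_1^{(k)} + \widetilde\xi_2^{(k)}| \ll N_3$ for $k=2,3$, which combined with $|\xi_1+\xi_2|\sim N_3$ forces $|\widetilde\xi_1^{(1)} + \widetilde\xi_2^{(1)}| \sim N_3$. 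This last point is exactly what makes the Jacobian nondegenerate.

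The final step is the change of variables $(\tau_1,\tau_2,\widetilde\xi_1,\widetilde\xi_2) \mapsto (c_1,c_2,\mu,w,z)$ with
\[
c_1 = \tau_1+|\widetilde\xi_1|^2, \quad c_2 = \tau_2-|\widetilde\xi_2|^2, \quad
\mu = c_1+c_2-|\widetilde\xi_1|^2+|\widetilde\xi_2|^2, \quad w = \widetilde\xi_1+\widetilde\xi_2, \quad z = (\widetilde\xi_2^{(2)}, \widetilde\xi_2^{(3)}),
\]
so that the Jacobian is $|\widetilde\xi_1^{(1)} + \widetilde\xi_2^{(1)}| \sim N_3$, $c_1,c_2$ range over sets of size $L_1,L_2$ respectively, and $z$ ranges over a set of area $\lesssim (N_2/A)^2$. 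This gives
\[
|E(\tau_1,\xi_1)| \lesssim \frac{L_1 L_2 (N_2/A)^2}{N_3} \sim \frac{N_1^2 L_1 L_2}{N_3 A^2},
\]
yielding the desired bound after taking the square root. The main novelty compared to the two dimensional case is the use of a general rotation in $SO(3)$ rather than a planar rotation, together with the fact that the transverse variable $z$ is now two dimensional, which is precisely what upgrades the factor $(N_1/(N_3 A))^{1/2}$ in \eqref{ABSE1} to $N_1/(N_3^{1/2} A)$ in \eqref{ABSE1_3d}; I expect no serious obstacle beyond verifying that a single rotation suffices to align both caps simultaneously, which follows from the angular separation hypothesis $\alpha(j_1,j_2) \lesssim A^{-1}$.
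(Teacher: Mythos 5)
Your argument is correct and arrives at the right numerology, but it takes a different route from the paper. The paper does not prove Proposition~\ref{BSE_HHL_3d} by a direct three dimensional computation: instead, it rotates so that both caps lie in the slab $\Sigma_3(N_1A^{-1})=\{|\xi^{(3)}|\le N_1A^{-1}\}$, freezes the third frequency component, proves a two dimensional estimate \eqref{3d_est_ess} on each slice with constant $K=(N_1/(N_3A))^{1/2}L_1^{1/2}L_2^{1/2}$ (by the argument of Proposition~\ref{BSE_HHL}, whose details are omitted), and then recovers the extra factor $A^{-1/2}N_1^{1/2}$ by Minkowski's inequality and Cauchy--Schwarz in $\xi^{(3)}$ over the slab. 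You instead run the full 3D Cauchy--Schwarz and change of variables $(\tau_1,\tau_2,\widetilde\xi_1,\widetilde\xi_2)\mapsto(c_1,c_2,\mu,w,z)$ with a two dimensional transverse variable $z=(\widetilde\xi_2^{(2)},\widetilde\xi_2^{(3)})$ of range $(N_2/A)^2$ and Jacobian $|\widetilde\xi_1^{(1)}+\widetilde\xi_2^{(1)}|\sim N_3$; this gives $\bigl(L_1L_2(N_2/A)^2N_3^{-1}\bigr)^{1/2}$, which is exactly the product of the paper's two factors. The paper's foliation has the advantage of being reusable verbatim for Propositions~\ref{BSE_HHL_2_3d} and~\ref{thm2.6_3d} (where a genuinely 2D tool, the Loomis--Whitney inequality, is needed on the slices); your direct computation is arguably more self-contained for the present proposition, since the paper never writes out the slice estimate it invokes. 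Your treatment of the complementary regime $A\lesssim N_1/N_3$ via Bernstein for $P_{N_3}$ and the $L^4_tL^3_x$ Strichartz norm is also fine and correctly produces the threshold $N_3^{1/2}L_1^{1/2}L_2^{1/2}$.

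Two small points to fix in the write-up. First, the intermediate claim that ``Cauchy--Schwarz in $(\tau_2,\xi_2)$'' reduces matters to a slice volume $|E(\tau_1,\xi_1)|\lesssim N_1^2L_1L_2N_3^{-1}A^{-2}$ is not consistent with what you actually do: with $(\tau_1,\xi_1)$ fixed, the constraint $|\tau_1+|\xi_1|^2|\sim L_1$ does not restrict $(\tau_2,\xi_2)$, so no factor $L_1$ can come from such a slice. What your final step really proves is the bound $\int_{\Omega}|f(\tau_1+\tau_2,\xi_1+\xi_2)|^2\,d\tau_1d\tau_2d\xi_1d\xi_2\lesssim N_1^2L_1L_2N_3^{-1}A^{-2}\|f\|_{L^2}^2$ after Cauchy--Schwarz in \emph{all} variables, exactly as in \eqref{BSE_pf_2}--\eqref{BSE_pf_4}; state it that way. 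Second, the Bernstein step in dimension three that yields $N_3^{1/2}$ is $L^{3/2}\to L^2$ (exponent $3(\tfrac23-\tfrac12)=\tfrac12$), not $L^{4/3}\to L^2$, which would give $N_3^{3/4}$; pair it with H\"older $L^3\cdot L^3\to L^{3/2}$ and the admissible pair $(4,3)$ as you indicate.
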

\begin{prop}\label{BSE_HHL_2_3d}
Let $\sigma \in \R\backslash \{0,\pm 1\}$.
Let $N_1$, $N_2$, $N_3$, $L_1$, $L_2$, $L_3$, $A\in 2^{\N_0}$, and
$j_1$, $j_2\in \Omega_A$.
We assume $L_{\textnormal{max}} \ll N_{\max}^2$, $A\ge 64$, and $\alpha (j_1,j_2)\lesssim A^{-1}$. 
Then,  
we have the following estimate:
\begin{align}
\begin{split}
&\|R_{j_1}^AQ_{L_1}^{-1}P_{N_1}(R_{j_2}^AQ_{L_2}^{-1}P_{N_2}u_2\cdot Q_{L_3}^{-\sigma}P_{N_3}u_3)\|_{L^2_{tx}(\R\times \R^3)}\\
&\hspace{3ex}
\lesssim A^{-1}N_{\max}^{\frac{1}{2}}
L_2^{\frac{1}{2}}L_3^{\frac{1}{2}}
\|R_{j_2}^AQ_{L_2}^{-1}P_{N_2}u_2\|_{L^2_{tx}(\R\times \R^3)}
\|Q_{L_3}^{-\sigma}P_{N_3}u_3\|_{L^2_{tx}(\R\times \R^3)}.
\end{split}\label{ABSE2_3d}
%\begin{split}
%&\|R_{j_2}^AQ_{L_2}^1P_{N_2}(R_{j_1}^AQ_{L_1}^{1}P_{N_1}u_1\cdot Q_{L_3}^{-\sigma}P_{N_3}u_3)\|_{L^2_{tx}(\R\times \R^3)}\\
%&\hspace{3ex}
%\lesssim A^{-1}N_{\max}^{\frac{1}{2}}
%L_1^{\frac{1}{2}}L_3^{\frac{1}{2}}
%\|R_{j_1}^AQ_{L_1}^{1}P_{N_1}u_1\|_{L^2_{tx}(\R\times \R^3)}
%\|Q_{L_3}^{-\sigma}P_{N_3}u_3\|_{L^2_{tx}(\R\times \R^3)}.
%\end{split}\label{ABSE3_3d}
\end{align}
\end{prop}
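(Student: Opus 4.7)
The plan is to follow the strategy of Proposition \ref{BSE_HHL_2}, adapting only the dimensional bookkeeping: the 2D angular sector of arc length $\sim A^{-1}$ becomes a spherical cap of area $\sim A^{-2}$, and the polar Jacobian $|\xi_2|$ is replaced by the spherical Jacobian $|\xi_2|^2$. By Lemma \ref{modul_est_2} and $L_{\max}\ll N_{\max}^2$, I may assume $N_3\lesssim N_1\sim N_2$, so that $N_{\max}\sim N_1$. Applying Plancherel and duality as in Remark \ref{duality} followed by the Cauchy--Schwarz inequality, the desired estimate reduces to
\[
\sup_{(\tau_1,\xi_1)\in \supp \psi_{N_1,L_1,j_1}} |E(\tau_1,\xi_1)| \lesssim A^{-2}\, N_{\max}\, L_2 L_3,
\]
where $E(\tau_1,\xi_1)$ consists of $(\tau_2,\xi_2)$ compatible with the support conditions of the remaining factors. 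Since $|E|\lesssim \min\{L_2,L_3\}\,|\widetilde{E}|$ for the spatial projection $\widetilde{E}$ onto the $\xi_2$-variable, it suffices to prove $|\widetilde{E}|\lesssim A^{-2}N_1\max\{L_2,L_3\}$.

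For $\xi_2 \in \widetilde{E}(\tau_1,\xi_1)$, the support conditions force $\Phi := -|\xi_1|^2+|\xi_2|^2+\sigma|\xi_1+\xi_2|^2$ to lie in an interval of length $O(\max\{L_2,L_3\})$ with center depending only on $(\tau_1,\xi_1)$. Writing $\xi_2 = r\omega$ with $\omega \in S^2$ and completing the square in $r$ (valid since $\sigma \ne -1$), I arrive at
\[
((1+\sigma)r+\sigma|\xi_1|\cos\angle(\xi_1,\omega))^2 = (1-\sigma^2\sin^2\angle(\xi_1,\omega))|\xi_1|^2 - (1+\sigma)(\tau_1+|\xi_1|^2) + O(\max\{L_2,L_3\}).
\]
The hypothesis $\alpha(j_1,j_2)\lesssim A^{-1}$ yields $|\sigma\sin\angle(\xi_1,\omega)|\lesssim A^{-1}\ll 1$, so the right-hand side is positive and of size $\sim N_1^2$; Lemma \ref{element1} then confines $r$ to a set $\Omega(\omega)$ of measure $\lesssim \max\{L_2,L_3\}/N_1$ for each fixed $\omega$.

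Since $\omega$ is restricted to the spherical cap $\omega_A^{j_2}$ of area $\sim A^{-2}$ and $r\sim N_1$ contributes a Jacobian factor $r^2\sim N_1^2$, integration in spherical coordinates gives
\[
|\widetilde{E}(\tau_1,\xi_1)| \lesssim \int_{\omega\in\omega_A^{j_2}}\int_{r\in\Omega(\omega)} r^2\,dr\,d\sigma_{S^2}(\omega) \lesssim A^{-2}\,N_1\max\{L_2,L_3\},
\]
which closes the argument after combining with $|E|\lesssim \min\{L_2,L_3\}|\widetilde{E}|$ and taking square roots. The only step requiring care is verifying the non-degeneracy of the quadratic in $r$, i.e.\ that the right-hand side of the displayed equation above has size $N_1^2$; this follows from $\sigma\ne -1$, $|\sigma\sin\angle|\ll 1$, and $L_{\max}\ll N_1^2$, so no new difficulty appears compared with the 2D case.
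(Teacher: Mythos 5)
Your argument is correct, and the final bookkeeping checks out: $|\widetilde{E}|\lesssim A^{-2}N_1\max\{L_2,L_3\}$ combined with $|E|\lesssim \min\{L_2,L_3\}\,|\widetilde{E}|$ gives $|E|^{1/2}\lesssim A^{-1}N_1^{1/2}(L_2L_3)^{1/2}$, which is the claimed constant since $N_{\max}\sim N_1$ in the regime $N_3\lesssim N_1\sim N_2$ forced by Lemma \ref{modul_est_2}. However, your route differs from the paper's. The paper does not redo the measure computation in three dimensions: it first places $\supp f_1$, $\supp f_2$ in a slab $\{|\xi^{(3)}|\le N_1A^{-1}\}$ after a rotation, reduces the 3D bound to a uniform 2D fiber estimate \eqref{3d_est_ess} via Minkowski, Young, and Cauchy--Schwarz in $\xi^{(3)}$ (which costs the slab thickness $\|\ee_{\{|\xi^{(3)}|\lesssim N_1A^{-1}\}}\|_{L^2}\sim A^{-1/2}N_1^{1/2}$), and then proves the fiber estimate with constant $K=A^{-1/2}L_2^{1/2}L_3^{1/2}$ by the two-dimensional argument of Proposition \ref{BSE_HHL_2}; the product $A^{-1/2}N_1^{1/2}\cdot K$ recovers the same constant you obtain. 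Your direct approach --- Cauchy--Schwarz, the algebraic identity for $\Phi$, Lemma \ref{element1} in the radial variable, and integration in spherical coordinates with cap measure $\sim A^{-2}$ and Jacobian $r^2\sim N_1^2$ --- is more self-contained for this particular proposition; the paper's slicing is chosen because it handles Propositions \ref{BSE_HHL_3d}--\ref{thm2.6_3d} uniformly, and in particular because Proposition \ref{thm2.6_3d} needs the nonlinear Loomis--Whitney inequality, which is only available for surfaces in $\R^3$, so a reduction to 2D fibers is unavoidable there.

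One small point to make explicit: the inequality $|\sigma\sin\angle(\xi_1,\omega)|\ll 1$, which you need so that $(1-\sigma^2\sin^2\angle)|\xi_1|^2\sim N_1^2$ and Lemma \ref{element1} applies with $N\sim N_1$, requires $A\gg_{\sigma}1$ rather than merely $A\ge 64$. For the remaining finitely many (depending on $\sigma$) dyadic values of $A$ one should fall back on the bilinear Strichartz estimate (Corollary \ref{Bo_Stri} with $p=q=4$, or Proposition \ref{L2be}), exactly as the paper does in the two-dimensional case when $A\sim 1$; this only costs a constant depending on $\sigma$, which is permitted.
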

\begin{prop}\label{thm2.6_3d}
Let $\sigma \in \R\backslash \{0,\pm 1\}$.
Let $N_1$, $N_2$, $N_3$, $L_1$, $L_2$, $L_3$, $A\in 2^{\N_0}$, 
and $j_1$, $j_2\in \Omega_A$. 
We assume $L_{\textnormal{max}} \ll N_{\max}^2$, 
$A\ge 64$, and 
$\alpha (j_1,j_2)\sim A^{-1}$. 
Then the following estimate holds:
\begin{equation}
\begin{split}
&\|Q_{L_3}^{\sigma} P_{N_3}(R_{j_1}^A Q_{L_1}^{1}P_{N_1}u_{1}\cdot 
R_{j_2}^A Q_{L_2}^{-1}P_{N_2}u_{2})\|_{L^{2}_{tx}(\R\times \R^3)} \\
& \quad
\lesssim N_{\max}^{-\frac{1}{2}}L_1^{\frac{1}{2}}L_2^{\frac{1}{2}} L_3^{\frac{1}{2}} 
\|R_{j_1}^A Q_{L_1}^{1}P_{N_1}u_{1}\|_{L^2_{tx}(\R\times \R^3)} 
\|R_{j_2}^A Q_{L_2}^{-1}P_{N_2}u_{2}\|_{L^2_{tx}(\R\times \R^3)} .
\end{split}
%\label{0609_3d}
\notag
\end{equation}
\end{prop}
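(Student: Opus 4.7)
The plan is to adapt the two-case strategy used in the proof of Proposition~\ref{thm2.6} to the three-dimensional setting. First, by Lemma~\ref{modul_est_2} together with $L_{\max} \ll N_{\max}^2$, I reduce to the case $N_3 \lesssim N_1 \sim N_2$, and then split the analysis at the threshold $\Lambda := A^{-2} N_1^3 N_3^{-1}$, handling the high modulation case $L_{\max} \geq \Lambda$ and the low modulation case $L_{\max} \leq \Lambda$ separately.

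In the high modulation regime, when $L_{\max} = L_3$ I would apply Proposition~\ref{BSE_HHL_3d} to produce the prefactor $N_1 N_3^{-1/2} A^{-1} (L_1 L_2)^{1/2}$, and then use $L_3 \geq \Lambda$ to replace $N_1 N_3^{-1/2} A^{-1}$ by $N_{\max}^{-1/2} L_3^{1/2}$. When instead $L_{\max} \in \{L_1, L_2\}$, I would pass to the adjoint form by duality and invoke Proposition~\ref{BSE_HHL_2_3d}, whose bound $A^{-1} N_{\max}^{1/2}$ can similarly be exchanged against $N_{\max}^{-1/2} L_{\max}^{1/2}$ via the modulation lower bound (using $N_3 \leq N_1$ so that $\Lambda \geq N_1^2 / A^2$). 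This step runs parallel to its 2D counterpart, with only the exponents updated to reflect the 3D versions of the angular bilinear estimates.

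In the low modulation regime, I would apply Plancherel, duality, and the scaling $(\tau, \xi) \mapsto (N_1^2 \tau, N_1 \xi)$ to reduce the estimate to a convolution bound
\[
\|g_1|_{S_1} \ast g_2|_{S_2}\|_{L^2(S_3)} \lesssim a^{-1/2} \|g_1\|_{L^2(S_1)} \|g_2\|_{L^2(S_2)}
\]
with $a \sim \langle \sigma \rangle^{-1} A^{-1}$, on three $3$-dimensional paraboloidal surfaces in $\R^4$ of the form $\tau + |\xi|^2 = \widetilde{c}_1$, $\tau - |\xi|^2 = \widetilde{c}_2$, $\tau - \sigma |\xi|^2 = \widetilde{c}_3$, localized to angular caps of size $A^{-1}$ and then to cubes of diameter $\lesssim A^{-1}$. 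This requires the three-dimensional analogue of the nonlinear Loomis-Whitney inequality of Proposition~\ref{prop2.7} (for three transversal $3$-surfaces in $\R^4$). The crux of the argument is a transversality bound obtained from the determinant of the Gram-type matrix built from the unit normals $n_1 \propto (1, 2\xi_1)$, $n_2 \propto (1, -2\xi_2)$, $n_3 \propto (-1, 2\sigma(\xi_1+\xi_2))$, exploiting both $\sigma \neq 0, \pm 1$ and the angular separation $\alpha(j_1, j_2) \sim A^{-1}$. Undoing the scaling then supplies the final $N_{\max}^{-1/2}$ factor.

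The main obstacle is precisely this transversality verification in the subregime $N_3 \ll N_1 \sim N_2$: there the spatial components of the three normals are comparable in size but nearly parallel, so the required $A^{-1}$ gain must be extracted delicately from the $4 \times 3$ structure of the normal matrix together with the $\sigma$-mismatch. A secondary difficulty is the careful decomposition into pieces of diameter $\lesssim A^{-1}$ needed to verify the H\"older regularity and convexity hypotheses of the Loomis-Whitney inequality in $\R^4$, so that Proposition~\ref{prop2.7} (or its 3-surface analogue) can be applied on each piece and then summed without loss.
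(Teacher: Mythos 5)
Your reduction to $N_3\lesssim N_1\sim N_2$ and your treatment of the high-modulation regime $L_{\max}\ge A^{-2}N_1^3N_3^{-1}$ are correct: Proposition~\ref{BSE_HHL_3d} (for $L_{\max}=L_3$) and duality plus Proposition~\ref{BSE_HHL_2_3d} (for $L_{\max}=L_1$ or $L_2$, using $N_3\le N_1$ so that the threshold dominates $A^{-2}N_1^2$) do convert the angular bilinear bounds into the stated estimate, exactly as in the two-dimensional case.

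The low-modulation regime, however, contains a genuine gap, and it is precisely the point you flag as "the main obstacle". Proposition~\ref{prop2.7} is the Bejenaru--Herr--Tataru estimate for \emph{two-dimensional} hypersurfaces in $\R^3$; there is no off-the-shelf "three transversal $3$-surfaces in $\R^4$" version with constant $a^{-1/2}$ depending only on a transversality parameter. With three normals in $\R^4$ the matrix $N(\lambda_1,\lambda_2,\lambda_3)$ is $4\times 3$, the fibers $S_1\cap(z-S_2)$ are two-dimensional, and the correct higher-dimensional convolution estimates (as in \cite{BH11}) require curvature/shape hypotheses beyond transversality and carry different constants. Moreover, even if one grants your claimed bound $\|g_1|_{S_1}*g_2|_{S_2}\|_{L^2(S_3)}\lesssim A^{1/2}\|g_1\|_{L^2}\|g_2\|_{L^2}$, the rescaling supplies only $N_{\max}^{-1/2}$, so you would end with $A^{1/2}N_{\max}^{-1/2}(L_1L_2L_3)^{1/2}$ --- an extra factor $A^{1/2}$ over the target. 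This loss is fatal downstream: in the proof of Proposition~\ref{key_be} for $d=3$ one sums dyadically over $64\le A\le M$ with $M=N_1^{3/2}N_3^{-s}$, and an $A^{1/2}$ per scale produces $M^{1/2}$ instead of the harmless $\log M$ the paper uses.

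The paper avoids a four-dimensional Loomis--Whitney argument altogether. After a rotation, the angular localization to caps of width $A^{-1}$ confines $\supp\F u_1$ and $\supp\F u_2$ to the slab $\Sigma_3(N_1A^{-1})=\{|\xi^{(3)}|\le N_1A^{-1}\}$; one then freezes $\xi^{(3)}$ and $\xi_1^{(3)}$ and proves the \emph{two-dimensional} sliced estimate \eqref{3d_est_ess} with $K=A^{1/2}N_{\max}^{-1}L_1^{1/2}L_2^{1/2}L_3^{1/2}$ by repeating the proof of Proposition~\ref{thm2.6} (including the two-dimensional Proposition~\ref{prop2.7}). Recombining the slices via Minkowski, Young, and Cauchy--Schwarz over the slab of width $N_1A^{-1}$ yields the extra factor $A^{-1/2}N_1^{1/2}$, which cancels the $A^{1/2}$ in $K$ and gives exactly $N_{\max}^{-1/2}L_1^{1/2}L_2^{1/2}L_3^{1/2}$. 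This dimensional reduction is the missing mechanism in your argument: the gain that removes the $A^{1/2}$ comes from the thinness of the slab, not from the surface-convolution estimate itself. If you wish to argue directly in $\R^4$ you would instead need the curved-hypersurface convolution estimates of \cite{BH11} and a separate verification that their constant beats $a^{-1/2}$ by the required $A^{-1/2}$.
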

If $\supp f_i \subset {\mathfrak{D}}_{j_i}^A$ $(i=1,2)$, after applying rotation in space and suitable decomposition, we may assume that the supports of $f_1$ and $f_2$ are both contained in the following slab
\begin{equation*}
\Sigma_3 (N_1 A^{-1}) := \{ (\ta, \widetilde{\xi}, \xi^{(3)}) \in \R \cross \R^2 \cross \R \ | \ |\xi^{(3)}| \leq  N_1 A^{-1} \}.
\end{equation*}
Set
\[
\psi_{N_3, L_3}^{\sigma}(\ta,\xi)=\psi_{L_3} (\tau - \sigma |\xi|^2) \psi_{N_3}(\xi),
\] 
where $\psi$ is as in the notation at the end of Section \ref{intro}.
We claim that if
\begin{equation}
\begin{split}
& \bigg\| \psi_{N_3, L_3}^{\sigma}(\ta,\widetilde{\xi},\xi^{(3)})
\int_{\R \times \R^2}  
f_1 (\ta_1, \widetilde{\xi}_1, \xi_1^{(3)}) \, 
f_2 (\ta - \ta_1, \widetilde{\xi} - \widetilde{\xi}_1, \xi^{(3)} - \xi_1^{(3)}) d\ta_1 d \widetilde{\xi}_1 \bigg\|_{L_{\tau \widetilde{\xi}}^2(\R\times \R^2)}\\
& \qquad \qquad \qquad \qquad \qquad  \lesssim 
K\|f_1 ( \xi_1^{(3)}) \|_{L_{\tau \widetilde{\xi}}^2} 
\|f_2 (\xi^{(3)} - \xi_1^{(3)}) \|_{L_{\tau \widetilde{\xi}}^2} 
\end{split}\label{3d_est_ess}
\end{equation}
holds uniformly for $\xi^{(3)}$ and $\xi_1^{(3)}$ with $|\xi^{(3)} - \xi_1^{(3)}| \leq  N_1 A^{-1}$ and $|\xi_1^{(3)}| \leq  N_1 A^{-1}$, 
then we can obtain
\begin{equation}
\begin{split}
 \left\| \psi_{N_3, L_3}^{\sigma}(\ta,\xi) \int_{\R \times \R^3} 
f_1 (\tau_1,\xi_1) f_2 (\tau-\tau_1, \xi-\xi_1)d\tau_1d\xi_1 \right\|_{L_{\tau \xi}^2(\R\times \R^3)} & \\ 
\lesssim 
A^{-\frac{1}{2}} N_{1}^{\frac{1}{2}}K
\|f_1\|_{L^2_{\tau \xi}} & \|f_2 \|_{L^2_{\tau \xi}}.
\end{split}
%\label{bilinear-15-3d}
\notag
\end{equation}
Indeed, once  \eqref{3d_est_ess} holds, 
from Minkowski's inequality and Young's inequality, we have
\begin{align*}
&  \left\|  \psi_{N_3, L_3}^{\sigma}(\ta,\xi) \int_{\R \times \R^3}  
f_1 (\tau_1,\xi_1) f_2 (\tau-\tau_1, \xi-\xi_1)d\tau_1d\xi_1 \right\|_{L_{\tau \xi}^2} \\
= \ & \bigg\|  \psi_{N_3, L_3}^{\sigma}(\ta,\widetilde{\xi},\xi^{(3)}) \\
& \qquad \times
\int_{\R \times \R^3}  f_1 (\ta_1, \widetilde{\xi}_1, \xi_1^{(3)})  \, 
f_2 (\ta - \ta_1, \widetilde{\xi} - \widetilde{\xi}_1, \xi^{(3)} - \xi_1^{(3)}) d\ta_1 d \xi_1 \bigg\|_{L_{\tau \xi}^2}\\
\lesssim \ & \bigg\| \int_\R \Big\|  \psi_{N_3, L_3}^{\sigma}(\ta,\widetilde{\xi},\xi^{(3)})
\int_{\R \times \R^2} 
f_1 (\ta_1, \widetilde{\xi}_1, \xi_1^{(3)})   \\
& \times \, 
f_2 (\ta - \ta_1, \widetilde{\xi} - \widetilde{\xi}_1, \xi^{(3)} - \xi_1^{(3)}) d\ta_1 d \widetilde{\xi}_1 \Big\|_{L_{\tau \widetilde{\xi}}^2}  
d\xi_1^{(3)} 
\bigg\|_{L_{\xi^{(3)} }^2}\\
 \underset{\eqref{3d_est_ess}}{\lesssim} & K\left\| \int_\R \|f_1  ( \xi_1^{(3)}) \|_{L_{\tau \widetilde{\xi}}^2} 
\|f_2 (\xi^{(3)} - \xi_1^{(3)}) \|_{L_{\tau \widetilde{\xi}}^2} d\xi_1^{(3)} 
\right\|_{L_{\xi^{(3)} }^2}\\
\lesssim \ &  
K\bigg(
\sup_{\xi^{(3)}} \int_\R \|f_1  ( \xi_1^{(3)} ) \|_{L_{\tau \widetilde{\xi}}^2} 
\|f_2 (\xi^{(3)} - \xi_1^{(3)}) \|_{L_{\tau \widetilde{\xi}}^2} d\xi_1^{(3)}
\bigg) \\
&\quad \times
\|\ee_{\{|\xi^{(3)}|\lesssim N_1A^{-1}\}}\|_{L^2_{\xi^{(3)}}}\\
\lesssim \ &  A^{-\frac{1}{2}} N_1^{\frac{1}{2}} 
K\|f_1 \|_{ L_{\tau \xi}^2} 
\|f_2 \|_{ L_{\ta \xi}^2}. 
\end{align*}
Therefore, to show 
Propositions~\ref{BSE_HHL_3d}, ~\ref{BSE_HHL_2_3d}, 
~\ref{thm2.6_3d}, 
it suffices to prove (\ref{3d_est_ess}) 
for
\[
K=\left(\frac{N_1}{N_3A}\right)^{\frac{1}{2}}L_1^{\frac{1}{2}}L_2^{\frac{1}{2}},
\quad
A^{-\frac{1}{2}} L_2^{\frac{1}{2}}L_3^{\frac{1}{2}},
\quad \text{ and } \quad
A^{\frac{1}{2}}N_{\max}^{-1}L_1^{\frac{1}{2}}L_2^{\frac{1}{2}}L_3^{\frac{1}{2}},
\]
respectively. 
Since we can get these estimates by 
similar argument as the proof of 
Proposition~\ref{BSE_HHL}, ~\ref{BSE_HHL_2}, 
and ~\ref{thm2.6} (See, also \cite{HK}), 
we omit its proof. 

Now, we give the proof of the bilinear estimates. 
\begin{proof}[Proof of Proposition~\ref{key_be} for $d=3$]
Let $1>s>\frac{1}{2}$ and
\[
(s_1,s_2,s_3)\in \{(s,s,-s),(s,-s,s),(-s,s,s)\}.
\]
We prove (\ref{key_est}). 
We set
\[
u_{N_1,L_1}:=Q_{L_1}^{1}P_{N_1}U,\ 
v_{N_2,L_2}:=Q_{L_2}^{-1}P_{N_2}V,\ 
w_{N_3,L_3}:=Q_{L_3}^{-\sigma}P_{N_3}W. 
\]
Then, we have
\[
\begin{split}
&
\left|\int_{\R \times \R^3} U(t,x)V(t,x)\p_jW(t,x)dxdt\right|\\
&\lesssim \sum_{N_1,N_2,N_3\ge 1}\sum_{L_1,L_2, L_3 \ge 1} N_{3}
\left|\int_{\R \times \R^3} u_{N_1,L_1}v_{N_2,L_2}w_{N_3,L_3}dxdt\right|. 
\end{split}
\]
By the same reason for $d=2$, it suffices to show that
\begin{equation}\label{desired_est_2}
\begin{split}
&N_{3}
\left|\int_{\R \times \R^3} u_{N_1,L_1}v_{N_2,L_2}w_{N_3,L_3}dxdt\right|\\
&\lesssim
N_{\min}^{s}(L_1L_2L_3)^{c}\left(\frac{N_{\min}}{N_{\max}}\right)^{\eps}
\|u_{N_1,L_1}\|_{L^2_{tx}}\|v_{N_2,L_2}\|_{L^2_{tx}}\|w_{N_3,L_3}\|_{L^2_{tx}}
\end{split}
\end{equation}
for some $b'\in (0,\frac{1}{2})$, $c\in (0,b')$, and $\eps >0$. 

Now, we prove (\ref{desired_est_2}).\\
\kuuhaku \\
\underline{Case\ 1:\ High modulation, $\displaystyle L_{\max}\gtrsim N_{\max}^2$}

We first assume $L_3=L_{\max}$. 
By the symmetry, we can assume $N_1\le N_2$.
Then, by the Cauchy-Schwarz inequality and (\ref{L2be_est_2}), 
we have
\[
\begin{split}
&N_3\left|\int_{\R \times \R^3} u_{N_1,L_1}v_{N_2,L_2}w_{N_3,L_3}dxdt\right|\\
&\lesssim N_{3}\|P_{N_3}(u_{N_1,L_1}v_{N_2,L_2})\|_{L^2_{tx}}\|w_{N_3,L_3}\|_{L^2_{tx}}\\
&\lesssim N_{3}N_{1}^{\frac{1}{2}+4\delta}
\left(\frac{N_{1}}{N_{2}}\right)^{\frac{1}{2}- 2\delta}L_2^{c}L_3^{c}
\|u_{N_1,L_1}\|_{L^2_{tx}}\|v_{N_2,L_2}\|_{L^2_{tx}}\|w_{N_3,L_3}\|_{L^2_{tx}}\\
&\lesssim N_{3}N_{1}^{\frac{1}{2}+4\delta}
\left(\frac{N_{1}}{N_{2}}\right)^{\frac{1}{2}- 2\delta}
N_{\max}^{-2c}(L_1L_2L_3)^{c}\\
&\hspace{15ex}\times \|u_{N_1,L_1}\|_{L^2_{tx}}\|v_{N_2,L_2}\|_{L^2_{tx}}\|w_{N_3,L_3}\|_{L^2_{tx}}
,
\end{split}
\]
where $\delta = \frac{1}{2}-c$. 
If $N_3\lesssim N_1\sim N_2$, then we obtain
\[
N_{3}N_{1}^{\frac{1}{2}+4\delta}
\left(\frac{N_{1}}{N_{2}}\right)^{\frac{1}{2}- 2\delta}
N_{\max}^{-2c}
\sim N_3^{\frac{7}{2}-6c-s}N_3^s\left(\frac{N_3}{N_1}\right)^{6c-\frac{5}{2}}. 
\]
If $N_1\lesssim N_2\sim N_3$, then we obtain
\[
N_{3}N_{1}^{\frac{1}{2}+4\delta}
\left(\frac{N_{1}}{N_{2}}\right)^{\frac{1}{2}- 2\delta}
N_{\max}^{-2c}
\sim N_1^{\frac{7}{2}-6c-s}N_1^s\left(\frac{N_1}{N_3}\right)^{4c-\frac{3}{2}}.
\]
Therefore, by choosing $b'$ and $c$ as
$\max\{\frac{1}{6}(\frac{7}{2}-s),\frac{5}{12},\frac{3}{8}\}<c<b'<\frac{1}{2}$
for $s>\frac{1}{2}$, 
we get \eqref{desired_est_2}. 
The case $L_1=L_{\max}$ and $L_2=L_{\max}$ is similarly handled, 
but we use (\ref{L2be_est_20}) 
instead of  (\ref{L2be_est_2}). \\
\kuuhaku \\
\underline{Case\ 2:\ Low modulation, $\displaystyle L_{\max}\ll  N_{\max}^2$}

By Lemma \ref{modul_est_2}, 
we can assume $N_3 \lesssim N_1 \sim N_2$. 
We set
\begin{equation}
M=N_1^{\frac{3}{2}}N_3^{-s}
\label{mod3}
\end{equation}
and 
decompose $\R^4 \cross \R^4$ as follows:
\begin{equation*}
\R^4 \cross \R^4 =
\bigg(\bigcup_{\tiny{\substack{j_1,j_2\in \Omega_{M}\\ \alpha(j_1,j_2)\lesssim M^{-1}}}} 
{\mathfrak{D}}_{j_1}^{M} \cross {\mathfrak{D}}_{j_2}^{M} \bigg)
 \cup 
\bigg(\bigcup_{64 \leq A \leq M} \ \bigcup_{\tiny{\substack{j_1,j_2\in \Omega_A\\ \alpha(j_1,j_2)\sim A^{-1}}}} 
{\mathfrak{D}}_{j_1}^A \cross {\mathfrak{D}}_{j_2}^A\bigg).
\end{equation*}
We can write
\begin{align}
 &\left|\int_{\R \times \R^3} u_{N_1,L_1}v_{N_2,L_2}w_{N_3,L_3}dxdt\right| \notag\\
 &\leq \sum_{{\tiny{\substack{j_1,j_2 \in \Omega_{M} \notag\\ 
\alpha(j_1,j_2)\lesssim M^{-1}}}}} 
\left|\int_{\R \times \R^3} u_{N_1,L_1, j_1}^M v_{N_2,L_2, j_2}^M w_{N_3,L_3}dxdt\right| \notag\\ 
&\ \ \ \ \ \ \ \ \ \ \ \ + 
\sum_{64 \leq A \leq M}  \sum_{{\tiny{\substack{j_1,j_2\in \Omega_A\\ \alpha(j_1,j_2)\sim A^{-1}}}}} 
\left|\int_{\R \times \R^3} u_{N_1,L_1, j_1}^A v_{N_2,L_2, j_2}^A w_{N_3,L_3}dxdt\right| \notag\\
&=: {\rm I} + {\rm \II},
\label{I,II2}
\end{align}
where
\[
u_{N_1,L_1,j_1}^A=R^A_{j_1}u_{N_1,L_1},\ \ 
v_{N_2,L_2,j_2}^A=R^A_{j_2}v_{N_2,L_2}. 
\]
For the first term $\rm I$ in \eqref{I,II2}, 
we first assume $L_{\textnormal{max}} = L_3$. 
By \eqref{I,II2}, the H\"older inequality, (\ref{ABSE1_3d}), and \eqref{mod3},
we get
\begin{align*}
N_3 \cdot {\rm I}
% \sum_{{\tiny{\substack{A=M\\ j_1,j_2 \in \Omega_{M}\\ 
%\alpha(j_1,j_2)\lesssim M^{-1}}}}}N_3
%\left|\int_{\R \times \R^3} u_{N_1,L_1, j_1}v_{N_2,L_2, j_2}w_{N_3,L_3}dxdt\right| \\
& \lesssim  
\sum_{{\tiny{\substack{j_1,j_2 \in \Omega_{M}\\ 
\alpha(j_1,j_2)\lesssim M^{-1}}}}}
N_3\|P_{N_3} ( u_{N_1,L_1, j_1}^M v_{N_2,L_2, j_2}^M)\|_{L^{2}_{tx}}
\|w_{N_3,L_3} \|_{{L^2_{t x}}}  \\
& \lesssim 
\sum_{{\tiny{\substack{\\ j_1,j_2 \in \Omega_{M}\\ 
\alpha(j_1,j_2)\lesssim M^{-1}}}}}N_3
\frac{N_1}{N_3^{\frac{1}{2}}M} L_1^{\frac{1}{2}}L_2^{\frac{1}{2}} 
\|u_{N_1,L_1, j_1}^M\|_{L^2_{tx}}   \|v_{N_2,L_2, j_2}^M\|_{L^2_{tx}}
\|w_{N_3,L_3} \|_{{L^2_{t x}}}\\
& \lesssim  N_3^{s}\left(\frac{N_3}{N_1}\right)^{\frac{1}{2}}
  L_1^{\frac{1}{2}}L_2^{\frac{1}{2}}
 \|u_{N_1,L_1}\|_{L^2_{tx}}\|v_{N_2,L_2}\|_{L^2_{tx}}\|w_{N_3,L_3}\|_{L^2_{tx}}\\
&\lesssim N_3^{s}\left(\frac{N_3}{N_1}\right)^{\frac{1}{2}} (L_1L_2L_3)^{\frac{1}{3}} \|u_{N_1,L_1}\|_{L^2_{tx}}\|v_{N_2,L_2}\|_{L^2_{tx}}\|w_{N_3,L_3}\|_{L^2_{tx}},
\end{align*}
which shows \eqref{desired_est_2}.
If $L_{\max}=L_1$ or $L_{\max}=L_2$, 
then we use (\ref{ABSE2_3d}) instead of (\ref{ABSE1_3d}). 

For the second term $\rm \II$, by \eqref{I,II2}, Proposition \ref{thm2.6_3d}, $L_{1}L_2L_3\ll N_1^6$, and \eqref{mod3},
we get
\begin{align*}
N_3 \cdot {\rm \II}
%& \sum_{64 \leq A \leq M}  \sum_{{\tiny{\substack{j_1,j_2\in \Omega_A\\ \alpha(j_1,j_2)\sim A^{-1}}}}} 
%N_3\left|\int_{\R \times \R^3} u_{N_1,L_1, j_1}v_{N_2,L_2, j_2}w_{N_3,L_3,j_3}dxdt\right| \\
& \lesssim  
\sum_{64 \leq A \leq M}  \sum_{{\tiny{\substack{j_1,j_2\in \Omega_A\\ \alpha(j_1,j_2)\sim A^{-1}}}}} 
N_3\|Q_{L_3}^{\sigma} P_{N_3} ( u_{N_1,L_1, j_1}^A v_{N_2,L_2, j_2}^A)\|_{L^{2}_{tx}} 
\|w_{N_3,L_3} \|_{{L^2_{t x}}}\\
& \lesssim 
\sum_{64 \leq A \leq M}  \sum_{{\tiny{\substack{j_1,j_2\in \Omega_A\\ \alpha(j_1,j_2)\sim A^{-1}}}}} 
 N_3N_1^{-\frac{1}{2}} ( L_1 L_2 L_3)^\frac{1}{2} \\
&\hspace*{100pt}
\times
\|u_{N_1,L_1, j_1}^A\|_{L^2_{tx}}   \|v_{N_2,L_2, j_2}^A\|_{L^2_{tx}}
\|w_{N_3,L_3} \|_{{L^2_{t x}}}\\
&  \lesssim N_3N_1^{-\frac{1}{2}}(\log M)
( L_1 L_2 L_3)^{\frac{1}{2}} \|u_{N_1,L_1}\|_{L^2_{tx}}\|v_{N_2,L_2}\|_{L^2_{tx}}\|w_{N_3,L_3}\|_{L^2_{tx}}\\
&\lesssim N_3^{s}\left(\frac{N_3}{N_1}\right)^{1-s}( L_1 L_2 L_3)^{\frac{1}{2}-\frac{1}{12}(s-\frac{1}{2})} \\
&\qquad \times 
\|u_{N_1,L_1}\|_{L^2_{tx}}\|v_{N_2,L_2}\|_{L^2_{tx}}\|w_{N_3,L_3}\|_{L^2_{tx}},
\end{align*} 
here we used $\log M\lesssim \log N_1\lesssim N_1^{\frac{1}{2}(s-\frac{1}{2})}$ for $s>\frac{1}{2}$. 
This shows \eqref{desired_est_2}, and we hence conclude the proof of Proposition \ref{key_be} for $d=3$.
\end{proof}
%
%
%%%%%%%%%%%%%%%%%%%%%%%%%%%%%%%%%%%%%%%%%%%%%%%%%%%%%%%%%%%%%%%%%%%%%%%%%%%%%%%%%
%%%%%%%%%%%%%%%%%%%%%%%%%%%%%%%%%%%%%%%%%%%%%%%%%%%%%%%%%%%%%%%%%%%%%%%%%%%%%%%%%
%%%%%%%%%%%%%%%%%%%%%%%%%%%%%%%%%%%%  Section 5   %%%%%%%%%%%%%%%%%%%%%%%%%%%%%%%%%%
%%%%%%%%%%%%%%%%%%%%%%%%%%%%%%%%%%%%%%%%%%%%%%%%%%%%%%%%%%%%%%%%%%%%%%%%%%%%%%%%%
%%%%%%%%%%%%%%%%%%%%%%%%%%%%%%%%%%%%%%%%%%%%%%%%%%%%%%%%%%%%%%%%%%%%%%%%%%%%%%%%%
%
\section{The lack of the third differentiability of the flow map}
For $\sigma \in \R\backslash\{0\}$, 
we define the Duhamel integral operator $\It_{\sigma}$ as
\begin{equation}
\It_{\sigma}(f)(t)
:=\int_0^te^{i(t-t')\sigma \partial_x^2} f(t') dt'.
\label{iter0}
\end{equation}
We also define the iteration terms for \eqref{NLS_sys} as follows:
\begin{equation}
\begin{gathered}
u^{(1)}(t):=e^{it\alpha \partial_x^2}u_0,\quad
v^{(1)}(t):=e^{it\beta \partial_x^2}v_0, \quad
w^{(1)}(t):=e^{it\gamma \partial_x^2}w_0,
\\
u^{(k)}(t) := i \sum_{\substack{k_1,k_2 \in \N \\ k_1+k_2=k}}
\It_{\alpha}(\dx w^{(k_1)} \cdot v^{(k_2)})(t),\\
v^{(k)}(t) := i \sum_{\substack{k_1,k_2 \in \N \\ k_1+k_2=k}}
\It_{\beta}(\dx \overline{w^{(k_1)}} \cdot u^{(k_2)})(t),\\
w^{(k)}(t) := -i \sum_{\substack{k_1,k_2 \in \N \\ k_1+k_2=k}}
\It_{\gamma}( \dx (u^{(k_1)} \overline{v^{(k_2)}}))(t)
\end{gathered}
\label{iter}
\end{equation}
for any integer $k$ greater than $1$.

The following proposition implies Theorem~\ref{NotC3}.
\begin{prop}\label{notC3}
Let $d=1$, $0<T\ll 1$, and let $\mu$ be as in \eqref{coeff}.
Assume $\mu >0$ and $s<0$. 
For every $C>0$, there exist $u_0$, $v_0$, $w_0\in H^{s}(\R)$ such that
\begin{equation}\label{flow_map_estimate}
\sup_{0\le t\le T}\|u^{(3)}(t)\|_{H^s}
\ge C \big( \|u_0\|_{H^s}\|w_0\|_{H^s}^2 + \| u_0 \|_{H^s} \| v_0 \|_{H^s}^2 \big).
\end{equation}
\end{prop}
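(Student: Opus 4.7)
The plan is to realize the failure of $C^3$-regularity via a Bourgain-type third-iteration obstruction (see Section 6 of \cite{Bo97}): for each large $N$ I would exhibit initial data $(u_0, v_0, w_0) \in \h^s$ of unit $H^s$ norm such that $\sup_{t \in [0,T]} \|u^{(3)}(t)\|_{H^s} \gtrsim T N^{-2s}$, which diverges as $N \to \infty$ since $s < 0$.

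First, using \eqref{iter} I would decompose
\[
u^{(3)}(t) = i\It_\alpha(\dx w^{(2)} \cdot v^{(1)})(t) + i\It_\alpha(\dx w^{(1)} \cdot v^{(2)})(t) =: u^{(3)}_A(t) + u^{(3)}_B(t),
\]
substitute the Duhamel formulas for $w^{(2)}$ and $v^{(2)}$, and compute the spatial Fourier transform. This yields
\[
\F_x u^{(3)}_A(t, \xi) = -e^{-it\alpha \xi^2} \iint \xi_0^2\, \hat{u}_0(\eta)\, \overline{\hat{v}_0(\eta-\xi_0)}\, \hat{v}_0(\xi - \xi_0)\, I_t(\Phi_{\textnormal{out}}, \Phi_{\textnormal{in}})\, d\eta\, d\xi_0,
\]
where $\Phi_{\textnormal{out}} = \alpha \xi^2 - \gamma \xi_0^2 - \beta(\xi - \xi_0)^2$, $\Phi_{\textnormal{in}} = \gamma \xi_0^2 - \alpha \eta^2 + \beta(\xi_0 - \eta)^2$, and
\[
I_t(A, B) = \int_0^t e^{isA}\,\frac{e^{isB}-1}{iB}\,ds = \frac{1}{iB}\left(\frac{e^{it(A+B)}-1}{i(A+B)} - \frac{e^{itA}-1}{iA}\right).
\]
An analogous expression holds for $u^{(3)}_B$, with $\hat u_0$ paired with two factors of $\hat w_0$.

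Next, I would take $w_0 \equiv 0$ so that $u^{(3)}_B \equiv 0$ and only the first term of \eqref{flow_map_estimate} survives, and choose $\hat u_0, \hat v_0$ to be positive indicator functions of unit intervals centered at $\xi = N$ and $\xi = (\alpha/\beta) N$ respectively, each scaled by $N^{-s}$ so that $\|u_0\|_{H^s} \sim \|v_0\|_{H^s} \sim 1$ (this is nondegenerate since $\mu > 0$ forces $\alpha \neq \beta$, as noted after \eqref{coeff}). On the support of the integrand, $\xi \approx \eta \approx N$ and $\xi_0 \approx ((\beta-\alpha)/\beta)N$, and a direct computation gives
\[
\Phi_{\textnormal{in}} = \beta^{-2}(\beta-\alpha)\,\mu\, N^2 + O(N),
\]
which has definite sign and is of size $N^2$ by $\mu > 0$. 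The algebraic identity
\[
\Phi_{\textnormal{out}} + \Phi_{\textnormal{in}} = (\xi-\eta)\bigl[\alpha(\xi+\eta) + \beta(2\xi_0 - \xi - \eta)\bigr]
\]
shows that $\Phi_{\textnormal{out}} + \Phi_{\textnormal{in}} = O(1)$ on this configuration, so the dominant part of $I_T$ is $T/(i\Phi_{\textnormal{in}})$ while the other piece is $O(N^{-4})$.

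Finally, combining this leading term with the factor $\xi_0^2 \sim N^2$ and the three nonnegative amplitudes $\sim N^{-s}$ gives the pointwise bound $|\F_x u^{(3)}_A(T, \xi)| \gtrsim T N^{-3s}$ on a unit frequency interval near $\xi \sim N$, hence $\|u^{(3)}_A(T)\|_{H^s} \gtrsim T N^{-2s}$, which exceeds any prescribed $C$ once $N$ is taken large enough. The main obstacle is verifying that the integration over the small $(\eta, \xi_0)$-box does not destroy this pointwise size through phase oscillation. This will be handled by three observations: (i) the sign of $\Phi_{\textnormal{in}}$ is constant on the box, so the leading imaginary direction $-iT/\Phi_{\textnormal{in}}$ is constant; (ii) its magnitude varies only by $O(1/N)$ across the box; and (iii) the correction from replacing $(e^{iTC}-1)/(iC)$ by $T$ in the leading term is of relative size $O(T)$, negligible for $0 < T \ll 1$. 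Hence the integrand has essentially constant complex direction and realizes its full magnitude.
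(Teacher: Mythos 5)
Your argument is correct and follows essentially the same route as the paper's proof: concentrate the data on unit frequency intervals chosen so that the intermediate phase is $\sim \mu N^2$ with fixed sign while the combined phase is $O(1)$, extract the leading term $T/(i\Phi_{\mathrm{in}})$ from the iterated time integral, and conclude $\sup_{t\le T}\|u^{(3)}(t)\|_{H^s}\gtrsim TN^{-2s}\to\infty$ as $N\to\infty$ since $s<0$. The only difference is the symmetric choice $w_0=0$, which routes the cubic interaction through $w^{(2)}$ so that it is the second term $\|u_0\|_{H^s}\|v_0\|_{H^s}^2$ of \eqref{flow_map_estimate} that survives (not the first, as you wrote), whereas the paper takes $v_0=0$ and runs the chain through $v^{(2)}$; both resonance computations reduce to the same factorization producing $\mu$, using that $\mu>0$ rules out the degenerate coincidences of coefficients.
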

\begin{proof}
We set $v_0=0$.
Then, it follows from \eqref{iter} that
\begin{gather}
\begin{split}
&v^{(1)} = u^{(2)} = w^{(2)}=v^{(3)}=0, \\
&v^{(2)} = i \It_{\beta} (\dx w^{(1)} \cdot u^{(1)}),  \\
&u^{(3)} = i \It_{\alpha} (\dx w^{(1)} \cdot v^{(2)}), \quad
w^{(3)} = -i \It_{\gamma} (\dx (u^{(1)} \overline{v^{(2)}})).
\end{split}
\label{iter2}
\end{gather}
For $\xi$, $\eta \in \R$, we set
\[
\Phi (\xi,\eta):=\alpha \xi^2-\beta(\xi-\eta)^2-\gamma \eta^2. 
\]
By \eqref{iter0} and \eqref{iter2},
we have
\[
\begin{split}
\widehat{v^{(2)}}(t',\eta )
&=
i\int_0^te^{-i(t'-t'')\beta\eta^2}
\int_{\R}i\eta_1\widehat{\overline{w^{(1)}}}(t'',\eta_1)\widehat{u^{(1)}}(t'',\eta-\eta_1)d\eta_1dt''\\
&=
-e^{-it'\beta \eta^2}
\int_{\R}\left(\int_0^{t'}e^{-it''\Phi (\eta -\eta_1,-\eta_1)}dt''\right)
\eta_1\widehat{u_0}(\eta-\eta_1)\overline{\widehat{w_0}(-\eta_1)}d\eta_1\\
&=
e^{-it'\beta \eta^2}
\int_{\R}\left(\int_0^{t'}e^{-it''\Phi (\eta +\xi_2,\xi_2)}dt''\right)
\xi_2\widehat{u_0}(\eta+\xi_2)\overline{\widehat{w_0}(\xi_2)}d\xi_2.
\end{split}
\]
Moreover,
we obtain
\begin{equation}
\begin{split}
\widehat{u^{(3)}}(t,\xi)
&=
i\int_0^t e^{-i(t-t')\alpha \xi^2}
\int_{\R}i\xi_1\widehat{w^{(1)}}(t',\xi_1)\widehat{v^{(2)}}(t',\xi-\xi_1)d\xi_1dt'\\
&=
-e^{-it\alpha \xi^2}
\int_{\R}\int_{\R}\left(\int_0^te^{it'\Phi (\xi,\xi_1)}
\int_0^{t'}e^{-it''\Phi(\xi-\xi_1+\xi_2,\xi_2)}dt''dt'\right)\\
&\hspace{20ex}\times
\xi_1\xi_2\widehat{w_0}(\xi_1)\overline{\widehat{w_0}(\xi_2)}
\widehat{u_0}(\xi-\xi_1+\xi_2)d\xi_1d\xi_2.
\end{split}
\label{iter30}
\end{equation}

Now, we give the initial data $u_0$ and $w_0$ as
\begin{equation}
\widehat{u_0}(\xi)= N^{-s}\ee_{[kN,kN+1]}(\xi),\ \ 
\widehat{w_0}(\xi)= N^{-s}\ee_{[N,N+1]}(\xi),
\label{init0}
\end{equation}
where $k$ is a constant will be chosen later.
Then, we have
\begin{equation}
\| u_0 \|_{H^s} \sim \| w_0 \|_{H^s} \sim 1.
\label{iniHs}
\end{equation}

By choosing $k$ appropriately,
we have
\begin{equation}\label{modul}
|\Phi (\xi-\xi_1+\xi_2, \xi_2)|\sim N^2, \quad
|\Phi (\xi, \xi_1)|\sim N^2, \quad
|\Psi (\xi,\xi_1,\xi_2)| \lesssim 1
\end{equation}
for $\xi_1, \xi_2 \in [N,N+1]$ and $\xi-\xi_1+\xi_2 \in [kN,kN+1]$,
where
\begin{equation}
\Psi (\xi,\xi_1,\xi_2) := \Phi (\xi, \xi_1)-\Phi (\xi-\xi_1+\xi_2, \xi_2).
\label{Psi2}
\end{equation}
Here, we prove \eqref{flow_map_estimate} by assuming \eqref{modul}.
It follows from \eqref{iter30} that
\[
\begin{split}
|\widehat{u_3}(t,\xi )|
&=\bigg|\int_{\R}\int_{\R}
\int_0^t\frac{e^{it'\Psi (\xi,\xi_1,\xi_2)}-e^{it'\Phi(\xi,\xi_1)}}{\Phi(\xi-\xi_1+\xi_2,\xi_2)}dt'
\xi_1\xi_2 \\
&\hspace*{100pt}
\times
\widehat{w_0}(\xi_1)\overline{\widehat{w_0}(\xi_2)}
\widehat{u_0}(\xi-\xi_1+\xi_2)d\xi_1d\xi_2\bigg|\\
&\gtrsim 
\bigg|\int_{\R}\int_{\R}
\left(\int_0^te^{it'\Psi (\xi,\xi_1,\xi_2)}dt'\right)
\frac{\xi_1\xi_2}{\Phi(\xi-\xi_1+\xi_2,\xi_2)} \\
&\hspace*{100pt}
\times
\widehat{w_0}(\xi_1)\overline{\widehat{w_0}(\xi_2)}
\widehat{u_0}(\xi-\xi_1+\xi_2)d\xi_1d\xi_2 \bigg|\\
&\ \ \ \ \ \ \ 
-\bigg|\int_{\R}\int_{\R}\frac{e^{it\Phi(\xi,\xi_1)}-1}{\Phi(\xi,\xi_1)\Phi(\xi-\xi_1+\xi_2,\xi_2)}\xi_1\xi_2
\\
&\hspace*{100pt}
\times
\widehat{w_0}(\xi_1)\overline{\widehat{w_0}(\xi_2)}
\widehat{u_0}(\xi-\xi_1+\xi_2)d\xi_1d\xi_2\bigg|\\
&=: {\rm I}- {\rm \II}.
\end{split}
\]
By \eqref{init0},
we have
\[
\left|\int_{\R}\int_{\R}\xi_1\xi_2\widehat{w_0}(\xi_1)\overline{\widehat{w_0}(\xi_2)}
\widehat{u_0}(\xi-\xi_1+\xi_2)d\xi_1d\xi_2\right|
\gtrsim N^{2-3s} \ee_{[kN,kN+\frac{1}{2}]}(\xi).
\]
Hence,
\eqref{modul} yields that
\[
\| {\rm I} \|_{H^s}
\gtrsim tN^{-2} N^{2-3s} \| \langle \xi \rangle^s \ee_{[kN,kN+\frac{1}{2}]}(\xi)\|_{L^2}
\sim tN^{-2s}
\]
for $0<t \ll 1$.
On the other hand, by \eqref{modul}, Young's inequality, and \eqref{init0},
we have
\[
\| {\rm \II} \|_{H^s}
\le N^sN^{-2} \|\widehat{w_0}*\widehat{w_0}*\widehat{u_0}\|_{L^2}
\lesssim N^sN^{-2}\|\widehat{w_0}\|_{L^1}^2\|u_0\|_{L^2}
\sim N^{-2s-2}.
\]
Therefore, we obtain
\begin{equation}
\|u_3(t)\|_{H^s}
\gtrsim tN^{-2s}-N^{-2s-2}
\sim tN^{-2s}
\label{u3Hs}
\end{equation}
provided that $t \gg N^{-2}$.
Then, \eqref{flow_map_estimate} follows from \eqref{iniHs}, \eqref{u3Hs},
and $s<0$.

It remains to prove \eqref{modul} by choosing $k$ appropriately.
If $\xi_1=N+\eps_1$, $\xi_2=N+\eps_2$, and 
$\xi =kN+\eps$ for $0\le \eps, \eps_1, \eps_2\lesssim 1$,
then \eqref{Psi2} yields that
\begin{equation}
\begin{split}
\Psi (\xi,\xi_1,\xi_2)&=
\alpha\xi^2-\gamma \xi_1^2-\alpha (\xi-\xi_1+\xi_2)^2+\gamma \xi_2^2\\
&=(\xi_1-\xi_2)\left\{2\alpha \xi -(\alpha+\gamma)\xi_1+(\alpha -\gamma)\xi_2\right\}\\
&=(\eps_1-\eps_2)\left\{2(\alpha k-\gamma)N
+2\alpha \eps-(\alpha+\gamma)\eps_1+(\alpha -\gamma)\eps_2\right\}
\end{split}
\label{Psi4}
\end{equation}
Therefore, if we choose $k$ as
\[
k=\frac{\gamma}{\alpha},
\]
then the third condition (\ref{modul}) holds.
Furthermore, we have
\begin{equation}
\begin{split}
|\Phi (\xi-\xi_1+\xi_2,\xi_2)|
&=|\alpha (\xi-\xi_1+\xi_2)^2-\beta(\xi-\xi_1)^2-\gamma \xi_2^2|\\
&\sim |\alpha k^2-\beta (k-1)^2-\gamma|N^2\\
&=\frac{|(\alpha -\gamma )\mu|}{\alpha^2} N^2,
\end{split}
\label{Psi5}
\end{equation}
where $\mu$ is as in \eqref{coeff}.
Since the assumption $\mu>0$ implies $\alpha \neq \gamma$,
\eqref{Psi5} shows the first condition in \eqref{modul}.

Finally,
it follows from \eqref{Psi2}, \eqref{Psi4}, and \eqref{Psi5} that
\[
|\Phi (\xi, \xi_1)|=|\Psi (\xi,\xi_1,\xi_2)+\Phi (\xi-\xi_1+\xi_2,\xi_2)|\sim N^2.
\]
We therefore obtain \eqref{modul}.
This concludes the proof of Proposition \ref{notC3}.
\end{proof}
\section*{acknowledgements}
This work was supported by 
JSPS KAKENHI Grant Numbers JP17K14220
and JP20K14342,  
Program to Disseminate Tenure Tracking System from the Ministry of Education, Culture, Sports, Science and Technology, 
and the DFG through the CRC 1283 "Taming uncertainty and
profiting from randomness and low regularity in analysis, stochastics
and their
applications".  

\end{document}